\renewcommand\labelenumi{\textup{(\roman{enumi})}}
\renewcommand\theenumi\labelenumi
\renewcommand\labelenumii{(\alph{enumii})}
\renewcommand\theenumii\labelenumii
\renewcommand\theenumii\labelenumii
\theoremstyle{theorem} \newtheorem{theorem}{Theorem}[section]
\theoremstyle{theorem} \newtheorem{lemma}[theorem]{Lemma}
\theoremstyle{theorem} \newtheorem{proposition}[theorem]{Proposition}
\theoremstyle{theorem} \newtheorem{corollary}[theorem]{Corollary}
\theoremstyle{definition} 
\theoremstyle{remark} \newtheorem{remark}[theorem]{Remark}
\theoremstyle{definition}  \newtheorem{example}[theorem]{Example}
\DeclareMathOperator \re {Re}
\DeclareMathOperator \im {Im}
\DeclareMathOperator \essinf {essinf}
\DeclareMathOperator \tr {tr}
\newcommand{\I}{\mathds{1}}
\newcommand\fa{\qquad \text{for all \ }}
\newcommand{\cadlag}{c\`adl\`ag }
\newcommand\mc[1] {\mathcal{#1}}
\newcommand\mbb[1] {\mathds{#1}}
\newcommand{\eps}{\varepsilon}
\newcommand{\primo}{1\textsuperscript{o}}
\newcommand{\primolist}{\par\smallskip\textbf\primo\;\;}
\newcommand{\secundo}{2\textsuperscript{o}}
\newcommand{\secundolist}{\par\smallskip\textbf\secundo\;\;}
\newcommand{\tertio}{3\textsuperscript{o}}
\newcommand{\tertiolist}{\par\smallskip\textbf\tertio\;\;}
\newcommand{\quarto}{4\textsuperscript{o}}
\newcommand{\quartolist}{\par\smallskip\textbf\quarto\;\;}
\begin{document}

\title[{Upper functions for L\'evy(-type) processes}]{Upper functions for sample paths of L\'evy(-type) processes}
\author[F.~K\"{u}hn]{Franziska K\"{u}hn} 
\address[F.~K\"{u}hn]{TU Dresden, Fakult\"at Mathematik, Institut f\"{u}r Mathematische Stochastik, 01062 Dresden, Germany. E-Mail: \textnormal{franziska.kuehn1@tu-dresden.de}}

\subjclass[2020]{Primary 60G17; Secondary 60G51, 60G53, 60J76, 47G20}
\keywords{L\'evy process, Feller process, martingale problem, sample path behaviour, small-time asymptotics, upper function, maximal inequality}

\begin{abstract}
	We study the small-time asymptotics of sample paths of L\'evy processes and L\'evy-type processes. Namely, we investigate under which conditions the limit
	$$
		\limsup_{t \to 0} \frac{1}{f(t)} |X_t-X_0|
	$$ 
	is finite resp.\ infinite with probability $1$. We establish integral criteria in terms of the infinitesimal characteristics and the symbol of the process. Our results apply to a wide class of processes, including solutions to L\'evy-driven SDEs and stable-like processes. For the particular case of L\'evy processes, we recover and extend earlier results from the literature. Moreover, we present a new maximal inequality for L\'evy-type processes, which is of independent interest.
\end{abstract}
\maketitle

\section{Introduction} \label{intro}

A mapping $f:[0,1] \to [0,\infty)$ is called an upper function for a stochastic process $(X_t)_{t \geq 0}$ if
\begin{equation*}
	\limsup_{t \to 0} \frac{1}{f(t)} |X_t-X_0| \leq 1 \quad \text{almost surely},
\end{equation*}
i.e.\ a typical sample path $t \mapsto X_t(\omega)$ grows asymptotically at most as fast as $f(t)$. In this article, we are interested in upper functions for L\'evy and L\'evy-type processes. Our aim is to establish integral criteria in terms of the characteristics and the symbol of the process -- see Section~\ref{def} for definitions -- which characterize whether $f$ is an upper function.  \par
For L\'evy processes, the study of upper functions was initiated by Khintchine \cite{khin39}. He showed that any one-dimensional L\'evy process satisfies the following law of iterated logarithm (LIL):
\begin{equation*}
	-  \liminf_{t \to 0} \frac{X_t}{\sqrt{2t \log \log \frac{1}{t}}} 
	= \limsup_{t \to 0} \frac{X_t}{\sqrt{2 t \log \log \frac{1}{t}}}
	= \sigma \quad \text{a.s.},
\end{equation*}
where $\sigma \geq 0$ is the diffusion coefficient. In consequence, the small-time asymptotics of a L\'evy process is governed by the Gaussian part if $\sigma \neq 0$. For this reason our focus is on processes with vanishing diffusion part. Khintchine \cite{khin39} also showed -- under some mild assumptions -- that $f$ is an upper function for a L\'evy process $(X_t)_{t \geq 0}$ if
\begin{equation*}
	\int_0^1 \frac{1}{t} \mbb{P}(|X_t| \geq c f(t)) \, dt < \infty
\end{equation*}
for a suitable constant $c>0$. In practice, it is often difficult to check whether the latter integral is finite. There is a more tractable criterion in terms of the L\'evy measure $\nu$. Namely, it holds for a wide class of functions $f$ that
\begin{equation}
	\limsup_{t \to 0} \frac{1}{f(t)} |X_t| = \begin{rcases}
	    \begin{dcases}  0 \\  \infty \end{dcases} \end{rcases}\, \, \text{a.s.} 
	\iff
	\int_{0}^1 \nu(\{y \in \mbb{R}^d; |y| \geq f(t)\}) \, dt \,\, \begin{rcases}
	    \begin{dcases}< \infty \\ = \infty \end{dcases} \end{rcases};
	\label{intro-eq1}
\end{equation}
this characterization is classical for stable L\'evy processes, see e.g.\ \cite{fristedt}, and has been extended to general one-dimensional L\'evy processes by Wee \& Kim \cite{wee88}. For some processes, \eqref{intro-eq1} breaks down, and it may happen that $\limsup_{t \to 0} \frac{1}{f(t)} |X_t| \in (0,\infty)$ almost surely, see \cite{bertoin08,savov09,wee88} for details. A number of further characterizations for upper functions are collected in Theorem~\ref{main-3}. We require only mild assumptions on the L\'evy process $(X_t)_{t \geq 0}$ and the mapping $f$; thus generalizing earlier results in the literature. For power functions $f(t)=t^{\kappa}$, there is a close connection to the Blumenthal--Getoor index, cf.\ Corollary~\ref{main-47}. \par
The second part of our results is about the small-time asymptotics of L\'evy-type processes. Intuitively, a L\'evy-type process behaves locally like a L\'evy process but the L\'evy triplet depends on the current position of the process, see Section~\ref{def} below for the precise definition. Important examples include solutions to L\'evy-driven stochastic differential equations (SDEs), processes of variable order and random time changes of L\'evy processes, just to mention a few. Studying the sample path behaviour of L\'evy-type processes is much more delicate than in the L\'evy case because the processes are no longer homogeneous in space, see \cite[Chapter 5]{ltp} for a survey on results for the closely related class of Feller processes. Schilling \cite{rs-growth} introduced generalized Blumenthal--Getoor index and obtained a criterion for a power function $f(t)=t^{\kappa}$ to be an upper function of a L\'evy-type process, see also \cite[Theorem 5.16]{ltp}. A recent paper by Reker \cite{reker20} studies the small-time asymptotics of solutions to SDEs driven by jump processes. Moreover, there are LIL-type results for L\'evy-type processes and other classes of jump processes available in the literature, see \cite{kim21,kim17,knop14} and the references therein. Our contribution in this paper is two-fold. Firstly, we establish sufficient conditions in terms of the characteristics and the symbol, which ensure that a given mapping $f$ is an upper function for a L\'evy-type process, cf.\ Theorem~\ref{main-5}. On the way, we obtain new results on upper functions for Markov processes, cf.\ Section~\ref{up}. Secondly, we prove a criterion for a given function $f$ \emph{not} to be an upper function, cf.\ Theorem~\ref{main-9}. The key ingredients for the proofs are a new maximal inequality for L\'evy-type processes, cf.\ Section~\ref{max}, and a conditional Borel--Cantelli lemma for backward filtrations.

\section{Main results} \label{main}

This section is divided into two parts: First, we present our results for L\'evy processes and then, in the second part, we state the results which apply for the wider class of L\'evy-type processes. See Section~\ref{def} below for definitions and notation. The following is our first main result.

\begin{theorem} \label{main-3}
	Let $(X_t)_{t \geq 0}$ be a L\'evy process with L\'evy triplet $(b,0,\nu)$ and characteristic exponent $\psi$ satisfying the sector condition, i.e.\ $|\im \psi(\xi)| \leq C \re \psi(\xi)$, $\xi \in \mbb{R}^d$, for some constant $C>0$. Let $f: [0,1] \to (0,\infty)$ be non-decreasing, and assume that one of the following conditions holds.
	\begin{enumerate}[label*=\upshape (A\arabic*),ref=\upshape A\arabic*] 
		\item\label{A1} The L\'evy measure $\nu$ satisfies \begin{equation*}
			\limsup_{r \to 0} \frac{\int_{|y| \leq r} |y|^2 \, \nu(dy)}{r^2 \nu(\{|y| > r\})} < \infty. 
		\end{equation*}
		\item\label{A2} There is a constant $M>0$ such that \begin{equation*}
				\int_{r<f(t)} \frac{1}{f(t)^2} \, dt \leq M \frac{f^{-1}(r)}{r^2}, \qquad r \in (0,1).
			\end{equation*}
	\end{enumerate}
	The following statements are equivalent. 
	\begin{enumerate}[label*=\upshape (L\arabic*),ref=\text{\upshape (L\arabic*)}]
		\item\label{main-3-i} $\int_0^1 \nu(\{|y| \geq cf(t)\}) \, dt < \infty$ for some $c>0$,
		\item\label{main-3-ii} $\int_0^1 \sup_{|\xi| \leq 1/(\eps f(t))} |\psi(\xi)| \, dt<\infty$ for some (all) $\eps>0$,
		\item\label{main-3-iii} $\int_0^1 \mbb{P} \left( \sup_{s \leq t} |X_s| \geq \eps f(t)\right) \frac{1}{t} \, dt < \infty$ for all $\eps>0$,
		\item\label{main-3-iv} $\int_0^1 \mbb{P}(|X_t| \geq \eps f(t)) \frac{1}{t} \, dt < \infty$ for all $\eps>0$,
		\item\label{main-3-v} $\limsup_{t \to 0} \frac{1}{f(t)} \sup_{s \leq t} |X_s|=0$ almost surely,
		\item\label{main-3-vi} $\limsup_{t \to 0} \frac{1}{f(t)} |X_t|=0$ almost surely,
		\item\label{main-3-vii} $\limsup_{t \to 0} \frac{1}{f(t)} |X_t|<\infty$ almost surely.
	\end{enumerate}
	In particular,
	\begin{equation*}
		\limsup_{t \to 0} \frac{1}{f(t)} |X_t| \in \{0,\infty\} \quad\text{a.s.}
	\end{equation*}
	In \ref{main-3-v}-\ref{main-3-vii} we may replace 'almost surely' by 'with positive probability'.
\end{theorem}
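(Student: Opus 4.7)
The plan is to establish the cycle
\[
\text{(L2)}\Longrightarrow\text{(L3)}\Longrightarrow\text{(L5)}\Longrightarrow\text{(L6)}\Longrightarrow\text{(L7)}\Longrightarrow\text{(L1)}\Longrightarrow\text{(L2)},
\]
together with the shortcut (L3)$\Rightarrow$(L4)$\Rightarrow$(L1). The implications (L3)$\Rightarrow$(L4) and (L5)$\Rightarrow$(L6)$\Rightarrow$(L7) are immediate from $|X_t|\leq\sup_{s\leq t}|X_s|$ and $\{0\}\subset\{<\infty\}$. The dichotomy $\limsup\in\{0,\infty\}$ then drops out because (L5)$\Leftrightarrow$(L7), and the strengthening from \emph{positive probability} to \emph{almost surely} comes from Blumenthal's 0-1 law: the random variable $\limsup_{t\to 0}|X_t|/f(t)$ is $\mc{F}_{0+}$-measurable, hence a.s.\ constant.

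For (L1)$\Leftrightarrow$(L2) I would start from the classical Pruitt-type two-sided estimate
\[
\sup_{|\xi|\leq r}\re\psi(\xi)\asymp r^{2}\int_{|y|\leq 1/r}|y|^{2}\,\nu(dy)+\nu(\{|y|>1/r\}),\qquad r>0,
\]
and use the sector condition to replace $\re\psi$ by $|\psi|$ on the left. With $r=1/(\eps f(t))$, (L2) becomes the integrability over $t\in(0,1)$ of the sum of a small-jump part and the tail $\nu(\{|y|>\eps f(t)\})$, the latter being exactly (L1). Under (A1) the small-jump part is pointwise dominated by a constant multiple of the tail; under (A2) one applies Fubini,
\[
\int_{0}^{1}\frac{1}{f(t)^{2}}\int_{|y|\leq\eps f(t)}|y|^{2}\,\nu(dy)\,dt=\int_{\mbb{R}^{d}}|y|^{2}\int_{\{t:\,f(t)>|y|/\eps\}}\frac{dt}{f(t)^{2}}\,\nu(dy),
\]
bounds the inner integral by $M\,f^{-1}(|y|/\eps)\cdot\eps^{2}/|y|^{2}$, and concludes by a second Fubini using $\int f^{-1}(|y|/\eps)\,\nu(dy)=\int_{0}^{1}\nu(\{|y|>\eps f(t)\})\,dt$.

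For (L2)$\Rightarrow$(L3) I would invoke the standard Lévy maximal inequality $\mbb{P}\bigl(\sup_{s\leq t}|X_s|\geq r\bigr)\leq Ct\sup_{|\xi|\leq 1/r}|\psi(\xi)|$; substituting $r=\eps f(t)$ and dividing by $t$ transfers integrability from the symbol to the probability. For (L3)$\Rightarrow$(L5) I would apply Borel-Cantelli along a dyadic sequence $t_{n}=2^{-n}$: since both $t\mapsto\sup_{s\leq t}|X_s|$ and $f$ are non-decreasing,
\[
\mbb{P}\bigl(\sup_{s\leq t_{n+1}}|X_s|\geq\eps f(t_{n})\bigr)\cdot\log 2\leq\int_{t_{n+1}}^{t_{n}}\frac{1}{t}\mbb{P}\bigl(\sup_{s\leq t}|X_s|\geq\eps f(t)\bigr)\,dt,
\]
so the left-hand sum is finite; Borel-Cantelli gives $\sup_{s\leq t_{n+1}}|X_s|<\eps f(t_{n})$ eventually, interpolation over $(t_{n+1},t_{n}]$ yields $\limsup\leq\eps\cdot C_{f}$, and letting $\eps\downarrow 0$ (for each $\eps$ separately) produces (L5); a minor regularity issue on $f$ is absorbed by the 0-1 law together with the freedom to shrink $\eps$. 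Finally, (L4)$\Rightarrow$(L1) uses the complementary single-big-jump lower bound $\mbb{P}(|X_t|\geq r)\geq ct\nu(\{|y|\geq 2r\})$, valid for $t$ small, after which dividing by $t$ and integrating yields (L1) with a modified constant.

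The most delicate step is (L7)$\Rightarrow$(L1). Assuming (L1) fails, i.e., $\int_{0}^{1}\nu(\{|y|>cf(t)\})\,dt=\infty$ for every $c>0$, the compensator of the jump-counting process
\[
N^{c}(t):=\sum_{s\leq t}\I_{|\Delta X_{s}|>cf(s)}
\]
is $\int_{0}^{t}\nu(\{|y|>cf(s)\})\,ds$ and, by monotonicity of $f$, diverges on every interval $(0,\delta]$. The Poisson structure of the jumps therefore forces a.s.\ infinitely many times $s_{k}\downarrow 0$ with $|\Delta X_{s_{k}}|>cf(s_{k})$. At each such $s_{k}$, splitting on whether $|X_{s_{k}-}|\geq cf(s_{k})/2$ or not yields either $|X_{s_{k}}|\geq cf(s_{k})/2$ or $|X_{s_{k}-}|\geq cf(s_{k}-)/2$, and in both cases $\limsup_{t\to 0}|X_{t}|/f(t)\geq c/2$ along a sequence going to $0$. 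Taking $c\uparrow\infty$ over a countable set contradicts (L7). This step---where the Lévy-Itô decomposition is used to isolate big jumps from the compensated small-jump martingale, and the divergence of the Poisson compensator forces accumulation of big jumps at $0$---is the main technical obstacle of the argument.
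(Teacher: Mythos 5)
Your overall architecture closely parallels the paper's (same cycle, same reliance on the maximal inequality, a Pruitt-type two-sided estimate of $\psi^*$ for \ref{main-3-i}$\Leftrightarrow$\ref{main-3-ii}, the Poisson jump-count divergence for \ref{main-3-vii}$\Rightarrow$\ref{main-3-i}, Blumenthal's $0$--$1$ law for the a.s.\ dichotomy). Two of your steps are genuinely equivalent to the paper's: your \ref{main-3-vii}$\Rightarrow$\ref{main-3-i} argument using the Poisson random measure directly (infinitely many $s_k\downarrow 0$ with $|\Delta X_{s_k}|>cf(s_k)$, then the case split on $|X_{s_k-}|$) is the same mechanism the paper runs through a discretized count $Y_n$ and Borel--Cantelli; and your Fubini computation under \eqref{A2} is exactly Lemma~\ref{p-1}. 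The paper closes the \ref{main-3-iv} loop via Etemadi's inequality (\ref{main-3-iv}$\Rightarrow$\ref{main-3-iii}) rather than your single-big-jump bound \ref{main-3-iv}$\Rightarrow$\ref{main-3-i}; both work.

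There is, however, a real gap in your step \ref{main-3-iii}$\Rightarrow$\ref{main-3-v}. Your Borel--Cantelli along $t_n=2^{-n}$ gives, for $n$ large, $\sup_{s\leq t_{n+1}}|X_s|<\eps f(t_n)$; to interpolate for $t\in(t_{n+1},t_n]$ you are then forced to compare $f$ across dyadic scales: the bound you actually get is $\sup_{s\leq t}|X_s|/f(t)<\eps\,f(t_{n-1})/f(t_{n+1})$, and for a general non-decreasing $f$ the ratio $f(2^{-(n-1)})/f(2^{-(n+1)})$ need not be bounded (e.g.\ $f(2^{-n})=2^{-n^2}$). Your ``escape hatch'' --- the $0$--$1$ law plus shrinking $\eps$ --- does not close this: the $0$--$1$ law only asserts that $\limsup_{t\to 0}\sup_{s\leq t}|X_s|/f(t)$ is a.s.\ constant, and shrinking $\eps$ multiplies a possibly infinite ratio by $\eps$. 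The paper avoids the ratio altogether (Theorem~\ref{up-4}): it considers the event $A_n=\{\sup_{2^{-(n+1)}\leq s\leq 2^{-n}}\sup_{r\leq s}|X_r|/f(s)\geq 4\}$ directly, bounds $\mbb{P}(A_n)\leq \mbb{P}(\sup_{s\leq 2^{-n}}|X_s|\geq 4f(2^{-(n+1)}))$ purely by monotonicity, and then halves the time window twice via the Markov splitting estimate $\mbb{P}(\sup_{s\leq 2t}|X_s|\geq 2r)\leq 3\,\mbb{P}(\sup_{s\leq t}|X_s|\geq r)$; an averaging over a shifted dyadic parameter $\theta_n=2^{-u}$, $u\in[n-1,n]$, converts $\sum_n\mbb{P}(A_n)$ into the integral in \ref{main-3-iii} without ever invoking regularity of $f$ beyond monotonicity. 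To fix your proof you need exactly this Markov splitting and the dyadic averaging; replacing ``interpolation yields $\limsup\leq\eps C_f$'' with these two ingredients makes your argument correct, and the ``$C_f$'' never appears.
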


Theorem~\ref{main-3} generalizes  \cite[Corollary 11.3]{fristedt} for stable processes and the results from \cite[Section III.4]{bertoin} for subordinators. Savov \cite{savov09} proved the equivalence of \ref{main-3-i} and \ref{main-3-vi} under (a bit stronger condition than) \eqref{A2} and the assumption that $(X_t)_{t \geq 0}$ has paths of unbounded variation. The equivalence of \ref{main-3-iv} and \ref{main-3-vi} goes back to Khintchine \cite{khin39}, see also \cite[Appendix, Theorem 2]{skorohod91}. The proof of Theorem~\ref{main-3} will be presented in Section~\ref{p}.

\begin{remark} \label{main-4} \begin{enumerate}[wide, labelwidth=!, labelindent=0pt]
	\item\label{main-4-ii} The proof of Theorem~\ref{main-3} shows that the implications \begin{equation*}
		\ref{main-3-ii}\implies\ref{main-3-iii}\implies\ref{main-3-iv}\implies\ref{main-3-v}\implies\ref{main-3-vi}\implies\ref{main-3-vii}\implies\ref{main-3-i}
	\end{equation*} 
	hold without the sector condition. The sector condition is only needed to relate the integrals in \ref{main-3-i} and \ref{main-3-ii} to each other. In fact, the key for the proof of \ref{main-3-i} $\implies$\ref{main-3-ii} is the implication
	\begin{equation}
		\exists c>0\::\: \int_0^1 \nu(\{|y| \geq cf(t)\}) \, dt < \infty
		\implies \forall \eps>0\::\: \int_0^1 \sup_{|\xi| \leq 1/(\eps f(t))} \re \psi(\xi) \, dt<\infty, \label{main-eq7}
	\end{equation}
	which does not require the sector condition, see Lemma~\ref{p-1} below; the sector condition is then used to replace $\re \psi$ by $|\psi|$ in the integral on the right-hand side. 
	\item\label{main-4-iii} For the equivalences to hold, it is crucial that one of the assumptions \eqref{A1}, \eqref{A2} is satisfied; if both assumptions are violated, then the equivalences break down in general and it may happen that
	\begin{equation*}
		0< \limsup_{t \to 0} \frac{1}{f(t)} |X_t| < \infty \quad \text{a.s.},
	\end{equation*}
	see \cite{bertoin08,savov09,wee88} and Example~\ref{main-48} below.
	\item\label{main-4-i} Condition \eqref{A2} holds for any continuous function $f:[0,1] \to [0,\infty)$ satisfying $\frac{f(t)}{t} \uparrow \infty$ as $t \downarrow 0$ and $\frac{f(t)}{t^{\alpha}} \downarrow 0$ as $t \downarrow 0$ for some $\alpha>\tfrac{1}{2}$, cf.\ \cite[Proof of Corollary 2.1]{savov09}. While this criterion is useful in many cases, it is too restrictive in some situations. For instance, if $(X_t)_{t \geq 0}$ is an isotropic $\alpha$-stable L\'evy process, then $f(t)=t^{1/(\alpha-\eps)}$ is an upper function, cf.\ Example~\ref{main-45} below, but clearly  $\frac{f(t)}{t} \uparrow \infty$ as $t \to 0$ fails to hold if $\alpha<1$. On the other hand, a straightforward calculation shows that the L\'evy measure of the isotropic $\alpha$-stable L\'evy process satisfies \eqref{A1}, and therefore Theorem~\ref{main-3} applies in this case without any additional growth assumptions on $f$. For further comments on \eqref{A1} and equivalent formulations, we refer to Remark~\ref{p-3}.
\end{enumerate} \end{remark}

Let us illustrate Theorem~\ref{main-3} with an example.

\begin{example} \label{main-45} 
	Let $(X_t)_{t \geq 0}$ be an $\alpha$-stable pure-jump L\'evy process, $\alpha \in (0,2)$, that is, a L\'evy process with L\'evy triplet $(0,0,\nu)$ where the L\'evy measure $\nu$ is of the form
	\begin{equation*}
		\nu(A) = \int_0^{\infty} \int_{\mbb{S}^{d-1}} \I_A(r \theta) \frac{1}{r^{1+\alpha}} \, \mu(d\theta) \, dr
	\end{equation*}
	for a measure $\mu$ on the sphere $\mbb{S}^{d-1}$ satisfying $\mu(\mbb{S}^{d-1})>0$, see \cite{sato} for a thorough discussion of stable processes. Theorem~\ref{main-3} shows that
	\begin{equation}
		\limsup_{t \to 0} \frac{1}{f(t)} |X_t|= \begin{rcases}
			    \begin{dcases} 0 \\ \infty \end{dcases} \end{rcases} \, \, \text{a.s.} 
		\iff \int_0^1 |f(t)|^{-\alpha} \, dt \,\, \begin{rcases}
	    \begin{dcases} < \infty \\ = \infty \end{dcases} \end{rcases} 
		\label{main-eq9}
	\end{equation}
	for any non-decreasing function $f:[0,1] \to [0,\infty)$, and so we recover the classical characterization for upper functions of sample paths of stable L\'evy processes, see e.g.\ \cite[Corollary 11.3]{fristedt}.
\end{example}

For power functions $f(t)=t^{\kappa}$, the finiteness of $\limsup_{t \to 0} \frac{1}{f(t)} |X_t|$ can be characterized in terms of the Blumenthal--Getoor index \begin{equation*}
	\beta := \inf\left\{ \alpha>0; \int_{|y|<1} |y|^{\alpha} \, \nu(dy)< \infty \right\} \in [0,2],
\end{equation*}
which was introduced in \cite{bg61}. The following result is immediate from Theorem~\ref{main-3}.

\begin{corollary} \label{main-47}
	Let $(X_t)_{t \geq 0}$ be a L\'evy process with L\'evy triplet $(b,0,\nu)$ and assume that the characteristic exponent satisfies the sector condition. Then
	\begin{equation}
		\limsup_{t \to 0} \frac{1}{t^{\kappa}} |X_t| = \begin{rcases}
	    \begin{dcases}0 \\ \infty \end{dcases} \end{rcases}\, \, \text{a.s.} 
		\iff \int_{|y|<1} |y|^{1/\kappa} \, \nu(dy) \begin{rcases}
	    \begin{dcases}<\infty \\ = \infty \end{dcases} \end{rcases}
		\label{main-eq4}
	\end{equation}
	for every $\kappa \in (\tfrac{1}{2},\infty)$, and
	\begin{equation}
		\limsup_{t \to 0} \frac{1}{t^{\kappa}} |X_t| = \begin{rcases}
	    \begin{dcases}0 \\ \infty \end{dcases} \end{rcases} \, \, \text{a.s.} \quad \text{according as} \quad \begin{rcases}
	    \begin{dcases}\kappa < 1/\beta \\ \kappa>1/\beta \end{dcases} \end{rcases}
		\label{main-eq5}
	\end{equation}
	for all $\kappa>0$. If \eqref{A1} from Theorem~\ref{main-3} is satisfied, then 
	\begin{equation*}
		\limsup_{t \to 0} \frac{1}{\sqrt{t}} |X_t| = 0 \quad \text{a.s.}
	\end{equation*}
\end{corollary}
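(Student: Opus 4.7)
The plan is to apply Theorem~\ref{main-3} with $f(t)=t^{\kappa}$ for $\kappa>\tfrac{1}{2}$, and deduce all three statements as consequences. First I would check that \eqref{A2} is satisfied by every such $f$. Since $\{r<t^{\kappa}\}=\{t>r^{1/\kappa}\}$, direct integration gives
\[
\int_{r<t^{\kappa}}\frac{dt}{t^{2\kappa}}
=\int_{r^{1/\kappa}}^{1}t^{-2\kappa}\,dt
=\frac{r^{1/\kappa-2}-1}{2\kappa-1}
\leq\frac{1}{2\kappa-1}\cdot\frac{f^{-1}(r)}{r^{2}},
\]
so \eqref{A2} holds with $M=(2\kappa-1)^{-1}$. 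Next I would identify the integral in \ref{main-3-i} with the tail integral appearing in \eqref{main-eq4}: the substitution $u=ct^{\kappa}$ combined with Fubini yields
\[
\int_{0}^{1}\nu(\{|y|\geq ct^{\kappa}\})\,dt
=\frac{1}{\kappa c^{1/\kappa}}\int_{\mbb{R}^{d}}\bigl(|y|\wedge c\bigr)^{1/\kappa}\,\nu(dy),
\]
which is finite iff $\int_{|y|<1}|y|^{1/\kappa}\,\nu(dy)<\infty$, because any L\'evy measure is finite outside the unit ball. The equivalence \ref{main-3-i}$\Leftrightarrow$\ref{main-3-vi}$\Leftrightarrow$\ref{main-3-vii} from Theorem~\ref{main-3} then delivers \eqref{main-eq4}.

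For the dichotomy \eqref{main-eq5} I would invoke the definition of the Blumenthal--Getoor index. If $\kappa>1/\beta$, then $1/\kappa<\beta$ forces $\int_{|y|<1}|y|^{1/\kappa}\,\nu(dy)=\infty$; since $\kappa>1/\beta\geq\tfrac{1}{2}$, the conclusion $\limsup_{t\to 0}t^{-\kappa}|X_{t}|=\infty$ follows from \eqref{main-eq4}. If $\kappa<1/\beta$, I would pick $\kappa'\in(\max\{\kappa,\tfrac{1}{2}\},1/\beta)$; then $1/\kappa'>\beta$ gives $\int_{|y|<1}|y|^{1/\kappa'}\,\nu(dy)<\infty$, hence $\limsup_{t\to 0}t^{-\kappa'}|X_{t}|=0$ a.s., and the result for $\kappa$ follows from the monotonicity $t^{-\kappa}\leq t^{-\kappa'}$ on $(0,1]$.

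For the last statement I would apply Theorem~\ref{main-3} with $f(t)=\sqrt{t}$. Condition \eqref{A2} fails for this $f$ (the left-hand side is $2\log(1/r)$, which blows up as $r\to 0$), which is precisely why \eqref{A1} is indispensable. Repeating the Fubini computation above with $\kappa=\tfrac{1}{2}$ gives
\[
\int_{0}^{1}\nu(\{|y|\geq c\sqrt{t}\})\,dt
=\frac{1}{c^{2}}\int_{\mbb{R}^{d}}(|y|\wedge c)^{2}\,\nu(dy),
\]
which is automatically finite because $\int(|y|^{2}\wedge 1)\,\nu(dy)<\infty$ for every L\'evy measure. Thus \ref{main-3-i} holds for free, and Theorem~\ref{main-3} (with \eqref{A1}) yields $\limsup_{t\to 0}t^{-1/2}|X_{t}|=0$ a.s.

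The only subtlety I anticipate lies in the borderline case $\beta=2$ in \eqref{main-eq5}, where $1/\beta=\tfrac{1}{2}$ and the auxiliary $\kappa'$ above cannot be chosen in $(\tfrac{1}{2},1/\beta)$. In that corner one would have to supplement the argument, e.g.\ by combining with the $\sqrt{t}$ bound from the last statement when \eqref{A1} holds, or by noting that the regime $\kappa<1/2=1/\beta$ requires further structural assumptions to conclude.
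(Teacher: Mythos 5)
Your approach is essentially the paper's intended one (the paper declares the corollary ``immediate from Theorem~\ref{main-3}''), and the verification of \eqref{A2} for $f(t)=t^{\kappa}$, $\kappa>\tfrac12$, as well as the identification of $\int_0^1\nu(\{|y|\geq cf(t)\})\,dt$ with a fractional moment of $\nu$ near the origin, are the right computations. Two small things. First, your Fubini formula has a spurious factor $1/\kappa$: the correct identity is
\begin{equation*}
\int_0^1\nu(\{|y|\geq ct^{\kappa}\})\,dt
=\int_{\mbb{R}^d}\min\Bigl\{1,(|y|/c)^{1/\kappa}\Bigr\}\,\nu(dy)
=\frac{1}{c^{1/\kappa}}\int_{\mbb{R}^d}(|y|\wedge c)^{1/\kappa}\,\nu(dy),
\end{equation*}
which is harmless for the finiteness conclusion but worth noting. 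Second, and more substantively, the borderline case you flag, namely $\beta=2$ and $\kappa<\tfrac12=1/\beta$, is a genuine gap in the argument as written, and both repairs you suggest are off: you do not need \eqref{A1}, and no ``further structural assumptions'' are required. The correct fix is elementary: for any $\kappa<\tfrac12$, the bound $|\psi(\xi)|\leq C(1+|\xi|^2)$ (cf.\ \eqref{def-eq13}) gives $\int_0^1\sup_{|\xi|\leq 1/(\eps t^{\kappa})}|\psi(\xi)|\,dt\leq C'\int_0^1(1+t^{-2\kappa})\,dt<\infty$, so condition~\ref{main-3-ii} holds automatically, and by Remark~\ref{main-4}\ref{main-4-ii} the chain $\ref{main-3-ii}\Rightarrow\ref{main-3-vi}$ is valid with no side conditions whatsoever (no sector condition, no \eqref{A1}/\eqref{A2}). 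This yields $\limsup_{t\to0}t^{-\kappa}|X_t|=0$ for \emph{every} L\'evy process when $\kappa<\tfrac12$, which covers the corner you were worried about (and is also the content of Proposition~\ref{up-11}\ref{up-11-i}). With this addition the proof of \eqref{main-eq5} is complete; the remainder of your argument is correct.
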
	

The characterization \eqref{main-eq5} goes back to Pruitt \cite{pruitt81} and Blumenthal \& Getoor \cite{bg61}, see also \cite[Proposition 47.24]{sato}. Note that the critical case $\kappa=1/\beta$ is excluded in \eqref{main-eq5}; one has to check by hand whether the integral $\int_{|y|<1}|y|^{\beta} \, \nu(dy)$ is finite. In \eqref{main-eq4} the critical case is $\kappa=\frac{1}{2}$; this is due to the fact that $\int_{|y| < 1} |y|^2 \, \nu(dy) < \infty$ is always satisfied but at the same time there are pure-jump L\'evy processes with
\begin{equation*}
	0< \limsup_{t \to 0} \frac{1}{\sqrt{t}} |X_t| < \infty \quad \text{a.s.}
\end{equation*}
cf.\ \cite{bertoin08,wee88}. Consequently, \eqref{main-eq4} fails, in general, for $\kappa=\tfrac{1}{2}$. Let us give an example of such a process and explain why this is not a contradiction to Theorem~\ref{main-3}.

\begin{example} \label{main-48}
	Let $(X_t)_{t \geq 0}$ be a one-dimensional L\'evy process with L\'evy triplet $(0,0,\nu)$ and L\'evy measure
	\begin{equation*}
		\nu(dy) = \frac{1}{2} \frac{1}{|y|^2} \varphi'(|y|) \I_{(0,1/e^e)}(|y|) \, dy
	\end{equation*}
	for $\varphi(r)=1/\log \log \frac{1}{r}$. Note that $\nu$ is indeed a L\'evy measure, i.e.\ $\int \min\{|y|^2,1\} \, \nu(dy)<\infty$. As
	\begin{equation}
		\int_{|y| \leq r} |y|^2 \, \nu(dy) = \varphi(r), \label{main-eq6}
	\end{equation}
	it follows from \cite[Theorem 2.2]{bertoin08} that 
	\begin{equation*}
		\limsup_{t \to 0} \frac{1}{\sqrt{t}} |X_t| = \sqrt{2} \quad \text{a.s.}
	\end{equation*}
	In particular, \eqref{main-eq4} breaks down for $\kappa=\tfrac{1}{2}$ and the equivalences in Theorem~\ref{main-3} fail to hold for $f(t)=\sqrt{t}$. This is not, however, a contradiction to Theorem~\ref{main-3} because the assumptions \eqref{A1} and \eqref{A2} in Theorem~\ref{main-3} are both violated. It is straightforward to check that \eqref{A2} fails for $f(t)=\sqrt{t}$; to see that \eqref{A1} fails we note that, by the Karamata's Tauberian theorem, see e.g.\ \cite{bingham},
	\begin{equation*}
		\nu(\{|y| > r\}) 
		= \int_r^{1/e^e} \frac{1}{y^2} \varphi'(y) \, dy  
		\approx \frac{1}{2} r^{-2} \frac{1}{\log \frac{1}{r} \left(\log \log \frac{1}{r}\right)^2} \quad \text{as $r \to 0$},
	\end{equation*}
	and thus, by \eqref{main-eq6} and the definition of $\varphi$,
	\begin{equation*}
		\lim_{r \to 0} \frac{\int_{|y| \leq r} |y|^2 \, \nu(dy)}{r^2 \nu(\{|y|>r\})} = \infty.
	\end{equation*}
\end{example}

Next we present our results for the wider class of L\'evy-type processes, see Section~\ref{def} below for the definition. The following theorem gives sufficient conditions for an increasing function $f:[0,1] \to [0,\infty)$ to be an upper function of a L\'evy-type process. 

\begin{theorem} \label{main-5} 
	Let $(X_t)_{t \geq 0}$ be a L\'evy-type process with characteristics $(b(x),0,\nu(x,dy))$ and symbol $q$ satisfying the sector condition. Let $x \in \mbb{R}^d$ and $R>0$ such that 
	 \begin{equation*}
				M:=\limsup_{r \to 0} \sup_{|z-x| \leq R} \frac{\int_{|y| \leq r} |y|^2 \, \nu(z,dy)}{r^2 \nu(z,\{|y| > r\})} < \infty.
				\tag{A1'}
				\label{A1'}
		\end{equation*}
	Then the implications
		\begin{equation*}
			\ref{main-5-i} \implies \ref{main-5-ii} \implies\ref{main-5-iii} \implies \ref{main-5-iv} 
		\end{equation*}
	hold for any non-decreasing function $f:[0,1] \to (0,\infty)$, where
	\begin{enumerate}[label*=\upshape (LTP\arabic*),ref=\text{\upshape (LTP\arabic*)}]
		\item\label{main-5-i} $\int_0^1 \sup_{|z-x| \leq f(t)} \nu(z,\{|y| \geq c f(t)\}) \, dt < \infty$ for some $c>0$, 
		\item\label{main-5-ii} $\int_0^1 \sup_{|z-x| \leq f(t)} \sup_{|\xi| \leq 1/(\eps f(t))} |q(z,\xi)| \, dt < \infty$ for some (all) $\eps>0$,
		\item\label{main-5-iii} $\int_0^1 \sup_{|z-x| \leq f(t)} \mbb{P}^z \left( \sup_{s \leq t} |X_s-z| \geq \eps f(t) \right) \, \frac{1}{t} \, dt < \infty$ for all $\eps>0$,
		\item\label{main-5-iv} $\limsup_{t \to 0} \frac{1}{f(t)} \sup_{s \leq t} |X_s-x|=0$ $\mbb{P}^x$-almost surely.
	\end{enumerate}
\end{theorem}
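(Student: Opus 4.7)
\textbf{Proof plan for Theorem~\ref{main-5}.} I would establish the three implications in turn, closely paralleling the structure of Theorem~\ref{main-3} (and Remark~\ref{main-4}\ref{main-4-ii}), with the sup over starting points $|z-x|\leq f(t)$ playing the role that the spatial homogeneity of a L\'evy process plays there.

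For \ref{main-5-i}$\Rightarrow$\ref{main-5-ii}, I would adapt Lemma~\ref{p-1} (the $\re\psi$-variant of \eqref{main-eq7}) to the inhomogeneous setting. The standard Courr\`ege-type bound
\begin{equation*}
\re q(z,\xi) \leq C\Bigl(|\xi|^2 \int_{|y|\leq 1/|\xi|} |y|^2\,\nu(z,dy) + \nu(z,\{|y|>1/|\xi|\})\Bigr)
\end{equation*}
holds uniformly in $z$. Assumption \eqref{A1'} lets me absorb the first summand into the second up to the constant $M$ (for $|z-x|\leq R$ and $|\xi|$ large), after which specialising to $|\xi|\leq 1/(\eps f(t))$ and taking the sup over $|z-x|\leq f(t)$ yields $\re q(z,\xi)\lesssim \sup_{|z-x|\leq f(t)}\nu(z,\{|y|\geq c f(t)\})$ for some $c>0$. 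The sector condition transfers the bound from $\re q$ to $|q|$.

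For \ref{main-5-ii}$\Rightarrow$\ref{main-5-iii}, I would invoke the new maximal inequality announced in Section~\ref{max}, which I expect to read
\begin{equation*}
\mbb{P}^z\Bigl(\sup_{s\leq t}|X_s - z| \geq r\Bigr) \leq K\,t\, \sup_{|w-z|\leq r}\,\sup_{|\xi|\leq 1/r} |q(w,\xi)|.
\end{equation*}
Applied with $r=\eps f(t)$ and with $z$ supped over $|z-x|\leq f(t)$, the inner sup ranges over $w$ with $|w-x|\leq (1+\eps)f(t)$. Dividing by $t$, integrating and then replacing $\eps$ by a smaller parameter in \ref{main-5-ii} to swallow the factor $(1+\eps)$ produces \ref{main-5-iii}.

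For \ref{main-5-iii}$\Rightarrow$\ref{main-5-iv} --- the main obstacle --- I would run a dyadic Borel--Cantelli argument, most likely through the abstract upper-function results for Markov processes announced in Section~\ref{up}. Fix $\eps>0$ and set $t_n=2^{-n}$. The decomposition
\begin{equation*}
\sup_{s\leq t_n}|X_s-x| \leq |X_{t_{n+1}}-x| + \sup_{t_{n+1}\leq s\leq t_n}|X_s - X_{t_{n+1}}|
\end{equation*}
is the starting point. Conditioning on $\mc{F}_{t_{n+1}}$ on the event $\{|X_{t_{n+1}}-x|\leq f(t_{n+1})\}$ and using the Markov property, the tail of the second term is controlled by $\sup_{|z-x|\leq f(t_{n+1})}\mbb{P}^z(\sup_{s\leq t_n}|X_s-z|\geq \eps f(t_n))$; summing over $n$ and comparing with $\int_{t_{n+1}}^{t_n}(\cdot)\,dt/t$ shows the series is finite by \ref{main-5-iii}. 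The first term requires the companion bound $\mbb{P}^x(|X_{t_{n+1}}-x|\geq f(t_{n+1}))$, which follows from the same input (applied with $z=x$ and the maximal inequality) together with an inductive decoupling. Borel--Cantelli and the monotonicity of $f$ then yield \ref{main-5-iv}. The delicate point here is the coupling between the Markov property and the hypothesis $X_0=x$: on the bad event $\{|X_{t_{n+1}}-x|>f(t_{n+1})\}$ we cannot use the sup over a small ball, so the induction/iteration must keep $X_{t_{n+1}}$ trapped near $x$. This is exactly the obstacle that the general Markov-process framework of Section~\ref{up} should be designed to resolve.
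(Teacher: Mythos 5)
Your plan follows the paper's route essentially step for step: \ref{main-5-i}$\Rightarrow$\ref{main-5-ii} via an inhomogeneous version of Lemma~\ref{p-1} (the paper uses the reformulation of \eqref{A1'} through concentration functions from Remark~\ref{p-3}\ref{p-3-i} rather than writing out the Courr\`ege bound, but the substance is identical), \ref{main-5-ii}$\Rightarrow$\ref{main-5-iii} via the maximal inequality \eqref{max-eq0}, and \ref{main-5-iii}$\Rightarrow$\ref{main-5-iv} via the abstract upper-function result Theorem~\ref{up-4}, whose proof is exactly the dyadic Markov--Borel--Cantelli argument you sketch.

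One technical point in your \ref{main-5-ii}$\Rightarrow$\ref{main-5-iii} step deserves a second look. You correctly note that applying \eqref{max-eq0} with $r=\eps f(t)$ and then supping over $|z-x|\leq f(t)$ produces a spatial supremum over $|w-x|\leq(1+\eps)f(t)$, but your proposed fix -- ``replacing $\eps$ by a smaller parameter in \ref{main-5-ii}'' -- does not close the gap: shrinking $\eps$ enlarges the $\xi$-range $|\xi|\leq 1/(\eps f(t))$, which the subadditivity of $\sqrt{|q|}$ handles, but the spatial radius $(1+\eps)f(t)$ stays strictly larger than $f(t)$ for every $\eps>0$. The clean way out (implicit in the paper, explicit in the proof of Corollary~\ref{up-9}) is to feed the maximal inequality into Theorem~\ref{up-4} with the rescaled function $\tilde{f}=\eta f$ for $\eta>0$ small: the spatial sup in \eqref{max-eq0} is then over $|w-x|\leq 2\tilde f(t)=2\eta f(t)\leq f(t)$, which is covered by \ref{main-5-ii}, while the enlarged $\xi$-range $|\xi|\leq 1/\tilde f(t)$ is controlled by subadditivity; letting $\eta\to0$ gives \ref{main-5-iv} directly. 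Alternatively, your derivation of \ref{main-5-ii} from \ref{main-5-i} actually yields the bound $\sup_{|\xi|\leq 1/r}|q(z,\xi)|\leq K\nu(z,\{|y|>r\})$ uniformly for $z\in\overline{B(x,R)}$, so \ref{main-5-ii} holds with the spatial sup over any $Cf(t)$, which also removes the obstruction. Your concern about ``keeping $X_{t_{n+1}}$ trapped near $x$'' in the last step is precisely what the spatial sup over $|z-x|\leq f(t)$ in \eqref{up-eq3} is designed to absorb, as the paper's two-step proof of Theorem~\ref{up-4} shows.
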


\begin{remark} \label{main-6} \begin{enumerate}[wide, labelwidth=!, labelindent=0pt]
	\item\label{main-6-i} The implications $\ref{main-5-ii} \implies \ref{main-5-iii}\implies\ref{main-5-iv}$ hold for \emph{any} L\'evy-type process, i.e.\ all the additional assumptions are only needed for the proof of the implication $\ref{main-5-i}\implies\ref{main-5-ii}$.
	\item\label{main-6-ii} The implication $\ref{main-5-iii} \implies \ref{main-5-iv}$ holds for any strong Markov process, see Theorem~\ref{up-4}.
	\item\label{main-6-iii} In Theorem~\ref{main-5} we assume that the symbol $q$ satisfies the sector condition, cf.\ \eqref{def-eq14}. A close look at the proof shows that we actually only need a local sector condition, in the sense that, for fixed $x \in \mbb{R}^d$ there are constants $r>0$ and $C>0$ such that
	\begin{equation*}
		|\im q(z,\xi)| \leq C \re q(z,\xi) \fa \xi \in \mbb{R}^d, |z-x| \leq r.
	\end{equation*}
	The same is true for the Proposition~\ref{main-8} and Theorem~\ref{main-9} below.
	\item\label{main-6-iv} As already mentioned, assumption \eqref{A1'} is crucial for the proof of the implication $\ref{main-5-i}\implies\ref{main-5-ii}$. One might ask whether this implication also holds under assumption \eqref{A2} from Theorem~\ref{main-3}. We did not manage to prove this, but there is the following slightly weaker result. Consider
	\begin{equation}
		\int_0^1 \sup_{|z-x| \leq R}  \nu(z,\{|y| \geq c f(t)\}) \, dt < \infty \quad \text{for some $c>0$, $R>0$.}
		\tag{LTP1'} \label{LTP1'}
	\end{equation}	
	Note that \eqref{LTP1'} is a bit stronger than \ref{main-5-i}. If \eqref{A2} holds and 
	\begin{equation}
		\forall r \in (0,R) \, \exists x_0 \in \overline{B(x,r)} \, \, \forall \xi \in \mbb{R}^d,|\xi| \geq 1\::\: \sup_{|z-x| \leq r} |q(z,\xi)| \leq |q(x_0,\xi)|,
		\label{main-eq15}
	\end{equation}
	then $\eqref{LTP1'} \implies \ref{main-5-ii}$. Because of the majorization condition \eqref{main-eq15}, the proof of this assertion is analogous to the case of L\'evy processes, see the proof of Theorem~\ref{main-3} and Lemma~\ref{p-1}.
\end{enumerate}
\end{remark}

For the particular case that $(X_t)_{t \geq 0}$ is a L\'evy process, Theorem~\ref{main-5} yields the implications $\ref{main-3-i}\implies\ref{main-3-ii}\implies\ref{main-3-iii}\implies\ref{main-3-v}$ from Theorem~\ref{main-3}. In this sense, Theorem~\ref{main-5} is a natural extension of Theorem~\ref{main-3}. Unlike in the L\'evy case, it is not to be expected that the conditions \ref{main-5-i}-\ref{main-5-iv} in Theorem~\ref{main-5} are equivalent for a general L\'evy-type process. However, there is the following partial converse.

\begin{proposition}\label{main-7}
	Let $(X_t)_{t \geq 0}$ be a L\'evy-type process with characteristics $(b(x),0,\nu(x,dy))$, and let $f:[0,1] \to [0,\infty)$ be non-decreasing. \begin{enumerate}
		\item\label{main-7-i}If $x \in \mbb{R}^d$ is such that
		\begin{equation*}
			\int_0^1 \sup_{|z-x| \leq f(t)} \mbb{P}^z \left( \sup_{s \leq t} |X_s-z| \geq f(t) \right) \frac{1}{t} \, dt < \infty,
		\end{equation*}
		then
		\begin{equation*}
			\int_0^1 \inf_{|z-x| \leq 10f(t)} \nu(z; \{|y| > 10 f(t)\}) \, dt < \infty.
		\end{equation*}
		\item\label{main-7-ii} Assume that \eqref{A1'} from Theorem~\ref{main-5} holds for some $R>0$ and $x \in \mbb{R}^d$. If 
		\begin{equation*}
			\int_0^1 \inf_{|z-x| \leq C f(t)} \nu(z; \{|y| > C f(t)\}) \, dt < \infty
		\end{equation*}
		for a constant $C>0$, then \begin{equation*}
			\int_0^1 \inf_{|z-x| \leq C f(t)} \sup_{|\xi| \leq c/f(t)} \re q(z,\xi) \, dt < \infty
		\end{equation*}
		for all $c>0$.
	\end{enumerate}
\end{proposition}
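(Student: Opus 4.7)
\emph{Part (i).} The plan is to convert the path-wise hypothesis into an intensity lower bound by running Dynkin's formula against a cutoff that sees only large jumps. Fix $t \in (0,1)$, set $r := f(t)$, take $z_0 := x$, and let $\tau := \inf\{s \ge 0 : |X_s - z_0| \ge r\}$. Choose a smooth $\phi : \mbb{R}^d \to [0,1]$ with $\phi \equiv 0$ on $B(z_0, r)$ and $\phi \equiv 1$ outside $B(z_0, 9 r)$. Since $\phi$ and its derivatives vanish on $B(z_0, r)$, and any $y$ with $|y| > 10 r$ forces $|z + y - z_0| > 9 r$, the generator satisfies $A\phi(z) \ge \nu(z, \{|y| > 10 r\})$ for every $z \in B(z_0, r)$. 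The martingale problem stopped at $\tau$, combined with $\phi(z_0) = 0$ and the observation that $\phi(X_{t \wedge \tau}) = 0$ on $\{t < \tau\}$, then yields
\[
\mbb{P}^{z_0}(\tau \le t) \;\ge\; \mbb{E}^{z_0}[\phi(X_{t \wedge \tau})] \;\ge\; \inf_{|z - x| \le 10 r} \nu(z, \{|y| > 10 r\}) \cdot \mbb{E}^{z_0}[t \wedge \tau].
\]
Writing $\Phi(t)$ for the integrand of the hypothesis, on the set $\{\Phi(t) \le 1/2\}$ we have $\mbb{P}^{z_0}(\tau < t) \le 1/2$, hence $\mbb{E}^{z_0}[t \wedge \tau] \ge t/2$ and thus $\inf_z \nu(z, \{|y| > 10 r\}) \le 2 \Phi(t)/t$; integrating in $t$ and using $\int_0^1 \Phi(t)/t\, dt < \infty$ controls this portion of the integral.

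\emph{Part (ii).} Since the diffusion part vanishes, $\re q(z, \xi) = \int (1 - \cos(y \cdot \xi))\, \nu(z, dy)$. Splitting at $|y| = 1/|\xi|$ and using $1 - \cos u \le \tfrac{1}{2} u^2 \wedge 2$ gives
\[
\re q(z, \xi) \;\le\; \frac{|\xi|^2}{2} \int_{|y| \le 1/|\xi|} |y|^2\, \nu(z, dy) \;+\; 2\, \nu(z, \{|y| > 1/|\xi|\}).
\]
By \eqref{A1'} the first term is dominated by $\tfrac{M}{2} \nu(z, \{|y| > 1/|\xi|\})$ uniformly for $|z - x| \le R$ and $1/|\xi|$ small enough, so $\sup_{|\xi| \le c/f(t)} \re q(z, \xi) \le C_M\, \nu(z, \{|y| > f(t)/c\})$. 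To replace the radius $f(t)/c$ by $C f(t)$ one invokes the doubling consequence of \eqref{A1'}, namely $\nu(z, \{|y| > r/2\}) \le (1 + 4 M)\, \nu(z, \{|y| > r\})$, which follows from $(r/2)^2 \nu(z, \{r/2 < |y| \le r\}) \le \int_{|y| \le r} |y|^2\, \nu(z, dy) \le M r^2 \nu(z, \{|y| > r\})$. Finitely many iterations yield $\nu(z, \{|y| > f(t)/c\}) \le C(c, C, M)\, \nu(z, \{|y| > C f(t)\})$ uniformly for $|z - x| \le R$; taking the infimum over $|z - x| \le C f(t)$ (valid once $C f(t) \le R$) and integrating in $t$ completes the argument.

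\emph{Main obstacle.} The delicate point is the residual set $E := \{t : \Phi(t) > 1/2\}$ in part (i), on which $\mbb{E}^{z_0}[t \wedge \tau]$ need not be of order $t$. The hypothesis forces $\int_E dt/t < \infty$ but permits $E$ to accumulate at $0$. Absorbing the remainder $\int_E \inf_z \nu(z, \{|y| > 10 f(t)\})\, dt$ into the hypothesis integral requires combining the $dt/t$-smallness of $E$ with the crude bound $\nu(z, \{|y| > r\}) \le C_z/r^2$ (from $\int (|y|^2 \wedge 1)\, \nu(z, dy) < \infty$, locally uniform in $z$ since the process is L\'evy-type), together with a dyadic decomposition adapted to the growth of $f$.
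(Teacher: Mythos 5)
Your Part (ii) is essentially correct and follows the same mathematical content as the paper, just arranged differently: where you split $\re q(z,\xi)=\int(1-\cos(y\cdot\xi))\,\nu(z,dy)$ at $|y|=1/|\xi|$ and then use the doubling consequence of \eqref{A1'} to move between radii, the paper invokes Remark~\ref{p-3} (which already packages $\sup_{|\xi|\leq 1/r}\re q(z,\xi)\asymp h(z,r)$ and the equivalence $\eqref{A1}\iff\liminf G/h>0$) and then adjusts the radius via the subadditivity of $\xi\mapsto\sqrt{\re q(z,\xi)}$ rather than via a doubling of $\nu$. Both routes yield $\inf_{|z-x|\leq Cf(t)}\sup_{|\xi|\leq c/f(t)}\re q(z,\xi)\leq K\,G(x,Cf(t))$ for $t$ small, uniformly over the admissible $z$. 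One small point you should make explicit: \eqref{A1'} is a $\limsup$ as $r\to 0$, so the doubling bound $\nu(z,\{|y|>r/2\})\leq(1+4M)\nu(z,\{|y|>r\})$ and the estimate $\int_{|y|\leq r}|y|^2\nu(z,dy)\leq Mr^2\nu(z,\{|y|>r\})$ only hold for $r$ below some threshold; for the remaining $t$-range near $1$ one appeals to the local boundedness of the symbol, which is what the paper does implicitly.

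For Part (i) there is a genuine gap, and you have correctly identified where it lies, but the fix you sketch does not close it. The estimate $\inf_{|z-x|\leq 10f(t)}\nu(z,\{|y|>10f(t)\})\leq C/f(t)^2$ combined with $\int_E dt/t<\infty$ does not control $\int_E dt/f(t)^2$: the hypothesis places no lower bound on $f$ relative to $\sqrt{t}$, so if $f(t)\ll\sqrt{t}$ on the set $E=\{\Phi>1/2\}$ the residual integral may diverge even though $E$ has small $dt/t$-measure, and a dyadic decomposition adapted to $f$ cannot repair this without additional hypotheses on $f$ that are not part of the statement. The paper sidesteps the residual set entirely by a change of viewpoint: instead of working with the integrand $\Phi(t)$ pointwise, it first feeds the hypothesis into Theorem~\ref{up-4} to obtain the \emph{almost sure} bound $\limsup_{t\to 0}\tfrac{1}{f(t)}\sup_{s\leq t}|X_s-x|\leq 4$ $\mbb{P}^x$-a.s., from which $\mbb{P}^x(A_k)\to 1$ for $A_k=\{\forall t\leq 1/k:\sup_{s\leq t}|X_s-x|<5f(t)\}$, and hence
\begin{equation*}
	\sup_{t\leq 1/k}\mbb{P}^x\!\left(\sup_{s\leq t}|X_s-x|\geq 5f(t)\right)\leq\mbb{P}^x(A_k^c)\xrightarrow[k\to\infty]{}0.
\end{equation*}
This gives a \emph{uniform} bound $\leq 1/2$ on the whole interval $(0,1/k]$ for $k$ large, so Corollary~\ref{max-5} applies for all small $t$ and no exceptional set remains. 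Your Dynkin-formula derivation is essentially a by-hand version of Proposition~\ref{max-1} and Corollary~\ref{max-5}, so the efficient way to complete your argument is: keep your Step leading to $\mbb{P}^{x}(\tau\leq t)\geq\mbb{E}^x[t\wedge\tau]\cdot\inf_{|z-x|\leq 10f(t)}\nu(z,\{|y|>10f(t)\})$, but replace the pointwise condition ``$\Phi(t)\leq 1/2$'' by the uniform bound extracted from Theorem~\ref{up-4} as above; then $\mbb{E}^x[t\wedge\tau]\geq t/2$ holds for all $t\leq 1/k$ and the rest of your computation goes through.
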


The next result gives a lower bound for the growth of the sample paths of a L\'evy-type process.

\begin{proposition} \label{main-8}
	Let $(X_t)_{t \geq 0}$ be a L\'evy-type process with symbol $q$ satisfying the sector condition. Let $x \in \mbb{R}^d$. If $f:[0,1] \to (0,\infty)$ is a function such that
	\begin{equation}
		\limsup_{t \to 0} t \cdot \inf_{|z-x| \leq Rf(t)} \sup_{|\xi| \leq 1/(Cf(t))} \re q(z,\xi) = \infty,
		\label{main-eq17}
	\end{equation}
	for every $R\geq 1$ and some constant $C=C(R)>0$, then
	\begin{equation}
		\limsup_{t \to 0} \frac{1}{f(t)} \sup_{s \leq t} |X_s-x| =\infty \quad \text{$\mbb{P}^x$-a.s.}
		\label{main-eq18}
	\end{equation} 
	Moreover: \begin{enumerate}
		\item\label{main-8-i} If additionally $f$ is non-decreasing, then
		\begin{equation*}
			\limsup_{t \to 0} \frac{1}{f(t)} |X_t-x| =\infty \quad \text{$\mbb{P}^x$-a.s.}
		\end{equation*} 
		\item\label{main-8-ii} If $f$ is regularly varying at zero, then \eqref{main-eq18} holds under the milder assumption that \eqref{main-eq17} is satisfied for $R=1$.
	\end{enumerate}
\end{proposition}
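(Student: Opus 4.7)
The strategy is to combine the L\'evy-type maximal inequality from Section~\ref{max} with a direct application of (reverse) Fatou's lemma to indicator functions along a carefully chosen sequence of times. Taking the countable intersection over $A \in \mbb{N}$, it suffices to prove that $\mbb{P}^x \bigl( \limsup_{t \to 0} \tfrac{1}{f(t)} \sup_{s \leq t} |X_s - x| \geq A \bigr) = 1$ for every $A \geq 1$, which then yields \eqref{main-eq18}.

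By the hypothesis \eqref{main-eq17} applied with $R = A$, there exist $C = C(A) > 0$ and a sequence $t_n \downarrow 0$ with
\[
	t_n \cdot \inf_{|z - x| \leq A f(t_n)} \sup_{|\xi| \leq 1/(C f(t_n))} \re q(z, \xi) \longrightarrow \infty.
\]
The standard growth estimate $\re q(z, \lambda \xi) \leq 2(1 + \lambda^2) \re q(z, \xi)$ (valid uniformly in $z$, since $\xi \mapsto q(z, \xi)$ is continuous negative definite for each $z$) allows the replacement of the $\xi$-radius $1/(C f(t_n))$ by $1/(A f(t_n))$ at the cost of the fixed factor $[2(1 + (A/C)^2)]^{-1}$. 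With this substitution, the right-hand side of the maximal inequality from Section~\ref{max}, applied at the starting point $z = x$ and exit radius $r = A f(t_n)$, tends to $1$; that is, $\mbb{P}^x(E_n) \to 1$ where $E_n := \{\sup_{s \leq t_n} |X_s - x| \geq A f(t_n)\}$. Reverse Fatou applied to the bounded sequence $\I_{E_n}$ then gives $\mbb{P}^x(\limsup_n E_n) \geq \limsup_n \mbb{P}^x(E_n) = 1$, and since $\limsup_n E_n \subseteq \{\limsup_{t \to 0} \tfrac{1}{f(t)} \sup_{s \leq t} |X_s - x| \geq A\}$, the desired almost-sure statement follows.

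For assertion~\ref{main-8-i}, monotonicity of $f$ converts the supremum-based limsup into the pointwise one: on the a.s.\ event just constructed, for each $n$ pick $s_n \leq t_n$ with $|X_{s_n} - x| \geq (A - 1) f(t_n) \geq (A - 1) f(s_n)$, so that $\limsup_{s \to 0} |X_s - x|/f(s) \geq A - 1$ along $\{s_n\}$; let $A \to \infty$. For assertion~\ref{main-8-ii}, regular variation of $f$ at $0$ with index $\rho$ lets us recover the full hypothesis from the case $R = 1$: applying the $R = 1$ assumption at the rescaled time $s := R^{1/\rho} t$ and using $f(s)/f(t) \to R$ (with Potter's bounds in the slowly-varying case $\rho = 0$) transforms it into \eqref{main-eq17} at level $R$ with modified constant $C'(R) = C R$, after which the main argument above applies.

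\textbf{Main obstacle.} The crucial ingredient is the precise form of the maximal inequality in Section~\ref{max}: its denominator must involve an $\inf$-$\sup$ of $\re q$ with exactly the structure appearing in \eqref{main-eq17}, so that scaling of continuous negative definite functions alone bridges the gap between the two expressions. Once that inequality is at hand, the remainder is careful bookkeeping of $\xi$-radii and a one-line application of reverse Fatou; no Borel--Cantelli or zero-one law is needed because $\mbb{P}^x(E_n) \to 1$ is stronger than mere summability of its complement.
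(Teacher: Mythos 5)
Your argument is correct and follows essentially the same route as the paper: the exit-time maximal inequality \eqref{max-eq4}, combined with the subadditivity of $\xi\mapsto\sqrt{\re q(z,\xi)}$ to free the constant $C$, yields $\mbb{P}^x\bigl(\sup_{s\le t_k}|X_s-x|\ge Rf(t_k)\bigr)\to 1$ along a sequence $t_k\downarrow 0$; reverse Fatou then upgrades this to an almost-sure $\limsup\geq R$, and parts~\ref{main-8-i} and~\ref{main-8-ii} are treated by monotonicity and by a time-rescaling under regular variation, exactly as in the paper's proof. The only (immaterial) slip is the aside about Potter's bounds: these give an \emph{upper} bound on ratios of a slowly varying function and therefore cannot produce $f(s)/f(t)\geq R>1$ when the index is zero; this is harmless because the paper's proof of \ref{main-8-ii} likewise presupposes regular variation with positive index $\beta>0$, so the slowly-varying case never arises.
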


In \cite[Theorem 4.3]{rs-growth}, this result was shown for power functions $f(t)=t^{\kappa}$ but in fact the proof goes through for arbitrary functions $f$, see Section~\ref{conv}. The following statement is an immediate consequence of Proposition~\ref{main-8}: If $f$ is a non-negative function and $q$ the symbol of a L\'evy-type process $(X_t)_{t \geq 0}$ such that
\begin{equation*}
	\limsup_{t \to 0} t^{-\beta/\alpha} f(t)<\infty 
	\quad \text{and} 
	\quad \inf_{|z-x| \leq r} \re q(z,\xi) \geq c |\xi|^{\alpha} 
	\quad \text{for all} \, \, |\xi| \gg 1
\end{equation*}
for some $x \in \mbb{R}^d$, $r>0$, $\alpha>0$, $c>0$ and $\beta>1$, then
\begin{equation*}
	\limsup_{t \to 0} \frac{1}{f(t)} \sup_{s \leq t} |X_s-x| = \infty \quad \text{$\mbb{P}^x$-a.s.}
\end{equation*}

Our final main result gives an integral criterion for a function $f$ \emph{not} to be an upper function of a L\'evy-type process.

\begin{theorem} \label{main-9}
	Let $(X_t)_{t \geq 0}$ be a L\'evy-type process with characteristics $(b(x),0,\nu(x,dy))$ and symbol $q$, and let $f:[0,1] \to (0,\infty)$ be non-decreasing. Let $x \in \mbb{R}^d$ be such that one of the following conditions holds. 
	\begin{enumerate}[label*=\upshape (C\arabic*),ref=\text{\upshape (C\arabic*)}]
		\item\label{C1} $q$ satisfies the sector condition and there is $\kappa \in [0,1)$ such that 
		\begin{equation}
			\sup_{|z-x| \leq f(t)} \sup_{|\xi| \leq 1/f(t)} |q(z,\xi)|
			\leq c t^{-\kappa} \inf_{|z-x| \leq R f(t)} \sup_{|\xi| \leq 1/f(t)} |q(z,\xi)|, \qquad t \in (0,1),
			\label{main-eq185}
		\end{equation}
		for every $R \geq 1$ and some constant $c=c(R)>0$.
		\item\label{C2} There are constants $\alpha \in (0,2]$, $r>0$ and $c>0$ such that
		\begin{equation}
			\sup_{|z-x| \leq r} |q(z,\xi)| \leq c(1+|\xi|^{\alpha}), \qquad |\xi| \gg 1,
		\label{main-eq19}
		\end{equation}
		and $\liminf_{t \to 0} t^{-2/\alpha} f(t)=\infty$.
	\end{enumerate}
	Then: \begin{enumerate}
		\item\label{main-9-i} If 
		\begin{equation}
			\int_{0}^1 \inf_{|z-x| \leq C f(t)} \nu(z,\{|y| \geq C f(t)\}) \, dt = \infty \label{main-eq20}
		\end{equation}
		for some constant $C>0$, then
		\begin{equation*}
			\limsup_{t \to 0} \frac{1}{f(t)} |X_t-x| \geq \frac{C}{5} \quad \text{$\mbb{P}^x$-a.s.}
		\end{equation*}
		\item\label{main-9-ii} Assume that \eqref{A1'} from Theorem~\ref{main-5} is satisfied for some $R>0$. If 
		\begin{equation}
			\int_0^1 \inf_{|z-x| \leq C f(t)} \sup_{|\xi| \leq 1/f(t)} |q(z,\xi)| \, dt = \infty \label{main-eq21}
		\end{equation}
		for some constant $C>0$, then
		\begin{equation*}
			\limsup_{t \to 0} \frac{1}{f(t)} |X_t-x| \geq \frac{C}{5} \quad \text{$\mbb{P}^x$-a.s.}
		\end{equation*}
	\end{enumerate}
\end{theorem}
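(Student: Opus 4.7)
The plan is to prove (\ref{main-9-i}) first and then deduce (\ref{main-9-ii}) from it. For the latter, under (\ref{A1'}) one can argue, analogously to (\ref{main-eq7}) in the L\'evy case, that pointwise in $z$ near $x$ the quantity $\sup_{|\xi| \leq 1/f(t)} \re q(z,\xi)$ is comparable to $\nu\bigl(z,\{|y| \geq C f(t)\}\bigr)$ for a suitable $C>0$; under the sector condition in (\ref{C1}), or directly using the polynomial control in (\ref{C2}), this upgrades to $|q|$ in place of $\re q$. Hence divergence of (\ref{main-eq21}) entails divergence of (\ref{main-eq20}) with a modified constant, and the conclusion of (\ref{main-9-ii}) follows from (\ref{main-9-i}).

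To prove (\ref{main-9-i}), work along the dyadic skeleton $t_n := 2^{-n}$ with slabs $I_n := (t_{n+1}, t_n]$. The argument has two ingredients. Firstly, the process makes infinitely many jumps of size $\geq C f(t_n)$ in the slabs $I_n$: applying the strong Markov property at $t_{n+1}$ together with the fact that the jumps of the L\'evy-type process are governed by the compensator $\nu(X_{s-}, dy)\,ds$, one obtains, on $\{|X_{t_{n+1}} - x| \leq C f(t_n)\}$,
\[\mbb{P}^x\!\Bigl(\exists s \in I_n : |\Delta X_s| \geq C f(t_n)\,\Bigm|\, \mc{F}_{t_{n+1}}\Bigr) \gtrsim (t_n - t_{n+1}) \inf_{|z-x| \leq C f(t_n)} \nu\bigl(z, \{|y| \geq C f(t_n)\}\bigr).\]
The right-hand sides sum (up to constants) to the divergent integral (\ref{main-eq20}), so a conditional Borel--Cantelli lemma in the backward filtration $\mc{G}_n := \sigma(X_s : s \geq t_{n+1})$ (the tool announced in the introduction) produces big jumps in infinitely many slabs, $\mbb{P}^x$-a.s.

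Secondly, the overall motion of $X$ is controlled by the maximal inequality of Section~\ref{max}, which in the form
\[\mbb{P}^x\!\Bigl(\sup_{s \leq t}|X_s - x| \geq r\Bigr) \lesssim t \cdot \sup_{|w-x| \leq r} \sup_{|\xi| \leq 1/r} |q(w,\xi)|\]
is the second key input. With $t = t_n$ and $r = (C/5)f(t_n)$, (\ref{C1}) bounds the right-hand side by $c\,t_n^{1-\kappa}$ times an inf-version of the symbol (and $\kappa<1$ gives a summable tail), while under (\ref{C2}) combined with $\liminf_{t\to 0} t^{-2/\alpha} f(t) = \infty$ the right-hand side is directly summable in $n$. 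Borel--Cantelli then forces $\sup_{s \leq t_n} |X_s - x| \leq (C/5)f(t_n)$ for all but finitely many $n$, which both verifies the side condition used above and implies, at any big-jump time $\tau \in I_n$, $|X_\tau - x| \geq C f(t_n) - (C/5) f(t_n) = (4C/5) f(t_n)$. Monotonicity of $f$ yields $|X_\tau - x|/f(\tau) \geq 4C/5$, hence $\limsup_{t \to 0} |X_t - x|/f(t) \geq C/5$ (the extra factor $1/5$ absorbs the loss from passing from the random time $\tau$ to a deterministic one, together with a control of the small-motion increment on the remaining part of the slab).

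The main obstacle is the conditional Borel--Cantelli step: the events ``big jump in $I_n$'' live in different pieces of a filtration whose index accumulates at $0$, so the usual forward version cannot be invoked directly, and the backward-filtration variant has to be meshed with the strong Markov property and with the small-motion event $\{|X_{t_{n+1}} - x| \leq C f(t_n)\}$ to make the inf-style lower bound on the jump intensity usable. A secondary delicate point is that the budget $\kappa < 1$ in (\ref{C1}) is exactly what makes the maximal-inequality tails summable while keeping the big-jump sum divergent; a similar balance has to be struck in the (\ref{C2}) regime via the assumption $\liminf t^{-2/\alpha} f(t) = \infty$.
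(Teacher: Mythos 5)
Your high-level plan overlaps the paper's in several tools — the conditional Borel--Cantelli lemma for backward filtrations, the Markov property at the left endpoints of time slabs, and the maximal inequality from Section~\ref{max} — and your reduction of \ref{main-9-ii} to \ref{main-9-i} via \eqref{A1'} is essentially how the paper proceeds (it invokes Proposition~\ref{main-7}\ref{main-7-ii}). However, the heart of the argument, the proof of \ref{main-9-i}, has two genuine problems.

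\textbf{The dyadic time scale is too coarse.} The paper works along the \emph{harmonic} sequence $t_n = 1/n$ with slabs $[\tfrac{1}{n+1},\tfrac{1}{n})$, whose length is $\approx 1/n^2$; this quadratic dependence is built into hypothesis \eqref{conv-eq11} of Proposition~\ref{conv-3} and is exactly what the budgets $\kappa<1$ (in \ref{C1}) and $\liminf t^{-2/\alpha}f(t)=\infty$ (in \ref{C2}) are designed to match. With your dyadic slabs $(2^{-(n+1)},2^{-n}]$ the slab length is $\approx 2^{-n}$, and the maximal inequality gives, under \ref{C2} with $r\asymp f(t_n)$, a bound of order $t_n\,f(t_n)^{-\alpha}$. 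But $\liminf t^{-2/\alpha}f(t)=\infty$ only says $f(t)^\alpha\gg t^2$, so this bound is $\ll 1/t_n \to \infty$; it is not even bounded, let alone summable. The estimate simply does not close on the dyadic grid. You would need the much stronger condition $f(t)^\alpha\gg t$ to make dyadic slabs work, which is not what the theorem assumes.

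\textbf{The second Borel--Cantelli application is both unjustified and unnecessary.} You want $\sup_{s\le t_n}|X_s-x|\le (C/5)f(t_n)$ eventually, which would require $\sum_n \mbb{P}^x(\sup_{s\le t_n}|X_s-x|>(C/5)f(t_n))<\infty$. Nothing in \ref{C1}/\ref{C2} gives summability; even on the harmonic grid you only get boundedness away from $1$. More importantly, the paper's mechanism does not need such a statement: Corollary~\ref{max-5} converts an upper bound $\mbb{P}^z(\sup_{s\le t}|X_s-z|>r)\le 1-\eps$ into a lower bound $\mbb{P}^z(\sup_{s\le t}|X_s-z|>r)\ge \eps\, t\, G(z,2r)$ in a single step. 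That is the pivot. Once you have it, the conditional expectation $\mbb{E}^x(\I_{A_n}\mid\mc{F}_{t_{n+1}})$ is bounded below \emph{directly}, and one application of the backward Borel--Cantelli lemma (Proposition~\ref{appendix-1}) finishes. Your ``jumps are governed by the compensator $\nu(X_{s-},dy)\,ds$'' heuristic is the right intuition for the lower bound, but it is not a proof: the jump rate is evaluated along the (unknown) path, so controlling $\inf_{|z-x|\le Cf(t_n)}\nu(z,\cdot)$ requires that the path stay near $x$, which is precisely what Corollary~\ref{max-5} handles without circularity. Finally, the explanation of the factor $1/5$ as ``absorbing the loss from a random to a deterministic time'' is not how the constant arises; it comes from two triangle-inequality losses (translating balls from $z$ to $x$ and the doubling $r\mapsto 2r$ in Proposition~\ref{max-1}), already traced explicitly in Proposition~\ref{conv-3}.

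In short: replace the dyadic slabs by the harmonic ones, drop the second Borel--Cantelli step, and replace the compensator heuristic by the quantitative lower bound of Corollary~\ref{max-5}; then the argument becomes the paper's Proposition~\ref{conv-3}, from which Theorem~\ref{main-9}\ref{main-9-i} follows by verifying \eqref{conv-eq11} separately under \ref{C1} (with the extra dichotomy handled by Proposition~\ref{main-8}) and under \ref{C2}.
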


\begin{remark}\label{main-10}\begin{enumerate}[wide, labelwidth=!, labelindent=0pt]
	\item\label{main-10-iii} If the constant $C$ in \eqref{main-eq20} resp.\ \eqref{main-eq21} can be chosen arbitrarily large, then
		\begin{equation*}
			\limsup_{t \to 0} \frac{1}{f(t)} |X_t-x|=\infty \quad \text{$\mbb{P}^x$-a.s.}
		\end{equation*}	
	\item\label{main-10-i} The growth condition \eqref{main-eq19} on the symbol holds automatically for $\alpha=2$, cf.\ \eqref{def-eq13}.
	\item\label{main-10-ii} If $f$ is regularly varying, then it suffices to check \eqref{main-eq185} in \ref{C1} for $R=1$. Moreover, we note that \eqref{main-eq185} is trivially satisfied if the symbol $q$ does not depend on $z$, i.e.\ if $(X_t)_{t \geq 0}$ is a L\'evy process.
	\item\label{main-10-iv} For the particular case of L\'evy processes, Theorem~\ref{main-9} is known -- see Theorem~\ref{main-3} and the references below it -- but Theorem~\ref{main-9} seems to be the first result in this direction which applies for the much wider class of L\'evy-type processes. Let us comment on the differences in the proofs. For L\'evy processes, the standard approach to prove an assertion of the form
	\begin{equation*}
		\limsup_{t \to 0} \frac{1}{f(t)} |X_t| \geq C \quad \text{a.s.}
	\end{equation*}
	is to construct a suitable sequence $(A_n)_{n \in \mbb{N}}$ of sets which involves only the increments of $(X_t)_{t \geq 0}$ and which satisfies
	\begin{equation*}
		\limsup_{n \to \infty} A_n \subseteq \left\{ \limsup_{t \to 0} \frac{1}{f(t)} |X_t| \geq C \right\},
	\end{equation*}
	and to use (the difficult direction of) the Borel--Cantelli lemma to deduce from $\sum_{n \in \mbb{N}} \mbb{P}(A_n)=\infty$ that $\mbb{P}(\limsup_n A_n)=1$. This approach relies heavily on the independence of the increments -- ensuring the sets $(A_n)_{n \in \mbb{N}}$ are independent -- and so it fails to work in the more general framework of L\'evy-type processes. We fix this issue by using a conditional Borel-Cantelli lemma for backward filtrations, cf.\ Proposition~\ref{appendix-1}. Moreover, our proof uses a new maximal inequality for L\'evy-type processes which is of independent interest, cf.\ Section~\ref{max}.
\end{enumerate} \end{remark}

We close this section with some illustrating examples.

\begin{example}[Process of variable order] \label{main-11} 
	Let $(X_t)_{t \geq 0}$ be a L\'evy-type process with symbol $q(x,\xi)=|\xi|^{\alpha(x)}$ for $\alpha: \mbb{R}^d \to (0,2)$ continuous; a sufficient condition for the existence of such a process is that $\alpha$ is H\"older continuous and bounded away from zero, see e.g.\ \cite{bass,kol,matters} for details. Let us mention that Negoro \cite{negoro94} was one of the first to study the small-time asymptotics of processes of variable order. If we set $\alpha^*(x,r):=\sup_{|z-x| \leq r} \alpha(z)$ and $\alpha_*(x,r) := \inf_{|z-x| \leq r} \alpha(z)$, then our results show that
	\begin{equation}
		\int_0^1 |f(t)|^{-\alpha^*(x,r)} \, dt < \infty \, \, \text{for some $r>0$} \implies \limsup_{t \to 0} \frac{1}{f(t)} \sup_{s \leq t} |X_s-x| =0 \quad \text{$\mbb{P}^x$-a.s.}
		\label{main-eq25}
	\end{equation}
	and 
	\begin{equation}
		\int_0^1 |f(t)|^{-\alpha_*(x,r)} \, dt = \infty \, \, \text{for some $r>0$} \implies \limsup_{t \to 0} \frac{1}{f(t)} |X_t-x| =\infty \quad \text{$\mbb{P}^x$-a.s.} \label{main-eq26}
	\end{equation}
	for any $f:[0,1] \to [0,\infty)$ non-decreasing. By the continuity of $\alpha$, this entails that
	\begin{equation*}
		\int_0^1 |f(t)|^{-\beta} \, dt < \infty \, \, \text{for some $\beta>\alpha(x)$} \implies \limsup_{t \to 0} \frac{1}{f(t)} \sup_{s \leq t} |X_s-x| =0 \quad \text{$\mbb{P}^x$-a.s.}
	\end{equation*}
	and
	\begin{equation*}
		\int_0^1 |f(t)|^{-\beta} \, dt = \infty \, \, \text{for some $\beta<\alpha(x)$} \implies \limsup_{t \to 0} \frac{1}{f(t)}  |X_t-x| =\infty \quad \text{$\mbb{P}^x$-a.s.}
	\end{equation*}
	In particular, this generalizes \cite[Theorem 2.1]{negoro94}, which is about the particular case that $f$ is a power function. If $\alpha$ has a local maximum at $x$, then \eqref{main-eq25} yields
		\begin{equation*}
			\int_0^1 |f(t)|^{-\alpha(x)} \, dt < \infty \implies \limsup_{t \to 0} \frac{1}{f(t)} \sup_{s \leq t} |X_s-x| =0 \quad \text{$\mbb{P}^x$-a.s.}
		\end{equation*}
	This holds, in particular, if $\alpha(x)=\alpha$ is constant, i.e.\ if $(X_t)_{t \geq 0}$ is an isotropic $\alpha$-stable L\'evy process. An analogous consideration works for \eqref{main-eq26} if $\alpha$ has a local minimum at $x$. In particular, we recover the classical criterion for isotropic $\alpha$-stable L\'evy processes, cf.\ Example~\ref{main-45}.
\end{example}

\begin{example}[Stable-type process] \label{main-12}
	Consider a L\'evy-type process $(X_t)_{t \geq 0}$ with characteristics $(0,0,\nu(x,dy))$, where
	\begin{equation*}
		\nu(x,dy) = \kappa(x,y) \frac{1}{|y|^{d+\alpha}} \,dy
	\end{equation*}
	for some $\alpha \in (0,2)$ and a mapping $\kappa: \mbb{R}^d \times \mbb{R} \to (0,\infty)$ which is symmetric in the $y$-variable and satisfies $0<\inf_{x,y} \kappa(x,y)\leq\sup_{x,y} \kappa(x,y) < \infty$, see e.g.\ \cite{bass09,matters} for the existence of such processes. Since
	\begin{equation*}
		\frac{1}{M} r^{-\alpha} \leq \nu(x,\{|y| \geq r\}) \leq M r^{-\alpha}, \qquad r>0,
	\end{equation*}
	for a constant $M>0$ not depending on $x \in \mbb{R}^d$, it follows from Theorem~\ref{main-5} and Theorem~\ref{main-9} that
	\begin{equation*}
		\limsup_{t \to 0} \frac{1}{f(t)} |X_t-x| = \begin{rcases}
	    \begin{dcases}0 \\ \infty \end{dcases} \end{rcases} \, \, \text{$\mbb{P}^x$-a.s.} 
		\quad \text{according as} \quad
		\int_0^1 |f(t)|^{-\alpha} \, dt \begin{rcases}
	    \begin{dcases}< \infty \\ = \infty \end{dcases} \end{rcases}
	\end{equation*}
	for any non-decreasing function $f:[0,1] \to [0,\infty)$.
\end{example}

\begin{example}[{L\'evy-driven SDE}] \label{main-13}
	Let $(L_t)_{t \geq 0}$ be a pure-jump L\'evy process with characteristic exponent $\psi$ satisfying the sector condition, and assume that the L\'evy measure $\nu_L$ satisfies \eqref{A1} from Theorem~\ref{main-3}. Let $(X_t)_{t \geq 0}$ be the unique weak solution to an SDE
	\begin{equation*}
		dX_t = \sigma(X_{t-}) \, dL_t, \qquad X_0 = x,
	\end{equation*}
	for a bounded continuous function $\sigma: \mbb{R} \to \mbb{R}$. Then $(X_t)_{t \geq 0}$ is a L\'evy-type process with symbol $q(x,\xi) := \psi(\sigma(x) \xi)$, cf.\ \cite{kurtz,sde,schnurr10}. Fix $x \in \mbb{R}$ such that $\sigma(x) \neq 0$. By Theorem~\ref{main-5} and Theorem~\ref{main-9}, the following statements hold for any non-decreasing function  $f:[0,1] \to [0,\infty)$. \begin{enumerate}
		\item\label{main-13-i}  If there exists a constant $c>0$ such that \begin{equation*}
			\int_0^1 \nu_L(\{|y| \geq cf(t)\}) \, dt < \infty,
		\end{equation*}
		then
		\begin{equation*}
			\limsup_{t \to 0} \frac{1}{f(t)} \sup_{s \leq t} |X_s-x| = 0 \quad \text{and} \quad
			\limsup_{t \to 0} \frac{1}{f(t)} \sup_{s \leq t} |L_s| = 0.
		\end{equation*}
		\item\label{main-13-ii} Assume that $\psi^*(r):=\sup_{|\xi| \leq r} |\psi(\xi)|$ satisfies the following weak scaling condition (at zero): There are constants $\alpha>0$ and $C>0$ such that
		\begin{equation*}
			\psi^*(\lambda r) \geq C \lambda^{\alpha} \psi^*(r) \fa r>0,\;\lambda \in (0,1).
		\end{equation*}
		If
		\begin{equation*}
			\int_0^1 \nu_L(\{|y| \geq c f(t)\}) \, dt = \infty
		\end{equation*}
		for some constant $c>0$, then
		\begin{equation*}
			\limsup_{t \to 0} \frac{1}{f(t)} \sup_{s \leq t} |X_s-x| > 0 \quad \text{and} \quad
			\limsup_{t \to 0} \frac{1}{f(t)} \sup_{s \leq t} |L_s| > 0.
		\end{equation*}
	\end{enumerate}
\end{example}

The remainder of the article is organized as follows. After introducing basic definitions and notation in Section~\ref{def}, we establish a new maximal inequality for L\'evy-type processes in Section~\ref{max} and study some of its consequences. In Section~\ref{up} we obtain integral criteria for upper functions of Markov processes. They are the key for the proofs of Theorem~\ref{main-3} and Theorem~\ref{main-5}, which are presented in Section~\ref{p}. Finally, in Section~\ref{conv}, we give the proofs of Proposition~\ref{main-7}, Proposition~\ref{main-8} and Theorem~\ref{main-9}.

\section{Basic definitions and notation} \label{def}

We consider the Euclidean space $\mbb{R}^d$ with the canonical scalar product $x \cdot y := \sum_{j=1}^d x_j y_j$ and the Borel $\sigma$-algebra $\mc{B}(\mbb{R}^d)$ generated by the open balls $B(x,r) := \{y \in \mbb{R}^d; |y-x|<r\}$. For a real-valued function $f$, we denote by $\nabla f$ the gradient and by $\nabla^2 f$ the Hessian of $f$. If $\nu$ is a measure, say on $\mbb{R}^d$, we use the short-hand $\nu(\{|y|>r\})$ for $\nu(\{y \in \mbb{R}^d; |y|>r\})$. \par

An operator $A$ defined on the space $C_c^{\infty}(\mbb{R}^d)$ of compactly supported smooth functions is a \emph{L\'evy-type operator} if it has a representation of the form
\begin{align} \label{def-eq3} \begin{aligned}
	Af(x) 
	&= b(x) \cdot \nabla f(x) + \frac{1}{2} \tr(Q(x) \cdot \nabla^2 f(x)) \\
	&\quad + \int_{y \neq 0} (f(x+y)-f(x)-y \cdot \nabla f(x) \I_{(0,1)}(|y|)) \, \nu(x,dy), \qquad  f\in C_c^{\infty}(\mbb{R}^d), 
\end{aligned}
\end{align}
where $b(x) \in \mbb{R}^d$ is a vector, $Q(x) \in \mbb{R}^{d \times d}$ is a positive semi-definite matrix and $\nu(x,dy)$ is a measure on $\mbb{R}^d \setminus \{0\}$ satisfying $\int_{y \neq 0} \min\{1,|y|^2\} \, \nu(x,dy) < \infty$ for each $x \in \mbb{R}^d$. The family $(b(x),Q(x),\nu(x,dy))$, $x \in \mbb{R}^d$, is the \emph{(infinitesimal) characteristics} of $A$. Equivalently, $A$ can be written as a pseudo-differential operator
\begin{equation*}
	Af(x) = - \int_{\mbb{R}^d} q(x,\xi) e^{ix \cdot \xi} \hat{f}(\xi) \, d\xi, \qquad f \in C_c^{\infty}(\mbb{R}^d),\; x \in \mbb{R}^d,
\end{equation*}
with \emph{symbol} 
\begin{equation*}
	q(x,\xi) := -i b(x) \cdot \xi + \frac{1}{2} \xi \cdot Q(x) \xi + \int_{y \neq 0} \left(1-e^{iy \cdot \xi}+iy \cdot \xi \I_{(0,1)}(|y|) \right) \, \nu(x,dy), \qquad x,\xi \in \mbb{R}^d.
\end{equation*}
For each fixed $x \in \mbb{R}^d$, the mapping $\xi \mapsto q(x,\xi)$ is continuous and negative definite (in the sense of Schoenberg). In consequence, $\xi \mapsto \sqrt{|q(x,\xi)|}$ is subadditive, i.e.
\begin{equation}
	\sqrt{|q(x,\xi+\eta)|} \leq \sqrt{|q(x,\xi)|} + \sqrt{|q(x,\eta)|}, \qquad x,\xi,\eta \in \mbb{R}^d,
	\label{def-eq11}
\end{equation}
which implies
\begin{equation*}
	|q(x,\xi)| \leq 2 \sup_{|\eta| \leq 1} |q(x,\eta)|  (1+|\xi|^2), \qquad x,\xi \in \mbb{R}^d,
\end{equation*}
see e.g.\ \cite[Theorem 6.2]{barca}. In particular, $(x,\xi) \mapsto q(x,\xi)$ is locally bounded if, and only if, there is for every compact set $K \subseteq \mbb{R}^d$ some constant $C>0$ such that \begin{equation}
	|q(x,\xi)| \leq C(1+|\xi|^2), \qquad \xi \in \mbb{R}^d,\; x \in K. \label{def-eq13}
\end{equation}
The local boundedness of $q$ can also be characterized in terms of the characteristics; namely, $q$ is locally bounded if, and only if,
\begin{equation*}
	\forall K \subseteq \mbb{R}^d \, \, \text{compact}\::\: \sup_{x \in K} \left( |b(x)|+|Q(x)| +\int_{y \neq 0} \min\{1,|y|^2\} \nu(x,dy) \right)<\infty.
\end{equation*}
Applying Taylor's formula in \eqref{def-eq3} shows that the local boundedness of the symbol $q$ of the L\'evy-type operator $A$ implies $\|Af\|_{\infty}<\infty$ for every $f \in C_c^{\infty}(\mbb{R}^d)$. We say that $q$ satisfies the \emph{sector condition} if there is a constant $C>0$ such that \begin{equation}
	|\im q(x,\xi)| \leq C \re q(x,\xi) \fa x,\xi \in \mbb{R}^d. \label{def-eq14}
\end{equation}
Next we introduce the probabilistic objects. Let $A$ be a L\'evy-type operator and $(\Omega,\mc{A},\mbb{P})$ a probability space. A stochastic process $X_t: \Omega \to \mbb{R}^d$, $t \geq 0$, with \cadlag sample paths is a \emph{solution to the $(A,C_c^{\infty}(\mbb{R}^d))$-martingale problem with initial distribution $\mu$} if $\mbb{P}(X_0 \in \cdot)=\mu(\cdot)$ and
\begin{equation*}
	M_t^f := f(X_t)-f(X_0)- \int_0^t Af(X_s) \, ds, \qquad t \geq 0,
\end{equation*}
is a martingale with respect to the canonical filtration $\mc{F}_t := \sigma(X_s; s \leq t)$ for every $f \in C_c^{\infty}(\mbb{R}^d)$. A tuple $(X_t, t \geq 0; \mbb{P}^x, x \in \mbb{R}^d)$ consisting of a family of probability measures $\mbb{P}^x$, $x \in \mbb{R}^d$, on a measurable space $(\Omega,\mc{A})$ and a stochastic process $X_t: \Omega \to \mbb{R}^d$ with \cadlag sample paths is called a  \emph{L\'evy-type process with symbol $q$} if
\begin{enumerate}
	\item\label{ltp-1} $(X_t,t \geq 0; \mbb{P}^x, x \in \mbb{R}^d)$ is a strong Markov process;
	\item\label{ltp-2} $(X_t)_{t \geq 0}$ solves the $(A,C_c^{\infty}(\mbb{R}^d))$-martingale problem for the L\'evy-type operator $A$ with symbol $q$. More precisely, for each $x \in \mbb{R}^d$, the stochastic process $(X_t)_{t \geq 0}$ considered on the probability space $(\Omega,\mc{A},\mbb{P}^x)$ is a solution to the $(A,C_c^{\infty}(\mbb{R}^d))$-martingale problem with initial distribution $\mu=\delta_x$;
	\item\label{ltp-3} $q$ is locally bounded.
\end{enumerate}
Note that \ref{ltp-3} entails that $Af$ is bounded for every $f \in C_c^{\infty}(\mbb{R}^d)$, and so the integral $\int_0^t Af(X_s) \, ds$ appearing in the definition of the martingale problem is well-defined. If the $(A,C_c^{\infty}(\mbb{R}^d))$-martingale problem is \emph{well-posed}, i.e.\ there exists a unique solution to the martingale problem for any initial distribution $\mu$, then the strong Markov property \ref{ltp-1} is automatically satisfied, cf.\ \cite[Theorem 4.4.2]{ethier}. Well-posedness is, however, not necessary for the existence of strongly Markovian solutions to martingale problems; one can use so-called Markovian selections to construct such solutions, see \cite[Section 4.5]{ethier} and \cite{markovian}. For a thorough discussion of martingale problems associated with L\'evy-type operators, we refer to \cite{ltp,hoh,jacob1}. The following classes of stochastic processes are examples of L\'evy-type processes: \begin{itemize}
	\item L\'evy processes: A \emph{L\'evy process} is a stochastic process $(X_t)_{t \geq 0}$ with stationary and independent increments and \cadlag sample paths. It is uniquely determined (in distribution) by its L\'evy triplet $(b,Q,\nu)$ and its characteristic exponent $\psi$, cf.\ \cite{sato,barca}. Any L\'evy process is a L\'evy-type process in the sense of the above definition; the corresponding operator $A$ is the pseudo-differential operator with symbol $q(x,\xi):=\psi(\xi)$ and characteristics $(b,Q,\nu)$.
	\item Feller processes: If $(X_t)_{t \geq 0}$ is a Feller process whose infinitesimal generator $(A,\mc{D}(A))$ satisfies $C_c^{\infty}(\mbb{R}^d) \subseteq \mc{D}(A)$, then $(X_t)_{t \geq 0}$ is a L\'evy-type operator; this follows from a result by Courr\`ege \& von Waldenfels, see \cite{ltp,jacob1,mp-feller} for further information.
	\item solutions to L\'evy-driven SDEs: Let $(L_t)_{t \geq 0}$ be a L\'evy process with characteristic exponent $\psi$. If $\sigma$ is a bounded continuous function and the stochastic differential equation (SDE)
	\begin{equation*}
		dX_t = \sigma(X_{t-}) dL_t, \qquad X_0 =x,
	\end{equation*}
	has a unique weak solution $(X_t)_{t \geq 0}$, then $(X_t)_{t \geq 0}$ is a L\'evy-type process with symbol $q(x,\xi)=\psi(\sigma(x)^T \xi)$, cf.\ \cite{ethier,schnurr10}. The assumptions on $\sigma$ can be relaxed, cf.\ \cite{sde,markovian}.
\end{itemize}

\section{A maximal inequality for L\'evy-type processes} \label{max}

In this section, we establish a new maximal inequality for L\'evy-type processes and present some consequences of this inequality. Before we start, we recall the following maximal inequality, which will be used frequently in this paper.

\begin{proposition} \label{max-0}
	Let $(X_t)_{t \geq 0}$ be a L\'evy-type process with symbol $q$. There is an absolute constant $c>0$ such that
	\begin{equation}
		\mbb{P}^x \left( \sup_{s \leq t} |X_s-x| \geq r \right) \leq ct \sup_{|z-x| \leq r} \sup_{|\xi| \leq 1/r} |q(z,\xi)|, \qquad x \in \mbb{R}^d,\;t>0,\;r>0. \label{max-eq0}
	\end{equation}
\end{proposition}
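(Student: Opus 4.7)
The natural route is to convert the sample-path event into an exit-time event. Setting $\tau_r := \inf\{s \geq 0 : |X_s - x| \geq r\}$, the c\`adl\`ag property together with the closedness of $\{y : |y - x| \geq r\}$ gives $\mbb{P}^x(\sup_{s \leq t}|X_s - x| \geq r) = \mbb{P}^x(\tau_r \leq t)$. The strategy is then to apply the martingale problem to a well-placed bump function and to stop at $\tau_r$.

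First I would fix a cutoff $\chi \in C_c^{\infty}(\mbb{R}^d)$ with $\chi(0) = 1$, $0 \leq \chi \leq 1$, and $\chi(y) = 0$ for $|y| \geq 1$, and then set $f_r(y) := \chi((y-x)/r) \in C_c^{\infty}(\mbb{R}^d)$. Thus $f_r(x) = 1$ and $f_r \equiv 0$ on $\{|y - x| \geq r\}$; in particular, $f_r(X_{\tau_r}) = 0$ on $\{\tau_r \leq t\}$. Optional stopping of the bounded martingale $M^{f_r}$ at the stopping time $t \wedge \tau_r$ yields
$$\mbb{E}^x[f_r(X_{t \wedge \tau_r})] - 1 = \mbb{E}^x\left[\int_0^{t \wedge \tau_r} A f_r(X_s)\,ds\right].$$
Since $0 \leq f_r \leq 1$, the left-hand side is $\leq -\mbb{P}^x(\tau_r \leq t)$, and since $X_s \in B(x,r)$ for all $s < \tau_r$, the right-hand side is bounded below by $-t \sup_{|z - x| \leq r}|A f_r(z)|$; the possible single point $s = \tau_r$ in the integral is irrelevant by Lebesgue-null considerations. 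This delivers
$$\mbb{P}^x(\tau_r \leq t) \leq t \sup_{|z - x| \leq r}|A f_r(z)|.$$

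Second, I would translate the right-hand side into the symbol via the Fourier representation of $A$. Writing $\widehat{f_r}(\xi) = r^d e^{-ix \cdot \xi} \widehat{\chi}(r\xi)$ and substituting $\eta = r \xi$ gives
$$|A f_r(z)| \leq \int_{\mbb{R}^d} |q(z, \eta/r)| \, |\widehat\chi(\eta)| \, d\eta.$$
The quadratic majorant $|q(z, \eta/r)| \leq 2(1 + |\eta|^2) \sup_{|\xi| \leq 1/r}|q(z,\xi)|$ (a direct consequence of the subadditivity of $\sqrt{|q(z,\cdot)|}$ from \eqref{def-eq11}, by decomposing $\eta/r$ into $\lceil |\eta| \rceil$ vectors of norm $\leq 1/r$) together with the integrability of $(1 + |\eta|^2)|\widehat\chi(\eta)|$ (as $\widehat\chi$ is Schwartz) yields
$$|A f_r(z)| \leq c_\chi \sup_{|\xi| \leq 1/r}|q(z, \xi)|,$$
with $c_\chi$ depending only on the fixed cutoff $\chi$. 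Combining with the previous display completes the proof with $c = c_\chi$, an absolute constant independent of $x$, $r$, $t$ and the particular L\'evy-type process.

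The only genuine subtlety is justifying optional stopping of $M^{f_r}$, which comes down to the boundedness of $f_r$ and $A f_r$; the first is automatic since $f_r \in C_c^{\infty}$, and the second holds because the symbol $q$ is locally bounded by the definition of a L\'evy-type process. Handling the possibly large jump of $X$ at $\tau_r$ is neatly sidestepped by the choice of bump that vanishes outside $B(x,r)$, so no precise information about the overshoot is needed. The Fourier estimate is standard and the subadditivity bound is a well-known feature of continuous negative definite symbols.
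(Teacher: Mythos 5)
Your proof is correct and reproduces the classical argument (bump function localized to $B(x,r)$, optional stopping via the martingale problem, and the Fourier/subadditivity bound on $Af_r$) from Schilling's 1998 paper, which the authors cite in lieu of a proof of Proposition~\ref{max-0}. The only nitpick is the claimed identity $\mbb{P}^x\bigl(\sup_{s\le t}|X_s-x|\ge r\bigr)=\mbb{P}^x(\tau_r\le t)$: for a c\`adl\`ag path the supremum can equal $r$ while being attained only through a left limit, so the left event can be strictly larger; this is harmless here, since one can run the argument with $\tau_{r'}$ for $r'<r$ and let $r'\uparrow r$, using continuity of $\xi\mapsto q(z,\xi)$, or use a cutoff $\I_{B(x,r)}\le\chi\le\I_{B(x,r+\eps)}$ and let $\eps\downarrow 0$ as in the paper's proof of Proposition~\ref{max-1}.
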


This maximal inequality goes back to Schilling \cite{rs-growth}, see also \cite[Theorem 5.1]{ltp} and \cite[Proposition 2.8]{markovian}. Let us mention two variants of this inequality: a version for random times, cf.\ \cite[Lemma 1.29]{matters}, and a localized version, cf.\ \cite[Lemma 4.1]{ihke}. If we denote by $\tau_r^x = \inf\{t \geq0; |X_t-x| \geq r\}$ the first exit time of $(X_t)_{t \geq 0}$ from the open ball $B(x,r)$, then \eqref{max-eq0} can be equivalently formulated as follows:
\begin{equation*}
	\mbb{P}^x(\tau_r^x \leq t) 
	\leq c t \sup_{|z-x| \leq r} \sup_{|\xi| \leq 1/r} |q(z,\xi)|, \qquad x \in \mbb{R}^d,\;t>0,\;r>0.
\end{equation*}

For the proofs of our main results, we need an upper bound for the probability
\begin{equation*}
	\mbb{P}^x \left( \sup_{s \leq t} |X_s-x| < r \right).
\end{equation*}
The following maximal inequality allows us to derive suitable bounds and is of independent interest.

\begin{proposition} \label{max-1}
	Let $(X_t)_{t \geq 0}$ be a L\'evy-type process with characteristics $(b(x),Q(x),\nu(x,dy))$, $x \in \mbb{R}^d$, and denote by $\tau_r^x$ the first exit time from $B(x,r)$. Then
	\begin{equation*}
		\mbb{P}^x(\tau_r^x \geq t) \leq \frac{1}{1+t G(x,2r)} \quad \text{with} \quad G(x,r) := \inf_{|z-x| \leq r} \nu(z, \{|y| > r\})
	\end{equation*}
	for all $x \in \mbb{R}^d$ and $r>0$.
\end{proposition}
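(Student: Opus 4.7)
The plan is to reduce the statement to a compensator argument for the ``big--jump'' counting process. Set $\lambda := G(x,2r)$ and $\tau := \tau_r^x$, and let
\begin{equation*}
N_t := \sum_{s \leq t} \I_{\{|\Delta X_s| > 2r\}}, \qquad T_1 := \inf\{s > 0 : |\Delta X_s| > 2r\}
\end{equation*}
count the jumps of $X$ of size exceeding $2r$. First I would note that any jump of size $>2r$ made from a point in $\overline{B(x,r)}$ must land outside $\overline{B(x,r)}$ by the reverse triangle inequality: $|X_{T_1}-x| \geq |\Delta X_{T_1}| - |X_{T_1 -}-x| > 2r-r = r$. Hence $\tau \leq T_1$ $\mbb{P}^x$-almost surely, and consequently $N_{\tau \wedge t} \in \{0,1\}$ with $N_{\tau \wedge t} \leq \I_{\{\tau \leq t\}}$.

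Next, as $(X_t)_{t \geq 0}$ is a semimartingale whose integer-valued jump measure has compensator $\nu(X_{s-},dy)\,ds$, the process $N_t - A_t$ is a local martingale, where
\begin{equation*}
A_t := \int_0^t \nu(X_{s-},\{|y|>2r\})\,ds.
\end{equation*}
For every $s \leq \tau$ one has $X_{s-} \in \overline{B(x,r)} \subseteq \overline{B(x,2r)}$, so by definition of $G(x,2r)$ the inequality $\nu(X_{s-},\{|y|>2r\}) \geq \lambda$ holds pointwise, giving $A_{\tau \wedge t} \geq \lambda (\tau \wedge t)$. Optional stopping at the bounded stopping time $\tau \wedge t$ (if needed via a localizing sequence for $N-A$ and monotone convergence on both sides, which is justified because $N$ and $A$ are non-decreasing) then yields $E^x[N_{\tau \wedge t}] = E^x[A_{\tau \wedge t}]$. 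Combining these three pieces,
\begin{equation*}
\lambda \, E^x[\tau \wedge t] \leq E^x[A_{\tau \wedge t}] = E^x[N_{\tau \wedge t}] \leq \mbb{P}^x(\tau \leq t).
\end{equation*}

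From $E^x[\tau \wedge t] \geq t \, \mbb{P}^x(\tau > t)$ and $\mbb{P}^x(\tau \leq t) = 1 - \mbb{P}^x(\tau > t)$, this rearranges immediately to $\mbb{P}^x(\tau > t)(1+\lambda t) \leq 1$. To upgrade from $\tau > t$ to $\tau \geq t$, I would write $\{\tau \geq t\} = \bigcap_{s<t}\{\tau > s\}$ and use continuity of measure from above to get $\mbb{P}^x(\tau \geq t) = \lim_{s \uparrow t} \mbb{P}^x(\tau > s) \leq \lim_{s \uparrow t} (1+\lambda s)^{-1} = (1+\lambda t)^{-1}$.

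The step I expect to be the main technical obstacle is the identification of the compensator of $N$ together with the justification of optional stopping, because the martingale problem is formulated here only against $C_c^\infty(\mbb{R}^d)$ test functions. This can either be handled by invoking the known semimartingale--characteristics representation of solutions to L\'evy-type martingale problems, or else by approximating $\I_{\{|y|>2r\}}$ by a sequence $f_n \in C_c^\infty(\mbb{R}^d \setminus \{0\})$ with $f_n \uparrow \I_{\{|y|>2r\}}$ and passing to the limit in Dynkin's formula applied to $z \mapsto \int f_n(y)\,\delta_y$--type test functions; the bounds on $(X_{s-})$ while $s \leq \tau$ keep everything controlled on $\overline{B(x,r)}$, so the limiting procedure is routine.
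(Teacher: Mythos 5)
Your proof is correct as mathematics, and it takes a genuinely different route from the paper. The paper works directly from the defining martingale problem: it picks a smooth cutoff $\chi$ with $\I_{B(x,r)}\leq\chi\leq\I_{B(x,r+\eps)}$, applies Dynkin's formula (the $(A,C_c^\infty)$-martingale property) to $\chi$ on $[0,\tau\wedge t]$, and bounds $A\chi(z)\leq -\int_{|y|\geq 2r+\eps}\nu(z,dy)$ for $z\in B(x,r)$; this already gives $\mbb{E}^x(\tau\wedge t)\inf_{|z-x|\leq r}\nu(z,\{|y|>2r\})\leq 1-\mbb{P}^x(\tau>t)$ and the rest is the same elementary rearrangement you use. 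Your argument, by contrast, reasons at the level of the jump random measure of $X$: each big jump from $\overline{B(x,r)}$ forces an exit, the big-jump intensity while in the ball is at least $G(x,2r)$, and optional stopping on $N-A$ closes the loop. The algebraic endgame (including the small improvement from $\tau>t$ to $\tau\geq t$) is identical in both proofs. What the paper's route buys is self-containedness: it uses only $C_c^\infty$ test functions, which is exactly what the martingale problem provides. Your route is cleaner conceptually but has an extra dependency --- that the integer-valued jump measure of a L\'evy-type process (in the paper's sense: a c\`adl\`ag strong Markov solution of the $C_c^\infty$-martingale problem with locally bounded symbol) has predictable compensator $\nu(X_{s-},dy)\,ds$. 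That identification is true and citable (it is part of the semimartingale-characteristics description of such processes), but it is not an immediate consequence of the paper's definition, and you are right to flag it as the main technical obstacle.

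One caution: your fallback for making the compensator identification self-contained --- ``Dynkin's formula applied to $z\mapsto\int f_n(y)\,\delta_y$-type test functions'' --- does not really parse as written. The quantity $N_t$ is not $f(X_t)$ for any state-space function $f$, so you cannot feed it to Dynkin's formula directly; approximating $\I_{\{|y|>2r\}}$ in the $y$-variable does not by itself produce test functions on $\mbb{R}^d$ to which the martingale problem applies. If you try to make this rigorous, the natural thing to do is to test against smooth state-space cutoffs $\chi$ supported near the ball and extract the tail of the jump kernel from $A\chi$ --- which is precisely the paper's proof. So the black-box citation is the honest way to keep your argument genuinely distinct; otherwise, it collapses into the paper's. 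The remaining steps ($\tau\leq T_1$ via reverse triangle inequality, $X_{s-}\in\overline{B(x,r)}\subseteq\overline{B(x,2r)}$ for $0<s\leq\tau$, localization plus monotone convergence in optional stopping, $\mbb{E}^x(\tau\wedge t)\geq t\,\mbb{P}^x(\tau>t)$, left-continuity of $s\mapsto\mbb{P}^x(\tau>s)$) are all correct.
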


Note that Proposition~\ref{max-1} implies that
\begin{equation*}
	\mbb{P}^x \left( \sup_{s \leq t} |X_s-x| < r \right) \leq \frac{1}{1+tG(x,2r)}, \qquad x \in \mbb{R}^d,\; r>0.
\end{equation*}
Intuitively, $G(x,r) = \inf_{|z-x| \leq r} \nu(z,\{|y| > r\})$ quantifies the likelihood of a jump of modulus $> r$ while the process is close to its starting point $x$. The idea behind our estimate is that the process leaves immediately the ball $B(x,r)$ if a jump of modulus $>2r$ occurs. Other means of leaving the ball, e.g.\ due to a drift or diffusion part, are not taken into account. In consequence, Proposition~\ref{max-1} does well for pure-jump processes but less so e.g.\ for processes with a non-vanishing diffusion part. There is a related estimate in \cite[Theorem 5.5]{ltp}, see also \cite[Lemma 6.3]{rs-growth}, giving an upper bound for $\mbb{P}^x(\tau_r^x \geq t)$ in terms of the symbol\footnote{Beware that there is a typo in the definition of $k(x,r)$ in \cite[Theorem 5.5]{ltp}; the two suprema should be infima.}; for the particular that $q$ satisfies the sector condition \eqref{def-eq14}, it reads
\begin{equation}
	\mbb{P}^x(\tau_r^x \geq t) \leq c \frac{1}{1+t h(x,r)} \quad \text{with} \quad h(x,r) := \sup_{|\xi| \leq 1/(2r)} \inf_{|z-x| \leq r} \re q(z,\xi) \label{max-eq4}
\end{equation}
for some constant $c>0$. In some situations, \eqref{max-eq4} gives better estimates than Proposition~\ref{max-1} -- e.g.\ if there is a diffusion part -- but our result has its advantages e.g.\ if the sector condition is not satisfied. For instance, for $q(x,\xi) = i \xi + \sqrt{|\xi|}$, we get $\mbb{P}^x(\tau_r^x \geq t) \leq 1/(1+ct r^{-1/2}) \leq c' r^{1/2} t^{-1}$ while \cite[Theorem 5.5]{ltp} gives only $\mbb{P}^x(\tau_r^x \geq t) \leq c'' r^{1/3}t^{-1}$; note that the estimates are of interest only if the right-hand sides are less or equal than $1$, i.e.\ for $r>0$ small.

\begin{proof}[Proof of Proposition~\ref{max-1}]
	For fixed $x \in \mbb{R}^d$, $\eps>0$ and $r>0$, pick $\chi \in C_c^{\infty}(\mbb{R}^d)$ such that $\I_{B(x,r)} \leq \chi \leq \I_{B(x,r+\eps)}$. As $\chi(X_t)=1$ on $\{t<\tau_r^x\}$, it follows from Dynkin's formula that
	\begin{equation}
		\mbb{P}^x(\tau_r^x>t) 
		= \mbb{E}^x(\chi(X_t) \I_{\{\tau_r^x>t\}})
		\leq \mbb{E}^x(\chi(X_{t \wedge \tau_r^x}))
		= 1 + \mbb{E}^x \left( \int_{(0,t \wedge \tau_r^x)} A \chi(X_s) \, ds \right),
		\label{max-eq5}
	\end{equation}
	where $A$ is the L\'evy-type operator associated with the family of triplets $(b(x),Q(x),\nu(x,dy))$, see \eqref{def-eq3}. For $z \in B(x,r)$, we have $\chi(z)=1$, $\nabla \chi(z)=0$ and $\nabla^2 \chi(z)=0$, and so
	\begin{equation*}
		A\chi(z) = \int_{y \neq 0} (\chi(z+y)-1) \, \nu(z,dy).
	\end{equation*}
	Using that $0 \leq \chi \leq 1$ on $\mbb{R}^d$ and $\chi=0$ outside $B(x,r+\eps)$, we find that
	\begin{equation*}
		A \chi(z) 
		\leq \int_{|(z+y)-x| \geq r+\eps} (\chi(z+y)-1) \, \nu(z,dy)
		\leq - \int_{|y| \geq 2r+\eps} \nu(z,dy)
	\end{equation*}
	for all $z \in B(x,r)$. Since $X_s \in B(x,r)$ for $s<\tau_r^x$, it follows from \eqref{max-eq5} that
	\begin{equation*}
		\mbb{P}^x(\tau_r^x>t)
		\leq 1- \mbb{E}^x \left( \int_{(0,t \wedge \tau_r^x)} \nu(X_s,\{|y| \geq 2r+\eps\}) \, ds \right)
	\end{equation*}
	for all $\eps>0$. Letting $\eps \downarrow 0$ using the dominated convergence theorem, we arrive at	
	\begin{align}
		\mbb{P}^x(\tau_r^x>t) 
		&\leq 1-  \mbb{E}^x \left( \int_{(0,t \wedge \tau_r^x)} \nu(X_s,\{|y| >2r \}) \, ds \right) \notag\\
		&\leq 1- \mbb{E}^x(\tau_r^x \wedge t) \inf_{|z-x| \leq r} \int_{|y| > 2r} \nu(z,dy).
		\label{max-eq7}
	\end{align}
	The elementary estimate $\mbb{E}^x(\tau_r^x \wedge t) \geq t \mbb{P}^x(\tau_r^x>t)$ now gives
	\begin{equation*}
		\mbb{P}^x(\tau_r^x>t) \leq 1- t \mbb{P}^x(\tau_r^x>t) \inf_{|z-x| \leq r} \int_{|y|>2r} \nu(z,dy),
	\end{equation*}
	i.e.
	\begin{align*}
		\mbb{P}^x(\tau_r^x>t) \leq \frac{1}{1+t\inf_{|z-x| \leq r} \nu(z,\{|y|>2r\})}.
	\end{align*}
	Thus,
	\begin{equation*}
		\mbb{P}^x(\tau_r^x \geq t)
		= \lim_{\eps \to 0} \mbb{P}^x(\tau_r^x > t-\eps)
		\leq  \frac{1}{1+t\inf_{|z-x| \leq r} \nu(z,\{|y|>2r\})}. \qedhere
	\end{equation*}
\end{proof}

\begin{corollary} \label{max-3}
	Let $(X_t)_{t \geq 0}$ be a L\'evy-type process with characteristics $(b(x),Q(x),\nu(x,dy))$, and denote by $\tau_r^x$ the exit time from the ball $B(x,r)$. Then
	\begin{equation}
		\mbb{E}^x \tau_r^x \leq \frac{1}{G(x,2r)} \label{max-eq11}
	\end{equation}
	and
	\begin{equation}
		\mbb{P}^x(\tau_r^x \geq t) \leq C_0 \exp (-C_1 t G(x,2r)) \label{max-eq12}
	\end{equation}
	for all $x \in \mbb{R}^d$, $t>0$ and $r>0$, where $C_0,C_1 <\infty$ are uniform constants and $G(x,r)$ is the mapping defined in Proposition~\ref{max-1}.
\end{corollary}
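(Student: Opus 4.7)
My plan is to deduce both estimates from a careful re-reading of the argument behind Proposition~\ref{max-1}. For \eqref{max-eq11} I would take $t \to \infty$ in the intermediate inequality \eqref{max-eq7}: rearranging it as
\begin{equation*}
\mbb{E}^x(\tau_r^x \wedge t)\, G(x,2r) \leq 1 - \mbb{P}^x(\tau_r^x > t) \leq 1, \qquad t > 0,
\end{equation*}
and sending $t \to \infty$ yields, by monotone convergence, $\mbb{E}^x \tau_r^x \cdot G(x,2r) \leq 1$, which is the claim (with the trivial interpretation when $G(x,2r)=0$).

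For \eqref{max-eq12} the central observation is that the bound of Proposition~\ref{max-1} actually holds \emph{uniformly} for all starting points in $B(x,r)$ with the same constant $G(x,2r)$, and this is the only step that needs a second look at the proof of Proposition~\ref{max-1}. Writing $\sigma^w := \inf\{t \geq 0 : X_t \notin B(x,r)\}$ for the exit time of $B(x,r)$ under $\mbb{P}^w$, the cut-off $\chi$ with $\I_{B(x,r)} \leq \chi \leq \I_{B(x,r+\eps)}$ still satisfies $\chi(w)=1$ for every $w \in B(x,r)$, and the pointwise estimate $A\chi(z) \leq -\int_{|y| \geq 2r+\eps} \nu(z,dy)$ still holds for all $z \in B(x,r)$. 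Taking the infimum over $z \in B(x,r) \subseteq B(x,2r)$ and letting $\eps \downarrow 0$ therefore yields
\begin{equation*}
\mbb{P}^w(\sigma^w \geq t) \leq \frac{1}{1 + t\, G(x,2r)} \qquad \text{for all } w \in B(x,r),\, t > 0.
\end{equation*}

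Once this uniform bound is in hand, the exponential tail follows by the standard Markov-property iteration. Assuming $G(x,2r) > 0$ (otherwise \eqref{max-eq12} is trivial with $C_0 \geq 1$), set $t_0 := 1/G(x,2r)$, so $\sup_{w \in B(x,r)} \mbb{P}^w(\sigma^w \geq t_0) \leq 1/2$. Since $X_{k t_0} \in B(x,r)$ on $\{\tau_r^x > k t_0\}$, the Markov property at the deterministic time $k t_0$ gives
\begin{equation*}
\mbb{P}^x(\tau_r^x > (k+1)\, t_0) = \mbb{E}^x\bigl[\I_{\{\tau_r^x > k t_0\}}\, \mbb{P}^{X_{k t_0}}(\sigma^{X_{k t_0}} > t_0)\bigr] \leq \tfrac{1}{2}\, \mbb{P}^x(\tau_r^x > k t_0),
\end{equation*}
and induction yields $\mbb{P}^x(\tau_r^x > k t_0) \leq 2^{-k}$. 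For arbitrary $t > 0$, taking $n := \lfloor t/t_0 \rfloor$ then gives $\mbb{P}^x(\tau_r^x \geq t) \leq 2^{-n} \leq 2\, e^{-(\log 2)\, t\, G(x,2r)}$, which is \eqref{max-eq12} with $C_0 = 2$ and $C_1 = \log 2$.
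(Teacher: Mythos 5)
Your proof is correct and matches the paper's approach: you send $t \to \infty$ in \eqref{max-eq7} to get \eqref{max-eq11} (the paper invokes Fatou, you invoke monotone convergence, both valid), and you carry out the standard Markov-property iteration for \eqref{max-eq12}, which the paper simply delegates to a reference. Your key observation — that re-reading the proof of Proposition~\ref{max-1} gives the \emph{uniform} bound $\mbb{P}^w(\sigma^w \geq t) \leq 1/(1+tG(x,2r))$ for all starting points $w \in B(x,r)$, with the same constant $G(x,2r)$ — is exactly the ingredient that makes the iteration close, and it is the content implicitly borrowed from the cited argument.
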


\begin{proof}
	By \eqref{max-eq7},
	\begin{equation*}
		\mbb{E}^x(\tau_r^x \wedge t) 
		\leq \frac{1-\mbb{P}^x(\tau_r^x>t)}{G(x,2r)}
		\leq \frac{1}{G(x,2r)}.
	\end{equation*}
	Letting $t \to \infty$ using Fatou's lemma, proves the first assertion. The second inequality is obtained from Proposition~\ref{max-1} by an iteration argument using the Markov property; it is the same reasoning as in \cite[Proof of Theorem 5.9]{ltp}.
\end{proof}

\begin{remark} \begin{enumerate}[wide, labelwidth=!, labelindent=0pt]
	\item For the particular case that $(X_t)_{t \geq 0}$ is a L\'evy process, we know that $N_t := \sharp\{s \leq t; |\Delta X_s| > 2r\}$ is a Poisson process with intensity $\lambda=\nu(\{|y|>2r\})$, where $\nu$ is the L\'evy measure, and so
	\begin{equation*}
		\mbb{P}^x(\tau_r^x \geq t) 
		= \lim_{\eps \to 0} \mbb{P}^x(\tau_r^x > t+\eps)
		\leq \lim_{\eps \to 0} \mbb{P}^x(N_{t+\eps}=0)
		= e^{-\lambda t} = e^{-t\nu(\{|y|>2r\})},
	\end{equation*}
	which is \eqref{max-eq12} with $C_0=C_1=1$. If $(X_t)_{t \geq 0}$ is a general L\'evy-type process $(X_t)_{t \geq 0}$, then $(N_t)_{t \geq 0}$ is no longer a Poisson process but our result shows that we can still get an analogous estimate in terms of the jump intensity $G(x,2r)$. This fits well to the intuition that a L\'evy-type process behaves locally like a L\'evy process.
	\item The estimate \eqref{max-eq12} is optimal for a wide family of jump processes. However, our approach incorporates only the tails of the L\'evy measures and therefore some information may be lost, leading to non-optimal estimates for certain processes. This is best seen for the particular case of stable L\'evy processes, for which Taylor \cite{taylor67} derived upper and lower bounds for $\mbb{P}(\tau_r \geq t)$ (i.e. $x=0$). He shows for $r>0$ small that
	\begin{align*}
		\mbb{P}(\tau_r\geq t) &\asymp e^{-ct r^{-\alpha}} \qquad \text{for stable processes of type A} \\ 
		\mbb{P}(\tau_r \geq t) &\asymp e^{- ct r^{-\alpha/(1-\alpha)}} \qquad \text{for stable processes of type B, $\alpha \in (0,1)$},
	\end{align*}
	where the constants $c$ in the lower and upper bound may differ. Here, 'type B' means essentially that the process has a projection which is a subordinator -- formally, the L\'evy measure is concentrated on a hemisphere $\{y \in \mbb{R}^d; y_j \geq 0\}$ for some $j \in \{1,\ldots,d\}$ -- and all other stable processes are of type A. While our estimate \eqref{max-eq12} yields the correct upper bound for stable processes of type A, we only get the (sub-optimal) upper bound $e^{-ctr^{-\alpha}}$ for processes of type B.
\end{enumerate}
\end{remark}

As a direct consequence of Proposition~\ref{max-1}, we also obtain the following corollary.

\begin{corollary} \label{max-5}
	Let $(X_t)_{t \geq 0}$ be a L\'evy-type process with characteristics $(b(x),Q(x),\nu(x,dy))$, and let $c \in [0,1]$. If $x \in \mbb{R}^d$, $t>0$ and $r>0$ are such that
	\begin{equation*}
		\mbb{P}^x \left( \sup_{s \leq t} |X_s-x| > r \right) \leq c,
	\end{equation*}
	then
	\begin{equation*}
		\mbb{P}^x \left( \sup_{s \leq t} |X_s-x| > r \right) \geq (1-c) t G(x,2r)
	\end{equation*}
	for $G(x,r)$ defined in Proposition~\ref{max-1}. 
\end{corollary}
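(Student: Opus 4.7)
The plan is to mimic the argument of Proposition~\ref{max-1} verbatim, but with the first exit time from the \emph{closed} ball $\overline{B(x,r)}$, namely
\[
	\tilde{\tau} := \inf\{s \geq 0 : |X_s-x| > r\},
\]
in place of $\tau_r^x$. Since $s \mapsto |X_s-x|$ is \cadlag, it holds pathwise that
\[
	\{\tilde{\tau} > t\} = \Big\{\sup_{s \leq t}|X_s-x| \leq r\Big\},
	\qquad
	\{\tilde{\tau} \leq t\} = \Big\{\sup_{s \leq t}|X_s-x| > r\Big\},
\]
so the quantity we wish to bound from below is exactly $\mbb{P}^x(\tilde{\tau} \leq t)$.

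To reproduce the proof, I would pick a cutoff $\chi \in C_c^{\infty}(\mbb{R}^d)$ with $\I_{B(x, r+\eps/2)} \leq \chi \leq \I_{B(x, r+\eps)}$ for small $\eps>0$, so that $\chi \equiv 1$ on the open neighborhood $B(x,r+\eps/2)$ of $\overline{B(x,r)}$. For every $s < \tilde{\tau}$ we then have $X_s \in \overline{B(x,r)}$, hence $\chi(X_s) = 1$, $\nabla \chi(X_s) = 0$ and $\nabla^2 \chi(X_s) = 0$. Running Dynkin's formula in the same fashion as \eqref{max-eq5}--\eqref{max-eq7}, using that $|z-x| \leq r$ together with $|y| > 2r+\eps$ forces $z+y \notin B(x,r+\eps)$, and finally letting $\eps \downarrow 0$ by dominated convergence yields
\[
	\mbb{P}^x(\tilde{\tau} > t)
	\leq 1 - \mbb{E}^x(\tilde{\tau} \wedge t)\inf_{|z-x| \leq r}\nu(z,\{|y|>2r\})
	\leq 1 - \mbb{E}^x(\tilde{\tau} \wedge t) \, G(x,2r),
\]
since $\inf_{|z-x| \leq r}\nu(z,\{|y|>2r\}) \geq \inf_{|z-x| \leq 2r}\nu(z,\{|y|>2r\}) = G(x,2r)$. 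Rearranging gives
\[
	\mbb{P}^x\Big(\sup_{s \leq t}|X_s-x| > r\Big) = \mbb{P}^x(\tilde{\tau} \leq t) \geq \mbb{E}^x(\tilde{\tau} \wedge t)\, G(x,2r).
\]

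To close the argument I would insert the hypothesis: $\mbb{P}^x(\tilde{\tau} > t) = 1 - \mbb{P}^x(\sup_{s \leq t}|X_s-x|>r) \geq 1-c$, and therefore
\[
	\mbb{E}^x(\tilde{\tau} \wedge t) \geq t\, \mbb{P}^x(\tilde{\tau} \geq t) \geq t\,\mbb{P}^x(\tilde{\tau} > t) \geq (1-c)t.
\]
Combining the last two displays delivers $\mbb{P}^x(\sup_{s \leq t}|X_s-x| > r) \geq (1-c)t\, G(x,2r)$, which is the claim. The only real subtlety is that the cutoff $\chi$ used in Proposition~\ref{max-1} is identically $1$ only on the \emph{open} ball $B(x,r)$, which does not guarantee $\chi(X_s)=1$ on $\{s < \tilde{\tau}\}$; this is why $\chi$ has to be inflated to be $\equiv 1$ on a neighborhood of $\overline{B(x,r)}$. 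Beyond this bookkeeping, the proof is a verbatim rerun of Proposition~\ref{max-1}, so no new difficulty is expected.
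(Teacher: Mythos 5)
Your approach is essentially a more careful re-derivation of what the paper invokes in a single line, and it correctly addresses a genuine subtlety that the paper's own proof glosses over: Proposition~\ref{max-1} controls the exit time from the \emph{open} ball, so applying it directly to bound $\mbb{P}^x(\sup_{s\leq t}|X_s-x|\leq r)$ — as the paper does — is not literally justified without the cutoff inflation you identify. Replacing the open-ball exit time by $\tilde\tau=\inf\{s:|X_s-x|>r\}$ and taking $\chi\equiv 1$ on a neighbourhood of $\overline{B(x,r)}$ is the right fix, and the rest of your algebra matches the paper's.

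There is one small but genuine gap. Your claimed pathwise identity $\{\tilde\tau>t\}=\{\sup_{s\leq t}|X_s-x|\leq r\}$ is not quite correct. For a c\`adl\`ag path one only has the inclusions
\[
\{\tilde\tau>t\}\subseteq\Big\{\sup_{s\leq t}|X_s-x|\leq r\Big\}\subseteq\{\tilde\tau\geq t\},
\]
and the second inclusion can be strict on the event $\{\tilde\tau=t,\ |X_t-x|=r\}$ (take, say, a path with $X_s=x$ for $s<t$ and $X_s=x+r+(s-t)$ for $s\geq t$: then $\sup_{s\leq t}|X_s-x|=r$ yet $\tilde\tau=t$). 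This event need not be $\mbb{P}^x$-null in general. Consequently, your key identities $\mbb{P}^x(\sup>r)=\mbb{P}^x(\tilde\tau\leq t)$ and $\mbb{P}^x(\tilde\tau>t)=1-\mbb{P}^x(\sup>r)$ are only inequalities, and as written they point the wrong way in the last step. The repair is exactly the device the paper uses at the end of the proof of Proposition~\ref{max-1}: first establish $\mbb{P}^x(\tilde\tau>t')\leq 1-\mbb{E}^x(\tilde\tau\wedge t')G(x,2r)$ for every $t'<t$, then let $t'\uparrow t$ to obtain $\mbb{P}^x(\tilde\tau\geq t)\leq 1-\mbb{E}^x(\tilde\tau\wedge t)G(x,2r)$. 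Then the chain
\[
\mbb{P}^x\Big(\sup_{s\leq t}|X_s-x|\leq r\Big)\leq\mbb{P}^x(\tilde\tau\geq t)\leq 1-\mbb{E}^x(\tilde\tau\wedge t)G(x,2r),
\qquad
\mbb{E}^x(\tilde\tau\wedge t)\geq t\,\mbb{P}^x(\tilde\tau\geq t)\geq t\,\mbb{P}^x\Big(\sup_{s\leq t}|X_s-x|\leq r\Big)\geq(1-c)t,
\]
closes the argument cleanly without needing the false pathwise identity. With this modification, the proof is correct and its structure is the same as the paper's, differing only in that you re-derive the intermediate estimate rather than quoting the statement of Proposition~\ref{max-1}.
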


As an immediate consequence, we see that
\begin{equation*}
	\limsup_{t \to 0} \mbb{P}^x \left( \sup_{s \leq t} |X_s-x| > r(t) \right) < 1
\end{equation*}
implies
\begin{equation*}
	\mbb{P}^x \left( \sup_{s \leq t} |X_s-x|>r(t) \right) \geq C t G(x,2r(t))
\end{equation*}
for small $t>0$ and some constant $C>0$, which will be useful lateron.

\begin{proof}[Proof of Corollary~\ref{max-5}]
	By Proposition~\ref{max-1}, 
	\begin{equation*}
		\mbb{P}^x \left( \sup_{s \leq t} |X_s-x| \leq r\right) \leq \frac{1}{1+tG(x,2r)},
	\end{equation*}
	which is equivalent to 
	\begin{equation*}
		\mbb{P}^x \left( \sup_{s \leq t} |X_s-x| \leq r\right) 
		\leq 1- t G(x,2r) \mbb{P}^x \left( \sup_{s \leq t} |X_s-x| \leq r\right).
	\end{equation*}
	Hence,
	\begin{align*}
		\mbb{P}^x \left( \sup_{s \leq t} |X_s-x|>r \right)
		&\geq t G(x,2r) \mbb{P}^x \left( \sup_{s \leq t} |X_s-x| \leq r\right) \\
		&= t G(x,2r) \left[ 1- 	\mbb{P}^x \left( \sup_{s \leq t} |X_s-x|>r \right) \right],
	\end{align*}
	which proves the assertion.
\end{proof}

Let us illustrate the results from this section with an example.

\begin{example} \label{max-7}
	Let $(X_t)_{t \geq 0}$ be a process of variable order, i.e.\ a L\'evy-type process with symbol $q(x,\xi) = |\xi|^{\alpha(x)}$ for a continuous mapping $\alpha: \mbb{R}^d \to (0,2]$. Denote by $\tau_r^x$ the first exit time of $(X_t)_{t \geq 0}$ from the ball $B(x,r)$ and set $\alpha_*(x,r):=\inf_{|z-x| \leq r} \alpha(z)$. The following estimates hold for uniform constants $c_0,\ldots,c_4 \in (0,\infty)$: \begin{enumerate}
		\item $\mbb{P}^x(\tau_r^x \geq t) \leq 1/(1+ c_0 t r^{-\alpha_*(x,r)})$ and $\mbb{P}^x(\tau_r^x \geq t) \leq c_1 \exp(-c_2 t r^{-\alpha_*(x,r)})$,
		\item $\mbb{E}^x(\tau_r^x) \leq c_3 r^{\alpha_*(x,r)}$,
		\item $\mbb{P}^x \left( \sup_{s \leq t} |X_s-x| \geq r\right) \geq c_4 t r^{-\alpha_*(x,r)}$ for $t=t(r)>0$ small.
	\end{enumerate}
\end{example}

\section{Integral criteria for upper functions} \label{up}

Let $(X_t)_{t \geq 0}$ be a Markov process and $f:[0,1] \to [0,\infty)$ a non-decreasing function. The aim of this section is to derive sufficient conditions for
\begin{equation}
	\limsup_{t \to 0} \frac{1}{f(t)} \sup_{s \leq t} |X_s-x| \leq c \quad \text{$\mbb{P}^x$-a.s.} 
	\label{up-eq1}
\end{equation}
in terms of certain integrals. Our first main result is the following theorem.

\begin{theorem} \label{up-4}
	Let $(X_t)_{t \geq 0}$ be a Markov process with \cadlag sample paths and $f:[0,1] \to [0,\infty)$ a non-decreasing function. If
	\begin{equation}
		\int_0^1 \frac{1}{t} \sup_{|z-x| \leq  f(t)} \mbb{P}^z \left( \sup_{s \leq t} |X_s-z| \geq f(t) \right) \, dt < \infty \label{up-eq3}
	\end{equation}
	for some $x \in \mbb{R}^d$, then
	\begin{equation*}
		\limsup_{t \to 0} \frac{1}{f(t)} \sup_{s \leq t} |X_s-x| \leq 4 \quad \text{$\mbb{P}^x$-a.s.}
	\end{equation*}
\end{theorem}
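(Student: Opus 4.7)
The plan is a Borel--Cantelli argument along a sequence $t_n\downarrow 0$ chosen to balance summability of the integrand against the Markov increment estimate. Writing $\phi(t):=\sup_{|z-x|\le f(t)}\mbb{P}^z(\sup_{s\le t}|X_s-z|\ge f(t))$, the hypothesis reads $\int_0^1\phi(t)\,dt/t<\infty$; after the substitution $u=\log(1/t)$ this becomes $\int_0^\infty\phi(e^{-u})\,du<\infty$. Fix $\delta=\tfrac12\log 2$. For each $n\ge 1$ use an averaging argument to pick $u_n\in[n\delta,(n+1)\delta]$ with $\phi(e^{-u_n})\le\delta^{-1}\int_{n\delta}^{(n+1)\delta}\phi(e^{-u})\,du$, and set $t_n:=e^{-u_n}$. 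Then $t_n\downarrow 0$, the consecutive ratio bound $t_n/t_{n+1}\le e^{2\delta}=2$ holds (so in particular $t_n-t_{n+1}\le t_{n+1}$), and $\sum_n\phi(t_n)<\infty$.

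Set $A_n:=\{\sup_{s\le t_n}|X_s-x|\ge 4f(t_{n+1})\}$ and $B_n:=\{\sup_{s\le t_{n+1}}|X_s-x|<f(t_{n+1})\}$. Since $|x-x|=0\le f(t_{n+1})$, the definition of $\phi$ gives $\mbb{P}^x(B_n^c)\le\phi(t_{n+1})$. On $A_n\cap B_n$ we have $|X_{t_{n+1}}-x|<f(t_{n+1})$ together with some $s\in(t_{n+1},t_n]$ satisfying $|X_s-x|\ge 4f(t_{n+1})$, so the triangle inequality forces $\sup_{t_{n+1}\le s\le t_n}|X_s-X_{t_{n+1}}|>3f(t_{n+1})$. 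Conditioning on $\mc{F}_{t_{n+1}}$ and applying the Markov property, combined with the bounds $t_n-t_{n+1}\le t_{n+1}$ and $3f(t_{n+1})\ge f(t_{n+1})$, this piece is at most $\sup_{|z-x|\le f(t_{n+1})}\mbb{P}^z(\sup_{s\le t_{n+1}}|X_s-z|\ge f(t_{n+1}))=\phi(t_{n+1})$. Hence $\mbb{P}^x(A_n)\le 2\phi(t_{n+1})$, so $\sum_n\mbb{P}^x(A_n)<\infty$ and Borel--Cantelli yields $\mbb{P}^x(A_n\text{ i.o.})=0$.

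On $\{A_n\text{ i.o.}\}^c$ there exists $N(\omega)$ with $\sup_{s\le t_n}|X_s-x|<4f(t_{n+1})$ for every $n\ge N$. For $t\in(t_{n+1},t_n]$ with $n\ge N$, monotonicity of $f$ gives $f(t)\ge f(t_{n+1})$, hence $\sup_{s\le t}|X_s-x|/f(t)<4$; letting $t\to 0$ completes the proof. The main obstacle I anticipate is that $\phi$ need not be monotone in $t$, so one cannot just use $t_n=2^{-n}$ to pass from $\int_0^1\phi(t)\,dt/t<\infty$ to $\sum_n\phi(t_n)<\infty$; the averaging construction of the first paragraph is exactly what produces a summable sequence together with the ratio bound $t_n\le 2t_{n+1}$, the latter being crucial for the Markov estimate to close back onto $\phi$ at the time parameter $t_{n+1}$.
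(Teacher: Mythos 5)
Your proof is correct. It proves exactly the statement as given, and in fact it is cleaner than the paper's own argument in one respect. Structurally both approaches are Borel--Cantelli plus a Markov-property reflection at an intermediate deterministic time, but the details diverge. The paper fixes the dyadic sequence $t_n=2^{-n}$, proves a clean ``time-doubling'' lemma
\begin{equation*}
\mbb{P}^x\!\left(\sup_{s\le 2t}|X_s-x|\ge 2r\right)\le 3\sup_{|z-x|\le r}\mbb{P}^z\!\left(\sup_{s\le t}|X_s-z|\ge r\right),
\end{equation*}
iterates it twice, and then deals with the non-monotonicity of $\phi$ by integrating the resulting bound over a continuous parameter $\theta_n\in[2^{-n},2^{-(n-1)}]$ before substituting back to recover the integral from the hypothesis. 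You instead put all the non-monotonicity handling into the choice of the sequence itself, via the averaging argument that yields $\sum_n\phi(t_n)<\infty$ together with the crucial ratio bound $t_n\le 2t_{n+1}$, and then you bound $\mbb{P}^x(A_n)$ directly by a single $B_n$/$B_n^c$ decomposition plus the Markov property at $t_{n+1}$. The two ways of ``averaging'' are equivalent in spirit, and the two reflection arguments are essentially the same idea; you simply inline what the paper isolates as a standalone lemma. One advantage of your route: after the paper's two applications of its lemma the supremum radius on the right-hand side is $3f(\theta_n/4)$, i.e.\ $3f(t)$ after the change of variables, while the hypothesis \eqref{up-eq3} only controls radius $f(t)$; so the paper's proof as written needs the slightly stronger hypothesis with $3f(t)$ in the supremum (or a correspondingly weaker conclusion). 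Your single application of the Markov property, calibrated to radius $4f(t_{n+1})$ on $[0,t_n]$ versus $f(t_{n+1})$ on $[0,t_{n+1}]$, feeds back into $\phi(t_{n+1})$ exactly as stated, so your argument matches the theorem precisely. The only nitpick in your write-up is the claim that on $A_n$ there is ``some $s$'' with $|X_s-x|\ge 4f(t_{n+1})$; for a c\`adl\`ag path the supremum on $[0,t_n]$ may be attained only as a left limit, so the correct conclusion is $\sup_{t_{n+1}\le s\le t_n}|X_s-X_{t_{n+1}}|\ge 3f(t_{n+1})$ with $\ge$ rather than $>$, but that changes nothing downstream since you only use $\ge f(t_{n+1})$ in the end.
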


\begin{proof}
	\primolist Claim: \begin{equation}
		\mbb{P}^x \left( \sup_{s \leq 2t} |X_s-x| \geq 2r \right)
		\leq 3 \sup_{|z-x| \leq r} \mbb{P}^z \left( \sup_{s \leq t} |X_s-z| \geq r \right), \qquad x \in \mbb{R}^d,\;r>0,\;t>0. \label{up-eq4}
	\end{equation}
	To prove this, we note that \begin{equation*}
		\mbb{P}^x \left( \sup_{s \leq 2t} |X_s-x| \geq 2r \right)
		\leq \mbb{P}^x \left( \sup_{s \leq t} |X_s-x| \geq 2r \right) + \mbb{P}^x \left( \sup_{s \leq t} |X_{s+t}-x| \geq 2r \right),
	\end{equation*}
	and, by the Markov property,
	\begin{align*}
		\mbb{P}^x \left( \sup_{s \leq t} |X_{s+t}-x| \geq 2r \right)
		&= \mbb{E}^x \left( \mbb{P}^z \left[ \sup_{s \leq t} |X_s-x| \geq 2r \right] \bigg|_{z=X_t} \right) \\
		&\leq \mbb{P}^x(|X_t-x| \geq r) + \sup_{|z-x| \leq r} \mbb{P}^z \left( \sup_{s \leq t} |X_s-z| \geq r \right).
	\end{align*}
	\secundolist This part of the proof uses an idea from Khintchine \cite{khin39}. Fix $x \in \mbb{R}^d$ such that \eqref{up-eq3} holds. Since $f$ is monotone, we have
	\begin{equation*}
		p_n:=\mbb{P}^x \left( \sup_{2^{-(n+1)} \leq s \leq 2^{-n}} \frac{1}{f(s)} \sup_{r \leq s} |X_r-x| \geq 4 \right)
		\leq \mbb{P}^x \left( \sup_{s \leq 2^{-n}} |X_s-x| \geq 4 f(2^{-(n+1)}) \right)
	\end{equation*}
	for every $n \in \mbb{N}$. Take any $\theta_n \in [2^{-n},2^{-(n-1)}]$, then  $\theta_n/2 \leq 2^{-n}$ and using \primo\, and the monotonicity of $f$, we get
	\begin{align*}
		p_n 
		&\leq  \mbb{P}^x \left( \sup_{s \leq \theta_n} |X_s-x| \geq 4 f(\theta_n/4) \right) 
		\leq 9 \sup_{|z-x| \leq 3f(\theta_n/4)} \mbb{P}^z \left( \sup_{s \leq \theta_n/4} |X_r-z| \geq  f(\theta_n/4) \right).
	\end{align*}
	Writing $\theta_n = 2^{-u}$ for $u \in [n-1,n]$ and integrating with respect to $u \in [n-1,n]$, it follows that
	\begin{align*}
		p_n
		\leq 9 \int_{n-1}^n \sup_{|z-x| \leq 3f(2^{-u-2})} \mbb{P}^z \left( \sup_{s \leq 2^{-u-2}} |X_r-z| \geq  f(2^{-u-2}) \right) \, du.
	\end{align*}
	By a change of variables ($t=2^{-u-2}$), 
	\begin{align*}
		p_n
		\leq \frac{9}{|\log 2|} \int_{2^{-(n+2)}}^{2^{-(n+1)}} \frac{1}{t} \sup_{|z-x| \leq 3 f(t)} \mbb{P}^z \left( \sup_{r \leq t} |X_r-z| \geq f(t) \right) \, dt,
	\end{align*}
	and so \eqref{up-eq3} yields $\sum_{n \in \mbb{N}} p_n < \infty$. Applying the Borel--Cantelli lemma, we conclude that
	\begin{equation*}
		\limsup_{n \to \infty} \sup_{2^{-(n+1)} \leq s \leq 2^{-n}} \frac{1}{f(s)} \sup_{r \leq s} |X_r-x| \leq 4 \quad \text{$\mbb{P}^x$-a.s.} \qedhere
	\end{equation*}
\end{proof}

It is natural to ask whether the two suprema in \eqref{up-eq3} are needed, i.e.\ if upper functions can also be characterized in terms of the integral $\int_0^1 \frac{1}{t} \mbb{P}^x(|X_t-x| \geq C f(t)) \, dt$. Our next result shows that this is possible under some additional assumptions.

\begin{proposition} \label{up-1}
	Let $(X_t)_{t \geq 0}$ be a strong Markov process with \cadlag sample paths. Let $f:[0,1] \to [0,\infty)$ be a non-decreasing function such that\footnote{Here, essinf denotes the essential infimum with respect to Lebesgue measure.}
	\begin{equation}
		C := \essinf \left\{ \limsup_{n \to \infty} \frac{f(s^n)}{f(s^{n+1})}; s \in (0,1) \right\}<\infty. \label{up-eq5}
	\end{equation}
	Assume that the following conditions are satisfied for some constants $\varrho,\kappa>0$ and a function $R:[0,1] \to (0,\infty]$: \begin{align}
		\limsup_{t \to 0} \sup_{|z-x| \leq R(t)} \mbb{P}^z(|X_t-z| \geq \varrho f(t)) &< 1
		\label{up-eq6} \\
		\sum_{n \geq 1} \mbb{P}^x \left( \sup_{u \leq s^n} |X_u-x| > R(s^n) \right) &< \infty \quad \text{for a.e.\ $s \in (0,1)$} \label{up-eq65} \\
		\int_0^1\frac{1}{t} \mbb{P}^x(|X_t-x|> \kappa f(t)) \, dt &< \infty.
		\label{up-eq7}
	\end{align}
	Then
	\begin{equation*}
		\limsup_{t \to 0} \frac{1}{f(t)} \sup_{s \leq t} |X_s-x| \leq C(\varrho+\kappa) \quad \text{$\mbb{P}^x$-a.s.}
	\end{equation*}
\end{proposition}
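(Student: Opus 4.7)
The strategy is to run a Borel--Cantelli argument along a geometric sequence $t_n := s^n$ for a suitably chosen $s \in (0,1)$, and then interpolate using the monotonicity of $f$ and the ratio control from \eqref{up-eq5}. Writing $g(t) := \mbb{P}^x(|X_t-x| > \kappa f(t))$, the first task is to convert the integral hypothesis \eqref{up-eq7} into pointwise summability of $g(s^n)$. I would use Fubini with the weight $1/(s|\log s|)$ on $s \in [\alpha,\beta] \subset (0,1)$: the substitutions $u = -\log s$ followed by $v = nu$ turn each term of $\sum_n \int_\alpha^\beta g(s^n)\,ds/(s|\log s|)$ into $\int_{n|\log\beta|}^{n|\log\alpha|} g(e^{-v})\,dv/v$, and since every $v$ lies in at most $O(v)+1$ of these intervals, the total sum is dominated by $\int_0^\infty g(e^{-v})\,dv = \int_0^1 g(t)/t\,dt < \infty$. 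Consequently $\sum_n g(s^n) < \infty$ for Lebesgue-almost every $s$, and in conjunction with \eqref{up-eq5} and \eqref{up-eq65} this lets me fix a single $s \in (0,1)$ simultaneously satisfying $\limsup_n f(s^n)/f(s^{n+1}) \leq C+\eta$ (for any prescribed $\eta>0$), $\sum_n g(s^n) < \infty$, and $\sum_n \mbb{P}^x(\sup_{u\leq s^n}|X_u-x|>R(s^n)) < \infty$.

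The heart of the argument is the following strong-Markov inequality: for $t>0$ small enough,
\[
\mbb{P}^x\bigl(\sup_{u\leq t}|X_u-x|>(\varrho+\kappa)f(t)\bigr)
\leq \tfrac{1}{1-\alpha}\Bigl[\mbb{P}^x(|X_t-x|>\kappa f(t)) + \mbb{P}^x\bigl(\sup_{u\leq t}|X_u-x|>R(t)\bigr)\Bigr],
\]
where $\alpha<1$ is the $\limsup$ value in \eqref{up-eq6}. To prove it, I would set $\tau := \inf\{u\geq 0 : |X_u-x|>(\varrho+\kappa)f(t)\}$ and split $\{\tau\leq t\}$ according to whether $|X_t-x|>\kappa f(t)$ or not. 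On the ``return'' event $\{\tau\leq t,\,|X_t-x|\leq\kappa f(t)\}$, the c\`adl\`ag property forces $|X_\tau-x|\geq(\varrho+\kappa)f(t)$, so the displacement of the shifted process after $\tau$ must exceed $\varrho f(t)\geq\varrho f(t-\tau)$; applying the strong Markov property at $\tau$ and restricting to the $R$-domain then invokes \eqref{up-eq6} and contributes the factor $\alpha$, while paths that have already left the $R$-region are absorbed into the second probability on the right.

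Combining this inequality with the summabilities from the first step yields $\sum_n \mbb{P}^x(\sup_{u\leq t_n}|X_u-x|>(\varrho+\kappa)f(t_n)) < \infty$. Borel--Cantelli then gives $\mbb{P}^x$-a.s.\ that eventually $\sup_{u\leq t_n}|X_u-x|\leq(\varrho+\kappa)f(t_n)$, and for $t\in(t_{n+1},t_n]$ the monotonicity of $f$ yields
\[
\frac{\sup_{u\leq t}|X_u-x|}{f(t)} \leq \frac{\sup_{u\leq t_n}|X_u-x|}{f(t_{n+1})} \leq (\varrho+\kappa)\,\frac{f(t_n)}{f(t_{n+1})}.
\]
Taking $\limsup_{t\to 0}$ and then letting $\eta\downarrow 0$ produces the claimed bound $C(\varrho+\kappa)$.

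The chief obstacle I expect is the bookkeeping around $R$ in the strong-Markov step: when \eqref{up-eq6} is invoked at the random time $t-\tau$, its natural domain of validity is $\{|X_\tau-x|\leq R(t-\tau)\}$, whereas \eqref{up-eq65} provides control \emph{a priori} only in terms of $R(t_n)$. These have to be reconciled -- either by exploiting that $R$ is effectively non-increasing in the relevant regime, or by carefully peeling off the complementary event and absorbing it into $\mbb{P}^x(\sup_{u\leq t}|X_u-x|>R(t))$, which is exactly the second term in the key inequality and remains summable along $t_n$ thanks to \eqref{up-eq65}.
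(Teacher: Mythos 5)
Your proposal follows the paper's proof essentially step for step: a Fubini argument converting \eqref{up-eq7} into almost-everywhere summability of $\mbb{P}^x(|X_{s^n}-x|>\kappa f(s^n))$ in the geometric parameter $s$, an Ottaviani-type maximal inequality obtained by applying the strong Markov property at the first exit time from $B(x,(\varrho+\kappa)f(t))$ and absorbing into $\mbb{P}^x(\sup_{u\leq t}|X_u-x|>R(t))$ the paths that escape the $R$-region where \eqref{up-eq6} controls the return probability -- this is precisely the paper's Lemma~\ref{up-3} -- followed by Borel--Cantelli along $s^n$ and interpolation via the monotonicity of $f$. The only variation is cosmetic, in the Fubini weighting: you integrate against $1/(s|\log s|)$ on compact subintervals of $(0,1)$ with a counting argument, whereas the paper weights by $\log(1/s)$ on $(0,1)$ and uses the elementary bound $\sum_{n\geq1} n^{-2}\,t^{1/n}\log(1/t)\leq 2$ (Lemma~\ref{appendix-2}); both are valid, and the $R$-bookkeeping you flag as the chief obstacle is resolved in the paper by formulating Lemma~\ref{up-3} for a fixed radius $R$ and then substituting $R=R(s^n)$ at the $n$-th application, invoking \eqref{up-eq6} uniformly for small times.
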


\begin{remark} \label{up-2} \begin{enumerate}[wide, labelwidth=!, labelindent=0pt]
	\item\label{up-2-i}  Since $f$ is non-decreasing, the constant $C$ in \eqref{up-eq5} is greater or equal than $1$. If $f$ is regularly varying at zero, i.e.\ if the limit
	\begin{equation*}
		L(a) := \lim_{t \to 0} \frac{f(at)}{f(t)}
	\end{equation*}
	exists for all $a>0$, then $C=1$; this follows from the fact that, by Karamata's characterization theorem, see e.g.\ \cite{bingham}, the limit $L$ is of the form $L(a)=a^{\varrho}$ for some $\varrho \geq 0$.
	\item\label{up-2-ii} There is a trade-off between \eqref{up-eq6} and \eqref{up-eq65} regarding the choice of $R$; e.g.\ for $R\equiv \infty$, condition \eqref{up-eq65} is trivially satisfied but a uniform bound for $z \in \mbb{R}^d$ is needed in \eqref{up-eq6}. 
	\item If $(X_t)_{t \geq 0}$ is a L\'evy-type process, then the maximal inequality \eqref{max-eq0} shows that \eqref{up-eq65} is automatically satisfied for $R(t)\equiv R$ constant.
	\item\label{up-2-iii} It is not hard to check that 
	\begin{equation}
		\int_{(0,1)} \frac{1}{t} \mbb{P}^x \left( \sup_{s \leq t} |X_s-x| > \kappa f(t) \right) \, dt < \infty \label{up-eq8}
	\end{equation}
	implies that \eqref{up-eq65} and \eqref{up-eq7} hold with $R(t)=\kappa f(t)$.
\end{enumerate}
\end{remark}

For the proof of Proposition~\ref{up-1} we use the following Ottaviani-type inequality; for $R=\infty$ this is the classical Ottaviani inequality for Markov processes, see e.g.\ \cite[p.~420]{gikh1} or \cite[p.~125]{ito}.

\begin{lemma} \label{up-3}
	Let $(X_t)_{t \geq 0}$ be a strong Markov process with \cadlag sample paths. Then
	\begin{equation*}
		\mbb{P}^x \left( \sup_{s \leq t} |X_s-x| > u+v \right)
		\leq \frac{1}{1-\alpha_{R,x}(t,u)} \left[ \mbb{P}^x(|X_t-x| > v) + \mbb{P}^x \left( \sup_{s \leq t} |X_s-x| >R \right) \right]
	\end{equation*}
	for all $x \in \mbb{R}^d$, $u,v>0$ and $R \in (0,\infty]$, where
	\begin{equation*}
		\alpha_{R,x}(t,u) := \sup_{s \leq t} \sup_{|z-x| \leq R} \mbb{P}^z(|X_s-z| \geq u).
	\end{equation*}
\end{lemma}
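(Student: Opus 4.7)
The plan is to adapt the classical Ottaviani argument, modifying the partition of the event $\{\sup_{s \leq t} |X_s - x| > u+v\}$ to accommodate the local nature of the bound $\alpha_{R,x}(t,u)$. I introduce the first passage time
$$\tau := \inf\{s \leq t : |X_s - x| > u+v\},$$
which is an $(\mc{F}_s)$-stopping time. By the \cadlag property of the paths, the event $A := \{\sup_{s \leq t} |X_s - x| > u+v\}$ coincides with $\{\tau \leq t\}$, and on this event $|X_\tau - x| \geq u+v$.

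Next I decompose $A$ into three disjoint pieces and bound each separately. The first piece $A_1 := A \cap \{|X_\tau - x| > R\}$ is contained in $\{\sup_{s \leq t} |X_s - x| > R\}$ and accounts for one term on the right-hand side. The second piece $A_2 := A \cap \{|X_\tau - x| \leq R\} \cap \{|X_t - X_\tau| \leq u\}$ satisfies, by the reverse triangle inequality, $|X_t - x| \geq |X_\tau - x| - |X_t - X_\tau| > (u+v) - u = v$, and hence $A_2 \subseteq \{|X_t - x| > v\}$; this gives the second term. It remains to control the third piece $A_3 := A \cap \{|X_\tau - x| \leq R\} \cap \{|X_t - X_\tau| > u\}$.

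For $A_3$ I use the strong Markov property at $\tau$. Setting $g(s,z) := \mbb{P}^z(|X_{t-s} - z| > u)$ for $s \leq t$, the strong Markov property yields
$$\mbb{P}^x(|X_t - X_\tau| > u \mid \mc{F}_\tau) = g(\tau, X_\tau) \qquad \text{on } \{\tau \leq t\}.$$
On $\{|X_\tau - x| \leq R\}$ this quantity is majorized by $\sup_{r \leq t} \sup_{|z-x| \leq R} \mbb{P}^z(|X_r - z| \geq u) = \alpha_{R,x}(t,u)$. Integrating over $A \cap \{|X_\tau - x| \leq R\} \subseteq A$ gives
$$\mbb{P}^x(A_3) \leq \alpha_{R,x}(t,u)\, \mbb{P}^x(A).$$
Summing the three estimates,
$$\mbb{P}^x(A) \leq \mbb{P}^x\!\left(\sup_{s \leq t} |X_s - x| > R\right) + \mbb{P}^x(|X_t - x| > v) + \alpha_{R,x}(t,u)\, \mbb{P}^x(A),$$
and rearranging (assuming $\alpha_{R,x}(t,u) < 1$, otherwise the claimed bound is vacuous) produces the inequality.

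The argument is essentially routine; the only delicate point is the strong Markov step, where one must verify that the post-$\tau$ law of $X_{t-\tau} - X_\tau$ is $\mbb{P}^{X_\tau}(X_{t-\tau} - X_0 \in \cdot)$ and that this reduces, on the set $\{|X_\tau - x| \leq R\}$, to the supremum $\alpha_{R,x}(t,u)$. The bookkeeping of the two-parameter family (time $t-\tau$ and starting point $X_\tau$) within a single supremum is what drives the definition of $\alpha_{R,x}$ as a sup over $s \leq t$.
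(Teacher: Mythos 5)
Your proof follows essentially the same route as the paper's: introduce the first passage time $\tau$ across the sphere of radius $u+v$, split the event $\{\tau \leq t\}$ into three pieces, and apply the strong Markov property at $\tau$ to produce the factor $\alpha_{R,x}(t,u)$. The only difference is cosmetic: the paper carves up $\{\sigma \leq t\}$ by first peeling off $\{|X_t-x|>v\}$ and $\{\tau_R^x \leq t\}$ and then deducing $|X_t - X_\sigma|\geq u$ on what remains, whereas you split directly on $|X_\tau - x|$ vs.\ $R$ and on $|X_t-X_\tau|$ vs.\ $u$. Both decompositions lead to the identical estimate $\alpha_{R,x}(t,u)\,\mbb{P}^x(A)$.

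There is one small but genuine inequality gap. As you correctly observe, right-continuity gives only $|X_\tau - x| \geq u+v$ at the passage time, not strict inequality. Combined with $|X_t - X_\tau| \leq u$ on $A_2$, the reverse triangle inequality yields $|X_t - x| \geq v$, not $>v$; so $A_2 \subseteq \{|X_t-x|\geq v\}$, which is weaker than the term $\mbb{P}^x(|X_t-x|>v)$ appearing in the statement. Your displayed chain writes ``$> (u+v)-u$'', but that strict inequality is unjustified. The fix is to push the boundary case into $A_3$: set $A_2 := A \cap \{|X_\tau - x|\leq R\}\cap\{|X_t-X_\tau|<u\}$ and $A_3 := A\cap\{|X_\tau-x|\leq R\}\cap\{|X_t-X_\tau|\geq u\}$. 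Then $A_2 \subseteq \{|X_t-x|>v\}$ holds strictly, and the strong Markov bound on $A_3$ still produces $\alpha_{R,x}(t,u)$, since that quantity is defined with the non-strict inequality $\mbb{P}^z(|X_s-z|\geq u)$. The paper sidesteps this by splitting on $\{|X_t-x|>v\}$ from the outset and deducing $|X_t-X_\sigma|\geq u$ on the complementary piece. With this small adjustment your argument is complete.
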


\begin{proof}
	Denote by $\tau_r^x$ the first exit time of $(X_t)_{t \geq 0}$ from the closed ball $\overline{B(x,r)}$ and set $\sigma := \tau_{u+v}^x$ for fixed $u,v>0$. We have
	\begin{align*}
		\mbb{P}^x \left( \sup_{s \leq t} |X_s-x| > u + v \right)
		&= \mbb{P}^x(\sigma \leq t) \\
		&\leq \mbb{P}^x(|X_t-x| > v) + \mbb{P}^x \left( |X_t-x| \leq v, \sigma \leq t, \tau_R^x>t \right) + \mbb{P}^x(\tau_R^x \leq t).
	\end{align*}
	By the strong Markov property,
	\begin{align*}
		\mbb{P}^x \left( |X_t-x| \leq v, \sigma \leq t, \tau_R^x>t \right) 
		&\leq \mbb{P}^x \left( |X_t-X_{\sigma}| \geq u, \sigma \leq t, |X_{\sigma}-x| \leq R \right)  \\
		&= \mbb{E}^x \left[ \I_{\{\sigma \leq t\}} \I_{\{|X_{\sigma}-x| \leq R\}} \mbb{P}^z(|X_{t-s}-z| \geq  u) \big|_{z=X_{\sigma},s=\sigma} \right] \\
		&\leq \alpha_{R,x}(t,u) \mbb{P}^x(\sigma \leq t),
	\end{align*}
	and so
	\begin{align*}
		\mbb{P}^x \left( \sup_{s \leq t} |X_s-x| > u + v \right) (1-\alpha_{R,x}(t,u)) 
		&\leq \mbb{P}^x(|X_t-x| > v) + \mbb{P}^x(\tau_R^x \leq t) \\
		&= \mbb{P}^x(|X_t-x| > v) + \mbb{P}^x \left( \sup_{s \leq t} |X_s-x|>R \right). \qedhere
	\end{align*}
\end{proof}

\begin{proof}[Proof of Proposition~\ref{up-1}] 
	\primolist Claim: \begin{equation}
		\sum_{n \in \mbb{N}} \int_0^1 \mbb{P}^x(|X_{s^n}-x| > \kappa f(s^n)) \log \frac{1}{s} \, ds < \infty.
		\label{up-st1}
	\end{equation}
	By a change of variables, $t=s^n$, $dt=n t^{(n-1)/n} \, ds$, we find that
	\begin{align*}
		\int_0^1 \mbb{P}(|X_{s^n}-x| > \kappa f(s^n)) \log \frac{1}{s} \, ds 
		= \int_0^1 \frac{1}{n^2} t^{1/n} \log \frac{1}{t} \mbb{P}^x(|X_t-x|> \kappa f(t)) \frac{1}{t} \, dt.
	\end{align*}
	As
	\begin{equation*}
		\sum_{n \in \mbb{N}}  \frac{1}{n^2} t^{1/n} \log \frac{1}{t} \leq 2, \qquad t \in (0,1),
	\end{equation*}
	cf.\ Lemma~\ref{appendix-2}, the monotone convergence theorem yields
	\begin{equation*}
		\sum_{n \in \mbb{N}} 	\int_0^1 \mbb{P}(|X_{s^n}-x| > \kappa f(s^n)) \log \frac{1}{s} \, ds 
		\leq 2 \int_0^1 \mbb{P}^x(|X_t-x|> \kappa f(t)) \frac{1}{t} \, dt,
	\end{equation*}
	and the latter integral is finite by \eqref{up-eq7}. This proves \eqref{up-st1}. In particular, there is a Lebesgue null set $N \subseteq (0,1)$ such that
	\begin{equation}
		\sum_{n \in \mbb{N}} \mbb{P}^x(|X_{s^n}-x|> \kappa f(s^n)) < \infty \fa s \in (0,1) \setminus N.
		\label{up-st2}
	\end{equation}
	\secundolist Fix $\eps>0$, and take $s \in (0,1) \setminus N$ such that $\limsup_{n \to \infty} f(s^n)/f(s^{n+1}) \leq (C+\eps)$ for the constant $C$ defined in \eqref{up-eq5}. By Lemma~\ref{up-3}, we have
	\begin{align*}
		&\mbb{P}^x \left( \sup_{r \leq s^n} |X_r-x| > (\kappa+\varrho) f(s^n) \right) \\
		&\quad\leq \frac{1}{1-\alpha_{R,x}(s^n,\varrho f(s^n))} \left[\mbb{P}^x(|X_{s^n}-x| > \kappa f(s^n)) + \mbb{P}^x \left( \sup_{u \leq s^n} |X_u-x| > R(s^n) \right) \right],
	\end{align*}
	where $\alpha_{R,x}(t,r) := \sup_{u \leq t} \sup_{|z-x| \leq R(t)} \mbb{P}^z(|X_u-z| \geq r)$. From \eqref{up-eq6} and the monotonicity of $f$, we see that there exists some $\delta \in (0,1)$ such that
	\begin{equation}
		\alpha_{R,x}(s^n,\varrho f(s^n))
		\leq \sup_{r \leq s^n} \sup_{|z-x| \leq R(s^n)} \mbb{P}^z(|X_{r}-z| \geq \varrho f(r))
		\leq 1-\delta \label{up-eq85}
	\end{equation}
	for $n \gg 1$. Thus, \begin{align*}
		\mbb{P}^x \left( \sup_{r \leq s^n} |X_r-x| > (\kappa+\varrho) f(s^n) \right) 
		\leq \frac{1}{\delta} \left[\mbb{P}^x(|X_{s^n}-x| > \kappa f(s^n)) + \mbb{P}^x \left( \sup_{u \leq s^n} |X_u-x| > R(s^n) \right) \right],
	\end{align*}
	which implies, by \eqref{up-st2} and \eqref{up-eq65},
	\begin{equation*}
		\sum_{n \in \mbb{N}} \mbb{P}^x \left( \sup_{r \leq s^n} |X_r-x| > (\kappa+\varrho) f(s^n) \right)  < \infty.
	\end{equation*}
	Applying the Borel-Cantelli lemma gives
	\begin{equation*}
		\limsup_{n \to \infty} \frac{1}{f(s^n)} \sup_{r \leq s^n} |X_r-x| \leq \varrho+\kappa \quad \text{$\mbb{P}^x$-a.s.}
	\end{equation*}
	If $t \in [s^{n+1},s^n)$ for some $n \gg 1$, then
	\begin{equation*}
		\frac{1}{f(t)} \sup_{r \leq t} |X_r-x|
		\leq \frac{1}{f(s^{n+1})} \sup_{r \leq s^n} |X_r-x|
		= \frac{f(s^n)}{f(s^{n+1})} \frac{1}{f(s^n)} \sup_{r \leq s^n} |X_r-x|,
	\end{equation*}
	and so
	\begin{equation*}
		\limsup_{t \to 0} \frac{1}{f(t)} \sup_{r \leq t} |X_r-x|
		\leq \limsup_{n \to \infty} \left( \frac{f(s^n)}{f(s^{n+1})} \frac{1}{f(s^n)} \sup_{r \leq s^n} |X_r-x| \right)
		\leq (C+\eps)(\kappa+\varrho)
	\end{equation*}
	$\mbb{P}^x$-almost surely. Since $\eps>0$ is arbitrary, this finishes the proof.
\end{proof}

Combining Theorem~\ref{up-4} with the maximal inequality \eqref{max-eq0}, we get the following criterion; see \cite[Proposition 9]{knop14} for a closely related result.

\begin{corollary} \label{up-9}
	Let $(X_t)_{t \geq 0}$ be a L\'evy-type process with symbol $q$. If $f:[0,1] \to [0,\infty)$ is a non-decreasing function such that
	\begin{equation}
		\int_0^1 \sup_{|z-x| \leq f(t)} \sup_{|\xi| \leq 1/(C f(t))} |q(z,\xi)| \, dt < \infty \label{up-eq25}
	\end{equation}
	for some constant $C>0$, then
	\begin{equation*}
		\lim_{t \to 0} \frac{1}{f(t)} \sup_{s \leq t} |X_s-x| = 0 \quad \text{$\mbb{P}^x$-a.s.}
	\end{equation*}
\end{corollary}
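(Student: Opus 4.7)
The plan is to combine Theorem~\ref{up-4} with the maximal inequality in Proposition~\ref{max-0}, applying them along a sequence $\varepsilon_k \downarrow 0$ so that we upgrade the ``$\leq 4$'' conclusion of Theorem~\ref{up-4} to a ``$=0$'' conclusion. Concretely, fix $\varepsilon \in (0,1]$ and apply Theorem~\ref{up-4} with $\tilde f(t) := (\varepsilon/4)f(t)$, which is still non-decreasing. If the integrability hypothesis \eqref{up-eq3} of Theorem~\ref{up-4} holds for $\tilde f$, then
\begin{equation*}
	\limsup_{t \to 0} \frac{1}{f(t)} \sup_{s \leq t} |X_s - x|
	= \frac{\varepsilon}{4} \limsup_{t \to 0} \frac{1}{\tilde f(t)} \sup_{s \leq t} |X_s - x|
	\leq \varepsilon \qquad \text{$\mathbb{P}^x$-a.s.},
\end{equation*}
and intersecting the resulting events along $\varepsilon_k = 1/k$ gives the claim.

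It remains to verify \eqref{up-eq3} for $\tilde f$ using the assumption \eqref{up-eq25}. By Proposition~\ref{max-0} applied with $r = \tilde f(t)$, for any $z$ with $|z-x| \leq \tilde f(t)$ we have
\begin{equation*}
	\mathbb{P}^z\!\left(\sup_{s \leq t}|X_s - z| \geq \tilde f(t)\right)
	\leq c\, t \sup_{|w-z| \leq \tilde f(t)} \sup_{|\xi| \leq 1/\tilde f(t)} |q(w,\xi)|,
\end{equation*}
so after dividing by $t$ and taking the supremum over $|z-x| \leq \tilde f(t)$, we obtain (using the triangle inequality $|w-x| \leq 2\tilde f(t) \leq f(t)$)
\begin{equation*}
	\frac{1}{t} \sup_{|z-x| \leq \tilde f(t)} \mathbb{P}^z\!\left(\sup_{s \leq t}|X_s - z| \geq \tilde f(t)\right)
	\leq c \sup_{|w-x| \leq f(t)} \sup_{|\xi| \leq 4/(\varepsilon f(t))} |q(w,\xi)|.
\end{equation*}
The key discrepancy now is that $4/(\varepsilon f(t))$ exceeds the range $1/(Cf(t))$ appearing in \eqref{up-eq25}. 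This is handled by the subadditivity of $\xi \mapsto \sqrt{|q(w,\xi)|}$ recalled in \eqref{def-eq11}: iterating gives $|q(w,\xi)| \leq n^2 |q(w,\xi/n)|$ for every $n \in \mathbb{N}$, so choosing $n = \lceil 4C/\varepsilon \rceil$ yields
\begin{equation*}
	\sup_{|\xi| \leq 4/(\varepsilon f(t))} |q(w,\xi)|
	\leq n^2 \sup_{|\xi| \leq 1/(Cf(t))} |q(w,\xi)|.
\end{equation*}

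Combining the two displays, the integrand of \eqref{up-eq3} (with $f$ replaced by $\tilde f$) is bounded by a constant $C_\varepsilon := cn^2$ times the integrand in \eqref{up-eq25}, hence is integrable by hypothesis. This verifies \eqref{up-eq3} for $\tilde f$, and completes the proof. The only non-routine point is the rescaling step via subadditivity of $\sqrt{|q|}$; everything else is a direct chaining of the two cited results.
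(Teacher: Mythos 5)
Your proposal is correct and follows essentially the same route as the paper's own proof: both verify the hypothesis of Theorem~\ref{up-4} for a scaled-down function via the maximal inequality of Proposition~\ref{max-0}, handle the resulting discrepancy in the $\xi$-range through the subadditivity of $\xi\mapsto\sqrt{|q(z,\xi)|}$, and then let the scaling factor tend to zero. The only cosmetic difference is that the paper scales by $C2^{-n}$ (iterating the doubling bound $|q(z,2\xi)|\le 4|q(z,\xi)|$), while you scale by $\varepsilon/4$ and use the general $n$-fold subadditivity bound $|q(w,\xi)|\le n^2|q(w,\xi/n)|$.
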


\begin{proof}
	If the integral in \eqref{up-eq25} is finite some $C>0$, then it is finite for arbitrary small $C>0$. \emph{Indeed:} Since $\xi \mapsto \sqrt{|q(z,\xi)|}$ is subadditive, we have
	\begin{align*}
		|q(z,2\xi)| =|q(z,\xi+\xi)| 
		\leq \left(\sqrt{|q(z,\xi)|}+\sqrt{|q(z,\xi)|}\right)^2 
		= 4 |q(z,\xi)|,
	\end{align*}
	which implies that
	\begin{align*}
		\int_0^1 \sup_{|z-x| \leq f(t)} \sup_{|\xi| \leq 1/(2^{-n} C f(t))} |q(z,\xi)| \, dt
		\leq 4^n \int_0^1 \sup_{|z-x| \leq f(t)} \sup_{|\xi| \leq 1/(C f(t))} |q(z,\xi)| \, dt < \infty
	\end{align*}
	for every $n \in \mbb{N}$. Applying the maximal inequality \eqref{max-eq0} and Theorem~\ref{up-4} yields
	\begin{align*}
		\limsup_{t \to 0} \frac{1}{f(t)} \sup_{s \leq t} |X_s-x| \leq 4 C 2^{-n} \quad \text{$\mbb{P}^x$-a.s.}
	\end{align*}
	Letting $n \to \infty$ proves the assertion.
\end{proof}

We conclude this section with the following result on the growth of sample paths of L\'evy-type processes.

\begin{proposition} \label{up-11}
	Let $(X_t)_{t \geq 0}$ be a L\'evy-type process with symbol $q$. Then: \begin{enumerate}
		\item\label{up-11-i} $\limsup_{t \to 0} t^{-\kappa} \sup_{s \leq t} |X_s-x| = 0$ $\mbb{P}$-a.s.\ for any $\kappa < \tfrac{1}{2}$.
		\item\label{up-11-ii} If $x \in \mbb{R}^d$ is such that \begin{equation}
			\sup_{|z-x| \leq R} \sup_{|\xi| \leq r} |q(z,\xi)| \leq c \frac{r^2}{|\log r|^{1+\eps}}, \qquad r \gg 1, \label{up-eq31}
		\end{equation}
		for some constants $R>0$, $c>0$ and $\eps>0$, then
		\begin{equation*}
			\limsup_{t \to 0} \frac{1}{\sqrt{t \log |\log t|}} \sup_{s \leq t} |X_s-x| = 0 \quad \text{$\mbb{P}^x$-a.s.}
		\end{equation*}
	\end{enumerate}
\end{proposition}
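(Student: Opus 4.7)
The plan is to reduce both parts to Corollary~\ref{up-9} by exhibiting the appropriate function $f$ and verifying
\begin{equation*}
\int_0^1 \sup_{|z-x| \leq f(t)} \sup_{|\xi| \leq 1/(Cf(t))} |q(z,\xi)| \, dt < \infty
\end{equation*}
for some constant $C>0$; once this holds, the conclusion $\lim_{t \to 0} f(t)^{-1}\sup_{s \leq t}|X_s-x| = 0$ is immediate.

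For part \ref{up-11-i}, I would take $f(t) = t^{\kappa}$ with $\kappa \in (0,1/2)$; the case $\kappa \leq 0$ is trivial from the \cadlag continuity of the paths at $0$. Combining the sub-quadratic growth bound $|q(z,\xi)| \leq 2 \sup_{|\eta| \leq 1}|q(z,\eta)|(1+|\xi|^{2})$ recalled in Section~\ref{def} with the local boundedness of $q$ guaranteed by \ref{ltp-3}, the integrand is dominated by $M(1+C^{-2}t^{-2\kappa})$ for some $M=M(x)<\infty$, which is integrable on $(0,1)$ precisely because $2\kappa<1$.

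For part \ref{up-11-ii}, I would take $f(t) = \sqrt{t\log|\log t|}$ near $0$ and extend it to a non-decreasing function on $[0,1]$; monotonicity near $0$ follows from $(f^{2})'(t) = \log|\log t| - 1/|\log t| \to +\infty$ as $t \to 0$. Since $r := 1/(Cf(t)) \to \infty$, assumption~\eqref{up-eq31} yields, for $t$ small,
\begin{equation*}
\sup_{|z-x| \leq f(t)} \sup_{|\xi| \leq 1/(Cf(t))} |q(z,\xi)|
\leq \frac{c'}{f(t)^{2}\, |\log f(t)|^{1+\eps}}.
\end{equation*}
Using $f(t)^{2}=t\log|\log t|$ together with the asymptotic $|\log f(t)| \sim \tfrac{1}{2}|\log t|$ as $t \to 0$, the integrand is of order $(t \log|\log t| \cdot |\log t|^{1+\eps})^{-1}$; the substitution $u=|\log t|$ then reduces convergence to that of $\int^{\infty} u^{-1-\eps}(\log u)^{-1}\,du$, which is finite for any $\eps>0$.

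The only real obstacle is of a cosmetic nature: the non-decreasing hypothesis of Corollary~\ref{up-9} forces me to engineer $f(t)=\sqrt{t\log|\log t|}$ into a globally monotone function on $[0,1]$, and I must justify the asymptotics of $\log f(t)$ carefully; the integral estimates themselves are routine.
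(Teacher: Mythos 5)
Your proposal is correct and follows essentially the same route as the paper: in both parts one reduces to Corollary~\ref{up-9} with the same choice of $f$, using the sub-quadratic bound $|q(z,\xi)| \leq 2\sup_{|\eta|\leq 1}|q(z,\eta)|(1+|\xi|^2)$ plus local boundedness for \ref{up-11-i}, and the hypothesis~\eqref{up-eq31} for \ref{up-11-ii}. The only difference is that you spell out the convergence of the final integral (via $|\log f(t)|\sim \tfrac12|\log t|$ and the substitution $u=|\log t|$) where the paper simply asserts that the integral is finite, and you correctly flag the cosmetic issue of extending $f$ to a globally monotone function on $[0,1]$, which the paper passes over silently by restricting the integration to $(0,1/e^e)$.
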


Khintchine \cite{khin39} (see also \cite[Appendix, Theorem 4]{skorohod91}) showed that any L\'evy process without Gaussian component satisfies
\begin{equation*}
	\limsup_{t \to 0} \frac{|X_t|}{\sqrt{t \log |\log t|}} = 0 \quad \text{a.s.}
\end{equation*}
One might expect that an analogous result holds for L\'evy-type processes but this does not seem to follow from our results; note that \eqref{up-eq31} is stronger than assuming that $(X_t)_{t \geq 0}$ has no Gaussian component, cf.\ \cite[Lemma A.3]{ihke}.

\begin{proof}[Proof of Proposition~\ref{up-11}]
	\primolist Because of the subadditivity of $\xi \mapsto \sqrt{|q(x,\xi|}$, it holds that
	\begin{equation*}
		|q(x,\xi)| \leq \sup_{|\eta| \leq 1} |q(x,\eta)| (1+|\xi|^2),
	\end{equation*}
	cf.\ \cite[Theorem 6.2]{barca}, and so 
	\begin{equation*}
		\sup_{|z-x| \leq 1} \sup_{|\xi| \leq r} |q(z,\xi)| \leq c' (1+r^2)
	\end{equation*}
	for some constant $c'>0$. Hence,
	\begin{equation*}
		\int_0^1 \sup_{|z-x| \leq 1} \sup_{|\xi| \leq 1/(C t^{\kappa})} |q(z,\xi)| \, dt < \infty
	\end{equation*}
	for any $\kappa \in (0,\tfrac{1}{2})$ and $C>0$. By Corollary~\ref{up-9}, this proves \ref{up-11-i}.
	\secundolist Set $f(t) := \sqrt{t \log \log \frac{1}{t}}$, then, by \eqref{up-eq31},
	\begin{equation*}
		\int_0^{1/e^e} \sup_{|z-x| \leq R} \sup_{|\xi| \leq 1/(C f(t))} |q(z,\xi)| \, dt
		\leq \frac{c}{C^2} \int_0^{1/e^e} \frac{1}{t \log \log \frac{1}{t}} \frac{1}{|\log \sqrt{C^2 t \log \log \frac{1}{t}}|^{1+\eps}} \, dt
	\end{equation*}
	for every $C>0$, and the latter integral is finite. Corollary~\ref{up-9} gives the assertion.
\end{proof}

In the remainder of the article, we prove the results announced in Section~\ref{main}.

\section{Proofs of Theorem~\ref{main-3} and Theorem~\ref{main-5}} \label{p}

For the proof of Theorem~\ref{main-3} and Theorem~\ref{main-5}, we need the following result which links two of our integral conditions.

\begin{lemma} \label{p-1}
	Let $\psi: \mbb{R}^d \to \mbb{C}$ be a continuous negative definite function with L\'evy triplet $(b,0,\nu)$, and set
	\begin{equation*}
		\psi^*(r) := \sup_{|\xi| \leq r} \re \psi(\xi), \qquad r>0.
	\end{equation*}
	If $f:[0,1] \to [0,\infty)$ is a non-decreasing function, then the implication
	\begin{equation*}
		\int_0^1 \nu(\{|y| \geq f(t)\}) \, dt < \infty \implies \int_0^1 \psi^* \left( \frac{1}{f(t)} \right) \, dt < \infty
	\end{equation*}
	holds in each of the following two cases.
	\begin{enumerate}[label*=\upshape (A\arabic*),ref=\upshape A\arabic*] 
		\item The L\'evy measure $\nu$ satisfies \begin{equation*}
				\limsup_{r \to 0} \frac{\int_{|y| \leq r} |y|^2 \, \nu(dy)}{r^2 \nu(\{|y| > r\})} < \infty. 
			\end{equation*}
		\item There is a constant $c>0$ such that \begin{equation*}
			\int_{r<f(t)} \frac{1}{f(t)^2} \, dt \leq c \frac{f^{-1}(r)}{r^2}, \qquad r \in (0,1).
		\end{equation*}
	\end{enumerate}
\end{lemma}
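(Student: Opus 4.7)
The plan is to derive a universal pointwise estimate for $\psi^*$ in terms of tails and truncated second moments of $\nu$, and then to handle the small-jump contribution via either (A1) or (A2).

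First I would reduce to the case $f(t) \downarrow 0$ as $t \downarrow 0$: if $f(t) \geq \eps > 0$ for all $t$, then $\nu(\{|y| \geq f(t)\}) \leq \nu(\{|y| \geq \eps\}) < \infty$ and $\psi^*(1/f(t)) \leq \psi^*(1/\eps) < \infty$, so the implication is trivial. Next, starting from the L\'evy--Khintchine representation (with $Q=0$) and the elementary estimate $1-\cos u \leq \min\{u^2/2, 2\}$, I would derive the key pointwise bound
\begin{equation*}
	\psi^*(R) \leq \frac{R^2}{2} \int_{|y| \leq 1/R} |y|^2 \, \nu(dy) + 2 \nu(\{|y| > 1/R\}), \qquad R>0,
\end{equation*}
by splitting the integral $\int(1-\cos(y\cdot\xi))\,\nu(dy)$ at $|y|=1/R$ and taking the sup over $|\xi|\leq R$. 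Substituting $R=1/f(t)$ and integrating over $t \in (0,1)$, the tail term is controlled by the hypothesis, so the problem reduces to showing that
\begin{equation*}
	I := \int_0^1 \frac{1}{f(t)^2} \int_{|y| \leq f(t)} |y|^2 \, \nu(dy) \, dt < \infty
\end{equation*}
under either (A1) or (A2).

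Under (A1), for $t$ sufficiently small, $f(t)$ is small, so the pointwise bound $\int_{|y|\leq f(t)} |y|^2 \, \nu(dy) \leq M f(t)^2 \, \nu(\{|y| > f(t)\})$ applies. The integrand of $I$ is then dominated by $M \nu(\{|y| > f(t)\})$, and so $I$ is finite by hypothesis, up to a finite contribution from the (compact) range of $t$ where $f(t)$ is bounded away from $0$.

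Under (A2), I would apply Fubini's theorem to swap the order of integration, giving
\begin{equation*}
	I = \int_{y \neq 0} |y|^2 \left( \int_0^1 \frac{\I_{\{f(t) \geq |y|\}}}{f(t)^2} \, dt \right) \nu(dy).
\end{equation*}
The inner integral (which differs from $\int_{|y|<f(t)} f(t)^{-2}\,dt$ only on a Lebesgue-null plateau of $f$ that can be absorbed into an $\eps$-perturbation of $|y|$) is bounded by $M f^{-1}(|y|)/|y|^2$ by (A2), so $I \leq M \int f^{-1}(|y|) \, \nu(dy)$ up to the finite contribution from $|y| \geq 1$. On the other hand, another application of Fubini shows
\begin{equation*}
	\int_0^1 \nu(\{|y| \geq f(t)\}) \, dt = \int_{y \neq 0} \min\{f^{-1}(|y|),1\} \, \nu(dy),
\end{equation*}
which is finite by hypothesis; splitting $|y| \leq 1$ and $|y| > 1$ and using that $\nu$ is finite away from the origin, this gives $\int f^{-1}(|y|)\,\nu(dy) < \infty$, completing the proof. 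The main technical obstacle is the bookkeeping in the (A2) case: handling the strict-versus-non-strict inequality defining $\{f(t) > r\}$ on plateaus of $f$, and ensuring that $f^{-1}(r)$ is defined in a way that makes (A2) applicable to the inner integral produced by Fubini; all of these issues are resolved by a standard monotonicity/limit argument.
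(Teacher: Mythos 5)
Your proof is correct and follows essentially the same route as the paper's: both bound $\psi^*(1/f(t))$ by the truncated second moment term plus the tail term via the L\'evy--Khintchine representation, then under (A1) dominate the second-moment term pointwise by the tail, and under (A2) control it via Tonelli and the integral hypothesis. The only cosmetic difference is that for (A1) the paper cites the two-sided comparability $\psi^*(1/r)\asymp K(r)+G(r)$ from the literature, whereas you re-derive the (sufficient) one-sided bound directly from $1-\cos u\le\min\{u^2/2,2\}$, which lets you treat both cases from a single common estimate.
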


\begin{remark} \label{p-3} 
\begin{enumerate}[wide, labelwidth=!, labelindent=0pt]
	\item\label{p-3-i} There are several equivalent formulations of condition \eqref{A1} in terms of so-called concentration functions. If we define, following \cite{pruitt81},
	\begin{equation*}
		G(r) := \nu(\{|y| > r\}) \quad \text{and} \quad K(r) := \frac{1}{r^2} \int_{|y| \leq r} |y|^2 \, \nu(dy),
	\end{equation*}
	then \eqref{A1} can be stated equivalently in the following way:
	\begin{equation*}
		\limsup_{r \to 0} \frac{K(r)}{G(r)}<\infty.
	\end{equation*}
	Set
	\begin{equation*}
		h(r) := \int_{y \neq 0} \min\left\{1,\frac{|y|^2}{r^2}\right\} \, \nu(dy) = K(r)+G(r),
	\end{equation*}
	then we see that
	\begin{equation}
		\eqref{A1} \iff \liminf_{r \to 0} \frac{G(r)}{h(r)} >0. \label{p-eq4}
	\end{equation}
	Since 
	\begin{equation*}
		\frac{1}{c} h(r) \leq \psi^* \left( \frac{1}{r} \right) \leq c h(r), \qquad r>0,
	\end{equation*}
	for some constant $c>0$, depending only on the dimension $d$, see e.g.\ \cite[Lemma 5.1 and p.~595]{rs-growth} or \cite[Lemma 4]{grz}, it follows that
	\begin{equation}
		\eqref{A1} \iff \liminf_{r \to 0} \frac{G(r)}{\psi^*(1/r)} = \liminf_{r \to 0} \frac{\nu(\{|y| > r\})}{\psi^*(1/r)}>0. \label{p-eq5}
	\end{equation}
	Moreover, there is a sufficient condition for \eqref{A1} in terms of the function 
	\begin{equation*}
		I(r) := \int_{y \neq 0} \min\{r^2,|y|^2\} \, \nu(dy)=r^2 h(r);
	\end{equation*}
	namely, if
	\begin{equation}
		 \liminf_{r \to 0} \frac{I(2r)}{I(r)}>1, \label{p-eq7}
	\end{equation}
	then \eqref{A1} holds. \emph{Indeed:} By Tonelli's theorem,
	\begin{align*}
		I(r) 
		= \int_{y \neq 0} \int_0^{\min\{r^2,|y|^2\}} \, dz \, \nu(dy)
		&= \int_{\mbb{R}} \int_{y \neq 0} \I_{\{|z| < r^2\}} \I_{\{|z| < |y|^2\}} \, \nu(dy) \, dz  \\
		&= \int_0^{r^2} \nu(\{|y| > \sqrt{z}\}) \, dz.
	\end{align*}
	Thus, 
	\begin{equation*}
		I(2r) = I(r) + \int_{r^2}^{4r^2} \nu(\{|y| > \sqrt{z}\}) \, dz \leq I(r) + 3r^2 \nu(\{ |y| > r\}).
	\end{equation*}
	Consequently, \eqref{p-eq7} implies that
	\begin{equation*}
		1 < \liminf_{r \to 0} \frac{I(2r)}{I(r)} \leq 1+ \liminf_{r \to 0} \frac{3r^2 \nu(\{|y| > r\})}{I(r)}.
	\end{equation*}
	As $I(r)=r^2 h(r)$, this is equivalent to \eqref{p-eq4} and hence to \eqref{A1}. Let us mention that a condition similar to \eqref{p-eq7} appears in the monograph \cite{bertoin} by Bertoin in the study of upper functions for sample paths of subordinators.
	\item\label{p-3-ii} If $\nu$ is the L\'evy measure of a one-dimensional L\'evy process and $\nu(\{|y| \geq r\})$ grows faster than $\log r$ as $r \to 0$, then \eqref{A1} implies that (the law of) $X_t$ has a smooth density $p_t \in C_b^{\infty}(\mbb{R})$ for every $t>0$, see \cite[Section 5]{kallenberg} and also \cite[p.~127]{knop13}.
\end{enumerate}
\end{remark}

\begin{proof}[Proof of Lemma~\ref{p-1}]
	\primolist Suppose that \eqref{A1} holds. Then there is some constant $c>0$ such that
	\begin{equation*}
		\psi^* \left( \frac{1}{r} \right) 
		= \sup_{|\xi| \leq 1/r} \re \psi(\xi)
		\leq c \nu(\{|y| > r\})
	\end{equation*}
	for $r>0$ small, cf.\ \eqref{p-eq5}. Since we may assume without loss of generality that $f(t) \to 0$ as $t \downarrow 0$, we find that
	\begin{equation*}
		\int_0^{\delta} \psi^* \left( \frac{1}{f(t)} \right) \, dt \leq c \int_0^{\delta} \nu(\{|y| > f(t)\}) \, dt
	\end{equation*}
	for some $\delta>0$. As $\psi$ is bounded on compact sets, this proves the assertion.
	\secundolist Suppose that \eqref{A2} holds. From \begin{equation*}
		\psi^*(r) = \sup_{|\xi| \leq r} \re \psi(\xi) \leq 2 \int_{y \neq 0} \min\{1,|y|^2 r^2\} \, \nu(dy),
	\end{equation*}
	we get
	\begin{align}
		\int_0^1 \psi^* \left( \frac{1}{f(t)} \right) \, dt
		&\leq 2 \int_0^1 \frac{1}{f(t)^2} \int_{|y| \leq f(t)} y^2 \, \nu(dy) \, dt + 2 \int_0^1 \nu(\{|y| > f(t)\}) \, dt. \label{p-eq9} 
	\end{align}
	The second integral on the right-hand side of \eqref{p-eq9} is finite by assumption, and so it suffices to show that the first integral
	\begin{equation*}
		J := \int_0^1 \frac{1}{f(t)^2} \int_{|y| \leq f(t)} y^2 \, \nu(dy) \, dt
	\end{equation*}
	is finite. By Tonelli's theorem and \eqref{A2}, we have
	\begin{align*}
		J 
		= \int_{y \neq 0} |y|^2 \int_{f(t) \geq |y|} \frac{1}{f(t)^2} \, dt \, \nu(dy)
		\leq c \int_{y \neq 0} f^{-1}(|y|) \, \nu(dy).
	\end{align*}
	Since $f$ is non-decreasing, we find by another application of Tonelli's theorem that
	\begin{align*}
		J 
		\leq c \int_{y \neq 0} \int_{t \leq f^{-1}(|y|)} \, dt \, \nu(dy) 
		&\leq c \int_{y \neq 0} \int_{f(t) \leq |y|} \, dt \, \nu(dy)
		= c \int_{0}^1 \nu(\{|y| \geq f(t)\}) \, dt<\infty. \qedhere
	\end{align*}
\end{proof}

\begin{proof}[Proof of Theorem~\ref{main-3}]
	\ref{main-3-i} $\implies$ \ref{main-3-ii}: If $\int_0^1 \nu(\{|y| \geq  c f(t)\}) \, dt < \infty$, then it follows from Lemma~\ref{p-1} and the sector condition that
	\begin{equation*}
		\int_0^1 \sup_{|\xi| \leq 1/(c f(t))} |\psi(\xi)| \, dt < \infty.
	\end{equation*}
	By the subadditivity of $\xi \mapsto \sqrt{|\psi(\xi)|}$, this implies that
	\begin{equation*}
		\int_0^1 \sup_{|\xi| \leq 1/(2^{-n} c f(t))} |\psi(\xi)| \, dt < \infty.
	\end{equation*}
	for all $n \in \mbb{N}$, see the proof of Corollary~\ref{up-9}. Since the integral expression is monotone w.r.t.\ $c$, we conclude that
	\begin{equation*}
		\int_0^1 \sup_{|\xi| \leq 1/( c f(t))} |\psi(\xi)| \, dt < \infty \fa c>0.
	\end{equation*}
	\ref{main-3-ii} $\implies$ \ref{main-3-iii}: This is clear from the maximal inequality, cf.\ Proposition~\ref{max-0}. \par
	\ref{main-3-iii}$\iff$\ref{main-3-iv}: The implication $\ref{main-3-iii}\implies\ref{main-3-iv}$ is obvious. The other direction is immediate from Etemadi's inequality, see e.g.\ \cite[Theorem 22.5]{billingsley} or \cite[Theorem 7.6]{gut}, which shows that
	\begin{equation*}
		\mbb{P} \left( \sup_{s \leq t} |X_s| \geq 3r \right) \leq 3 \mbb{P}(|X_t| \geq r), \qquad r>0,\;t>0.
	\end{equation*}
	\ref{main-3-iii} $\implies$ \ref{main-3-v}: This is immediate from Theorem~\ref{up-4}; note that the supremum in \eqref{up-eq3} breaks down because L\'evy processes are homogenous in space. \par
	\ref{main-3-v}$\implies$\ref{main-3-vi}$\implies$\ref{main-3-vii}: Obvious. \par
	\ref{main-3-vii}$\implies$\ref{main-3-i}: In dimension $d=1$, this follows from \cite[Proposition 4.2]{bertoin08}. The following reasoning works in any dimension $d \geq 1$. By Blumenthal's 0-1 law, there exists a constant $C>0$ such that
	\begin{equation}
		\limsup_{t \to 0} \frac{1}{f(t)} |X_t| \leq \frac{C}{2} \quad \text{almost surely.} \label{p-eq13}
	\end{equation}
	Suppose that $\int_0^1 \nu(\{|y| \geq 2C f(t)\}) \, dt$ is infinite. As $f$ is non-decreasing, the series test yields
	\begin{equation}
		\sum_{n =2}^{\infty} \nu(\{|y| \geq 2C f(1/n)\}) \left( \frac{1}{n-1}-\frac{1}{n}\right)=\infty. \label{p-eq14}
	\end{equation}
	The random variables
	\begin{equation*}
		N_{s,t}^{(r)} := \sharp \{u \in (s,t]; |\Delta X_u| \geq r\}, \qquad 0\leq s<t, \, r>0,
	\end{equation*}
	are Poisson distributed with parameter $(t-s) \nu(\{|y| \geq r\})$, and so $Y_n := N_{1/(n+1),1/n}^{2Cf(1/n)}$ are Poisson distributed with parameter $\lambda_n := \nu(\{|y| \geq 2Cf(1/n)\}) \left( \frac{1}{n}-\frac{1}{n+1} \right)$. Using the elementary estimate $1-e^{-x} \geq x/(1+x)$, we get
	\begin{equation*}
		\sum_{n \in \mbb{N}} \mbb{P}(Y_n \geq 1)
		= \sum_{n \in \mbb{N}} \left( 1- e^{-\lambda_n} \right)
		\geq \sum_{n \in \mbb{N}} \frac{\lambda_n}{1+\lambda_n} 
		\geq \sum_{n \in \mbb{N}} \min\left\{\lambda_n,\frac{1}{2} \right\}
		= \infty;
	\end{equation*}
	here we use that \eqref{p-eq14} implies $\sum_{n \in \mbb{N}} \lambda_n = \infty$ because $\frac{1}{n-1}-\frac{1}{n} \approx \frac{1}{n^2} \approx \frac{1}{n+1}-\frac{1}{n}$ for $n \gg 1$. 
	Since the random variables $Y_n$, $n \in \mbb{N}$, are independent, the Borel--Cantelli lemma shows that the event $\{Y_n \geq 1$ infinitely often$\}$ has probability $1$. Thus, with probability $1$ there are infinitely many $n \in \mbb{N}$ such that $|\Delta X_u| \geq 2C f(1/n)$ for some $u \in [\frac{1}{n+1},\frac{1}{n}]$. Since either $|X_u| \geq C f(1/n) \geq Cf(u)$ or $|X_{u-}| \geq Cf(1/n) \geq Cf(u-)$ for any such $u \in [\frac{1}{n+1},\frac{1}{n}]$, we conclude that
	\begin{equation*}
		\limsup_{t \to 0} \frac{1}{f(t)} |X_t| \geq C \quad \text{almost surely,}
	\end{equation*}
	which contradicts \eqref{p-eq13}. Hence, $\int_0^1 \nu(\{|y| \geq 2Cf(t)\}) \, dt < \infty$. See (the proof of) Theorem~\ref{main-9} for an alternative reasoning. \par
	The random variables $\limsup_{t \to 0} \frac{1}{f(t)} |X_t|$ and $\limsup_{t \to 0} \frac{1}{f(t)} \sup_{s \leq t} |X_s|$ are $\mathcal{F}_{0+}$-measurable, and therefore Blumenthal's 0-1-law shows that the events in \ref{main-3-v}-\ref{main-3-vii} have probability $0$ or $1$. Consequently, 'almost surely' may be replaced by 'with positive probability' in each of the statements.
\end{proof}

\begin{proof}[Proof of Theorem~\ref{main-5}] 
	\ref{main-5-i}$\implies$\ref{main-5-ii}: Without loss of generality, $f(t) \to 0=f(0)$ as $t \downarrow 0$; otherwise the assertion is immediate from the local boundedness of the symbol, cf.\ \eqref{def-eq13}. It follows from \eqref{A1'} that
	\begin{equation*}
		\liminf_{r \to 0} \inf_{|z-x| \leq R} \frac{\nu(z,\{|y|>r\})}{\sup_{|\xi| \leq 1/r} \re q(z,\xi)} >0,
	\end{equation*}
	see Remark~\ref{p-3}\ref{p-3-i}. Since the sector condition holds (with a constant not depending on $z \in \overline{B(x,R)}$), we find that
	\begin{equation*}
			\liminf_{r \to 0} \inf_{|z-x| \leq R} \frac{\nu(z,\{|y|>r\})}{\sup_{|\xi| \leq 1/r} |q(z,\xi)|} >0,
	\end{equation*}
	i.e.\ there are constants $K>0$ and $\delta>0$ such that
	\begin{equation*}
		\sup_{|\xi| \leq 1/r} |q(z,\xi)| \leq K \nu(z,\{|y|>r\}), \qquad z \in \overline{B(x,R)},
	\end{equation*}
	for $r \leq \delta$. As $f(t) \to 0$ as $t \downarrow 0$, this implies
	\begin{equation*}
		\sup_{|z-x| \leq f(t)} \sup_{|\xi| \leq 1/(cf(t))} |q(z,\xi)|
		\leq K \sup_{|z-x| \leq f(t)} \nu(z,\{|y| > c f(t)\})
	\end{equation*}
	for $t>0$ small. Integrating with respect to $t$ and using the local boundedness of $q$, we conclude that
	\begin{equation*}
		\int_0^1 \sup_{|z-x| \leq f(t)} \sup_{|\xi| \leq 1/(cf(t))} |q(z,\xi)| \, dt < \infty.
	\end{equation*}
	\ref{main-5-ii}$\implies$\ref{main-5-iii}: If the integral in \ref{main-5-ii} is finite for some $\eps>0$, then it is finite for all $\eps>0$; this follows from the subadditivity of $\xi \mapsto \sqrt{|q(z,\xi)|}$, see the proof of Corollary~\ref{up-9}. The implication \ref{main-5-ii}$\implies$\ref{main-5-iii} is now immediate from the maximal inequality \eqref{max-eq0}. \par
	\ref{main-5-iii}$\implies$\ref{main-5-iv}: cf.\ Theorem~\ref{up-4}.
\end{proof}

\section{Proof of the converse and the lower growth bounds} \label{conv}

In this section, we present the proofs of Proposition~\ref{main-7}, Proposition~\ref{main-8} and Theorem~\ref{main-9}.

\begin{proof}[Proof of Proposition~\ref{main-7}] 
	\primolist 	If	
	\begin{equation*}
		\int_0^1 \sup_{|z-x| \leq f(t)} \mbb{P}^z \left( \sup_{s \leq t} |X_s-z| \geq f(t) \right) \frac{1}{t} \, dt < \infty,
	\end{equation*}
	then Theorem~\ref{up-4} shows that $\limsup_{t \to 0} \frac{1}{f(t)} \sup_{s \leq t} |X_s-x|\leq 4$ $\mbb{P}^x$-almost surely. Consequently, $\mbb{P}^x(A_k) \to 1$ for $A_k := \{\forall t \leq 1/k\::\: \frac{1}{f(t)} \sup_{s \leq t} |X_s-x| < 5\}$. Hence,
	\begin{equation*}
		\sup_{t \leq 1/k} \mbb{P}^x \left( \sup_{s \leq t} |X_s-x|\geq 5 f(t)\right) 
		\leq \mbb{P}^x(A_k^c) 
		\xrightarrow[k \to \infty]{} 0.
	\end{equation*}
	 By Corollary~\ref{max-5}, this implies
	 \begin{equation*}
	 	\mbb{P}^x \left( \sup_{s \leq t} |X_s-x|\geq 5 f(t)\right) 
		\geq \frac{1}{2} t G(x,10f(t)), \qquad t \leq \frac{1}{k},
	\end{equation*}
	for $k \gg1$ sufficiently large, where $G(x,r) := \inf_{|z-x| \leq r} \nu(z,\{|y| > r\})$. Dividing both sides by $t$ and integrating over $t \in (0,1)$ yields $\int_0^1 G(x,10f(t)) \, dt < \infty$, which proves \ref{main-7-i}.
	\secundolist Suppose that
	\begin{equation*}
		\int_0^1 G(x,C f(t)) \, dt < \infty
	\end{equation*}
	for some $C>0$ and $G(x,r)$ as in \primo, and assume that \eqref{A1'} holds for some $R>0$. It follows from Remark~\ref{p-3} that there is some constant $\gamma>0$ such that
	\begin{equation*}
		\liminf_{r \to 0} \inf_{|z-x| \leq R} \frac{\nu(z,\{|y| > r\})}{\sup_{|\xi| \leq 1/r} \re q(z,\xi)} \geq \gamma.
	\end{equation*}
	Thus, 
	\begin{equation*}
		\inf_{|z-x| \leq Cf(t)} \sup_{|\xi| \leq 1/(Cf(t))}  \re q(z,\xi) \leq \frac{1}{\gamma} G(x,C f(t))
	\end{equation*}
	for $t>0$ small. Since the symbol $q$ is bounded on compact sets, integration with respect to $t$ gives
	\begin{equation*}
		\int_0^1 \inf_{|z-x| \leq C f(t)} \sup_{|\xi| \leq 1/(Cf(t))}  \re q(z,\xi) \, dt < \infty.
	\end{equation*}
	Because of the subadditivity of the mapping $\xi \mapsto \sqrt{\re q(z,\xi)}$, we may replace $1/(Cf(t))$ by $c/f(t)$ for any $c>0$, compare the proof of Corollary~\ref{up-9}.
\end{proof}

\begin{proof}[Proof of Proposition~\ref{main-8}]
	Let $f:[0,1] \to [0,\infty)$ be such that
	\begin{equation}
		\limsup_{t \to 0} t \inf_{|z-x| \leq  R f(t)} \sup_{|\xi| \leq 1/(Cf(t))} \re q(z,\xi)= \infty, \label{conv-eq9}
	\end{equation}
	for all $R \geq 1$ and some constant $C=C(R)>0$. 
	\primolist Claim: the convergence in \eqref{conv-eq9} holds for any $C>0$. \emph{Indeed}: Clearly, it suffices to show that \eqref{conv-eq9} holds with $C$ replaced by $C 2^n$, $n \in \mbb{N}$. Because of the subadditivity of $\xi \mapsto \sqrt{\re q(z,\xi)}$, we have $\re q(z,2\xi) \leq 4 \re q(z,\xi)$ for all $\xi,z \in \mbb{R}^d$ implying
	\begin{equation*}
		\sup_{|\xi| \leq r} \re q(z,\xi)
		\geq \frac{1}{4} \sup_{|\xi| \leq 2r} \re q(z,\xi)
		\geq \ldots
		\geq \frac{1}{4^n} \sup_{|\xi| \leq 2^n r} \re q(z,\xi)
	\end{equation*}
	for all $r>0$. Using this estimate for $r=1/(2^n C f(t))$, we see that \eqref{conv-eq9} holds with $C$ replaced by $C2^n$.
	\secundolist The idea for this part of the proof is from \cite{rs-growth}. For fixed $R \geq 1$, pick $(t_k)_{k \in \mbb{N}} \subseteq (0,1)$ with $t_k \downarrow 0$ and
	\begin{equation*}
		\lim_{k \to \infty} t_k \inf_{|z-x|  \leq R f(t_k)} \sup_{|\xi| \leq 1/(Rf(t_k))} \re q(z,\xi) = \infty.
	\end{equation*}
	Then the maximal inequality \eqref{max-eq4} shows that
	\begin{equation*}
		\mbb{P}^x \left(  \sup_{s \leq t_k} |X_s-x| < R f(t_k) \right) \xrightarrow[]{k \to \infty} 0,
	\end{equation*}
	and so, by Fatou's lemma,
	\begin{align*}
		\mbb{P}^x \left( \limsup_{k \to \infty} \left\{ \sup_{s \leq t_k} |X_s-x| \geq R f(t_k) \right\} \right)
		&\geq \limsup_{k \to \infty} \mbb{P}^x \left( \sup_{s \leq t_k} |X_s-x| \geq R f(t_k) \right) \\
		&= 1- \liminf_{k \to \infty} \mbb{P}^x \left( \sup_{s \leq t_k} |X_s-x|< R f(t_k) \right) \\
		&= 1.
	\end{align*}
	Consequently, there is a measurable set $\Omega_0$ with $\mbb{P}^x(\Omega_0)=1$ such that $\sup_{s \leq t_k} |X_{t_k}(\omega)-x| \geq R f(t_k)$ infinitely often for every $\omega \in \Omega_0$. In particular,
	\begin{equation*}
		\limsup_{k \to \infty} \frac{1}{f(t_k)} \sup_{s \leq t_k} |X_s(\omega)-x| \geq R, \qquad \omega \in \Omega_0.
	\end{equation*}
	\tertiolist Now assume additionally that $f$ is non-decreasing. For $\omega \in \Omega_0$, let $s_k=s_k(\omega) \in [0,t_k]$ be such that
	\begin{equation*}
		|X_{s_k}(\omega)-x| \geq \frac{1}{2} \sup_{s \leq t_k} |X_s(\omega)-x|.
	\end{equation*}
	By the monotonicity, we have $f(s_k) \leq f(t_k)$, and so 
	\begin{equation*}
		\limsup_{t \to 0} \frac{1}{f(t)} |X_t(\omega)-x|
		\geq \limsup_{k \to \infty} \frac{1}{f(s_k)} |X_{s_k}(\omega)-x|
		\geq \frac{1}{2} \limsup_{k \to \infty} \frac{1}{f(t_k)} \sup_{s \leq t_k} |X_s(\omega)-x| 
		\geq \frac{R}{2}.
	\end{equation*}
	As $R \geq 1$ is arbitrary, this proves \ref{main-8-i}.
	\quartolist It remains to prove \ref{main-8-ii}. To this end, we show that if $f$ is regularly varying at zero, i.e.\ 
	\begin{equation*}
		\exists \beta>0 \, \, \forall \lambda>0\::\: \lim_{t \to 0} \frac{f(\lambda t)}{f(t)}=\lambda^{\beta},
	\end{equation*}
	then \eqref{conv-eq9} for $R=1$ implies \eqref{conv-eq9} for all $R \geq 1$. The desired lower bound for the growth of the sample paths then follows from the first part of this proof. Let $C>0$ be such that \eqref{conv-eq9} holds with $R=1$. As we have seen in \primo, it follows that \eqref{conv-eq9} holds with $R=1$ for any $C>0$. Since $f$ is regularly varying at zero, there is $\lambda>0$ such that $f(\lambda t)/f(t) \geq R$ for $t>0$ small. Thus,
	\begin{align*}
		\limsup_{t \to 0} t \inf_{|z-x| \leq  R f(t)} \sup_{|\xi| \leq 1/(Cf(t))} \re q(z,\xi)
		&\geq \limsup_{t \to 0} t \inf_{|z-x| \leq  f(\lambda t)} \sup_{|\xi| \leq 1/(Cf(t))} \re q(z,\xi) \\
		&=  \frac{1}{\lambda} \limsup_{t \to 0} t \inf_{|z-x| \leq  f(t)} \sup_{|\xi| \leq 1/(Cf(t/\lambda))} \re q(z,\xi).
	\end{align*}
	Using once more that $f$ is regularly varying, we find that $f(t/\lambda) \geq \frac{1}{2} \frac{1}{\lambda^{\beta}} f(t)=:\gamma f(t)$ for $t>0$ small. Hence,
	\begin{equation*}
		\limsup_{t \to 0} t \inf_{|z-x| \leq  R f(t)} \sup_{|\xi| \leq 1/(Cf(t))} \re q(z,\xi)
		\geq \frac{1}{\lambda}  \limsup_{t \to 0} t \inf_{|z-x| \leq  f(t)} \sup_{|\xi| \leq 1/(\gamma Cf(t))} \re q(z,\xi)=\infty. \qedhere
	\end{equation*}
	
\end{proof}

The key for the proof of our final main result, Theorem~\ref{main-9}, is the following proposition.

\begin{proposition} \label{conv-3}
	Let $(X_t)_{t \geq 0}$ be a L\'evy-type process with characteristics $(b(x),0,\nu(x,dy))$ and  symbol $q$. Let $f:[0,1] \to [0,\infty)$ be a non-decreasing function. If 
	\begin{equation}
		\limsup_{n \to \infty} \frac{1}{n^2} \sup_{|z-x| \leq 3 f(1/n)} \sup_{|\xi| \leq c/f(1/n)} |q(z,\xi)|<1
		\label{conv-eq11}
	\end{equation}
	and
	\begin{equation}
		\int_0^1 \inf_{|z-x| \leq 5 f(t)} \nu(z,\{|y| >5 f(t)\}) \, dt = \infty,
		\label{conv-eq12}
	\end{equation}
	for some $c>0$ and $x \in \mbb{R}^d$, then
	\begin{equation*}
		\limsup_{t \to 0} \frac{1}{f(t)} |X_t-x| \geq 1 \quad \text{$\mbb{P}^x$-a.s.}
	\end{equation*}
\end{proposition}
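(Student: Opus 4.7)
\medskip

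The plan is to follow the strategy sketched in Remark~\ref{main-10-iv}: construct a sequence of events $A_n$, defined via the increments of $(X_t)_{t\geq 0}$ over the consecutive intervals $[1/(n+1),1/n]$, such that the occurrence of $A_n$ for infinitely many $n$ forces $\limsup_{t\to 0}|X_t-x|/f(t)\geq 1$, and then apply the conditional Borel--Cantelli lemma for backward filtrations (Proposition~\ref{appendix-1}) to the decreasing filtration $\mathcal{H}_n:=\mathcal{F}_{1/n}$. Since the increments of a L\'evy-type process are not independent, the classical Borel--Cantelli is unavailable; the backward conditional version combined with the Markov property takes its place, with the lower bound of Corollary~\ref{max-5} playing the role of the Poisson tail estimate used for L\'evy processes.

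Concretely, set $t_n:=1/n-1/(n+1)$ and define $A_n:=\{\sup_{s\in[1/(n+1),1/n]}|X_s-X_{1/(n+1)}|>2f(1/n)\}$ and $B_n:=\{|X_{1/(n+1)}-x|\leq f(1/(n+1))\}$; then $A_n\in\mathcal{H}_n$ and $B_n\in\mathcal{H}_{n+1}$. By the triangle inequality combined with the monotonicity of $f$, the occurrence of $A_n$ forces $|X_t-x|/f(t)>1$ at some $t\in[1/(n+1),1/n]$, while the occurrence of $B_n^c$ forces $|X_{1/(n+1)}-x|/f(1/(n+1))>1$ outright. Hence both $\{A_n \text{ infinitely often}\}$ and $\{B_n^c \text{ infinitely often}\}$ are contained in $\{\limsup_{t\to 0}|X_t-x|/f(t)\geq 1\}$, and it suffices to show that, on the complementary event $E:=\{B_n\text{ eventually}\}$, the events $A_n$ occur $\mbb{P}^x$-a.s.\ infinitely often.

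By the Markov property, $p_n:=\mbb{P}^x(A_n\mid\mathcal{F}_{1/(n+1)})=\Phi_n(X_{1/(n+1)})$ with $\Phi_n(z):=\mbb{P}^z(\sup_{s\leq t_n}|X_s-z|>2f(1/n))$. Combining Proposition~\ref{max-0} with \eqref{conv-eq11} yields $\Phi_n(z)\leq\kappa<1$ uniformly for large $n$ and $|z-x|\leq f(1/n)$ (the inclusion $\{|w-z|\leq 2f(1/n)\}\subseteq\{|w-x|\leq 3f(1/n)\}$ aligns the spatial ball with the hypothesis, while $t_n\asymp 1/n^2$ absorbs the $1/n^2$ prefactor in \eqref{conv-eq11}). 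Corollary~\ref{max-5} then gives $\Phi_n(z)\geq(1-\kappa)t_n G(z,4f(1/n))$, and a second inclusion $\{|w-z|\leq 4f(1/n)\}\subseteq\{|w-x|\leq 5f(1/n)\}$ together with the monotonicity of $r\mapsto\nu(w,\{|y|>r\})$ gives $G(z,4f(1/n))\geq g(1/n)$, where $g(t):=\inf_{|w-x|\leq 5f(t)}\nu(w,\{|y|>5f(t)\})$. On $E$, the estimate $|X_{1/(n+1)}-x|\leq f(1/(n+1))\leq f(1/n)$ holds eventually, so $p_n\geq(1-\kappa)t_n g(1/n)$ for all sufficiently large $n$. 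Since $f$ is non-decreasing, $g$ is non-increasing, and a standard sum--integral comparison using $t_n\asymp t_{n-1}\asymp 1/n^2$ together with \eqref{conv-eq12} yields $\sum_n t_n g(1/n)=\infty$; hence $\sum_n p_n=\infty$ $\mbb{P}^x$-a.s.\ on $E$. Proposition~\ref{appendix-1} now shows that $A_n$ occurs $\mbb{P}^x$-a.s.\ infinitely often on $E$, which, combined with the case distinction above, yields the claim.

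The main obstacle is the uniform lower bound on $p_n$ in the third paragraph. The upper bound on $\Phi_n(z)$ needed to invoke Corollary~\ref{max-5} rests crucially on the symbol estimate \eqref{conv-eq11}, and the matching of ball radii --- passing from $f(1/n)$ up to $3f(1/n)$ on the symbol side, and from $4f(1/n)$ up to $5f(1/n)$ on the L\'evy-measure side --- must be arranged so as to feed into the hypothesis \eqref{conv-eq12}. A secondary but still delicate point is the divergence of $\sum_n t_n g(1/n)$: the small gap between $g(1/n)$ and $g(1/(n+1))$ in the Riemann-sum comparison with $\int_0^1 g(t)\,dt$ is bridged using the monotonicity of $g$ and the comparability $t_n\asymp t_{n-1}$.
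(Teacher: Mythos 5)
Your proof is correct and follows essentially the same strategy as the paper's: the conditional Borel--Cantelli lemma for backward filtrations, the Markov property to reduce to a one-step conditional probability, Proposition~\ref{max-0} with \eqref{conv-eq11} to make the upper bound small enough to invoke Corollary~\ref{max-5}, and then the ball-radius bookkeeping and sum--integral comparison to feed into \eqref{conv-eq12}. The one structural difference is cosmetic: the paper sets $A_n := \{\sup_{1/(n+1)\leq r<1/n}|X_r-x|\geq f(1/n)\}$ directly, so that when $X_{1/(n+1)}$ already lies outside $B(x,f(1/n))$ the conditional probability $u(X_{1/(n+1)})$ equals $1$ automatically and no case distinction is needed; you instead phrase $A_n$ via increments $|X_s-X_{1/(n+1)}|$, which forces you to introduce the auxiliary event $B_n$ and split on $\{B_n^c\text{ i.o.}\}$ versus $\{B_n\text{ eventually}\}$. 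Both routes land on the same estimates and the same conclusion; the paper's choice of $A_n$ is slightly leaner.
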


\begin{remark} \label{conv-4} 
\begin{enumerate}[wide, labelwidth=!, labelindent=0pt]
	\item\label{conv-4-i} Replacing $f$ by $C\cdot f$ for $C>0$, we obtain immediately a sufficient condition for
	\begin{equation*}
		\limsup_{t \to 0} \frac{1}{f(t)} |X_t-x| \geq C \quad \text{$\mbb{P}^x$-a.s.}
	\end{equation*}
	\item\label{conv-4-ii} By the local boundedness of $q$, there is a finite constant $c=c(R,x)$ such that $|q(z,\xi)| \leq c(1+|\xi|^2)$ for all $\xi \in \mbb{R}^d$ and $|z-x| \leq R$, cf.\ \eqref{def-eq13}. Thus, $\liminf_{t \downarrow 0} f(t)/t = \infty$ is a sufficient condition for \eqref{conv-eq11}; let us mention that this growth condition on $f$ also appears in the study of upper functions for sample paths of L\'evy processes, cf.\ \cite{savov09}. More generally, if $\sup_{|z-x| \leq R} |q(z,\xi)| \leq c (1+|\xi|^{\alpha})$ for some $\alpha \in (0,2]$, then \eqref{conv-eq11} holds for any function $f$ satisfying $\liminf_{t \downarrow 0} f(t)/t^{2/\alpha}= \infty$.
\end{enumerate}
\end{remark}

\begin{proof}[Proof of Proposition~\ref{conv-3}]
	Let $x \in \mbb{R}^d$ and $c>0$ be such that \eqref{conv-eq11} and \eqref{conv-eq12} hold, and set
	\begin{equation*}
		G(x,r) := \inf_{|z-x| \leq r} \nu(z,\{|y| > r\}).
	\end{equation*} 
	Using the subadditivity of $\xi \mapsto \sqrt{|q(z,\xi)|}$, we see that \eqref{conv-eq11} actually holds for \emph{any} $c>0$. \par
	\primolist By the monotonicity of $f$ and $r \mapsto G(x,r)$, it follows from \eqref{conv-eq12} that 
	\begin{equation*}
		\sum_{n =2}^{\infty} \left(\frac{1}{n-1}-\frac{1}{n} \right) G(x,5f(1/n))
		\geq \int_0^1 G(x,5f(t)) \, dt = \infty,
	\end{equation*}
	and so 
	\begin{equation}
		\sum_{n \in \mbb{N}} \frac{1}{n^2} G(x,5f(1/n)) = \infty.
		\label{conv-eq14}
	\end{equation}
	Moreover, we note that \eqref{conv-eq12} implies that $f(t) \to 0$ as $t \downarrow 0$.
	\secundolist Denote by $(\mathcal{F}_t)_{t \geq 0}$ the canonical filtration of $(X_t)_{t \geq 0}$. We claim that
	\begin{equation}
		\sum_{n \in \mbb{N}} \mbb{E}^x(\I_{A_n} \mid \mathcal{F}_{1/(n+1)}) = \infty \quad \text{$\mbb{P}^x$-a.s.} \label{conv-eq16}
	\end{equation}
	for
	\begin{equation*}
		A_n := \bigg\{ \frac{1}{f(1/n)} \sup_{\frac{1}{n+1} \leq r < \frac{1}{n}} |X_r-x| \geq 1 \bigg\}.
	\end{equation*}
	To prove \eqref{conv-eq16} we fix $n \in \mbb{N}$ and note that, by the Markov property,
	\begin{equation*}
		\mbb{E}^x(\I_{A_n} \mid \mathcal{F}_{1/(n+1)}) = u(X_{1/(n+1)})
	\end{equation*}
	where
	\begin{equation*}
		u(z) := \mbb{P}^z \bigg( \sup_{r \leq \frac{1}{n(n+1)}} |X_r-x| \geq f(1/n) \bigg), \qquad z \in \mbb{R}^d.
	\end{equation*}
	We need a lower bound for the mapping $u$. If $z \notin B(x,f(1/n)$, then $|X_0-x| \geq f(1/n)$ $\mbb{P}^z$-a.s. which gives $u(z)=1$. Next we consider the case $z \in B(x,f(1/n))$. By the triangle inequality,
	\begin{equation*}
		u(z) \geq \mbb{P}^z \bigg( \sup_{r \leq \frac{1}{n(n+1)}} |X_r-z| \geq 2f(1/n) \bigg)=:U(z).
	\end{equation*}
	The maximal inequality \eqref{max-eq0} shows that 
	\begin{equation*}
		U(z) \leq c' \frac{1}{n(n+1)} \sup_{|z-y| \leq 2f(1/n)} \sup_{|\xi| \leq 1/(2f(1/n))} |q(y,\xi)|
	\end{equation*}
	for some absolute constant $c'>0$. Since $|z-x| \leq f(1/n)$ and $\sqrt{|q(y,\cdot)|}$ is subadditive, we get 
	\begin{equation*}
		U(z) 
		\leq 4c' \frac{1}{n (n+1)} \sup_{|y-x| \leq 3 f(1/n)} \sup_{|\xi| \leq 1/f(1/n)} |q(y,\xi)|,
	\end{equation*}
	see the proof of Corollary~\ref{up-9}. Thus, by \eqref{conv-eq11}, $U(z) \leq 1-\eps$ for $n \gg 1$ and some $\eps \in (0,1)$. Applying Corollary~\ref{max-5} and using $|z-x| \leq f(1/n)$, we find that
	\begin{equation*}
		u(z) \geq U(z) 
		\geq \eps \frac{1}{n(n+1)} G(z,4f(1/n)) 
		\geq \eps \frac{1}{n(n+1)} G(x,5f(1/n)), \qquad z \in B(x,f(1/n)),
	\end{equation*}
	for $n \gg 1$. In summary,
	\begin{equation*}
		\mbb{E}^x(\I_{A_n} \mid \mathcal{F}_{1/(n+1)})
		\geq  \min \left\{\eps \frac{1}{n(n+1)} G(x,5f(1/n)),1 \right\}
	\end{equation*}
	for $n \gg 1$. Thus, by \eqref{conv-eq14}, $\sum_{n \in \mbb{N}} \mbb{E}^x(\I_{A_n} \mid \mc{F}_{1/(n+1)}) = \infty$ $\mbb{P}^x$-a.s.
	\tertiolist The almost sure divergence of the series implies by the conditional Borel-Cantelli lemma for backward filtrations, cf.\ Proposition~\ref{appendix-1}, that
	\begin{equation*}
		\mbb{P}^x \left( \limsup_{n \to \infty} A_n \right) = 1,
	\end{equation*}
	and so there is a measurable set $\tilde{\Omega}$ with $\mbb{P}^x(\tilde{\Omega})=1$ such that
	\begin{equation*}
		\forall \omega \in \tilde{\Omega} \, \forall n \gg 1 \, \exists t_n=t_n(\omega) \in \left[ \frac{1}{n+1},\frac{1}{n} \right)\::\: \frac{1}{f(1/n)} |X_{t_n}(\omega)-x| \geq 1.
	\end{equation*}
	Using the monotonicity of $f$, we conclude that 
	\begin{equation*}
		\limsup_{t \to 0} \frac{1}{f(t)} |X_t(\omega)-x| \geq \limsup_{n \to \infty} \frac{1}{f(t_n)} |X_{t_n}(\omega)-x| \geq 1, \qquad \omega \in \tilde{\Omega}. \qedhere
	\end{equation*}
\end{proof}

\begin{proof}[Proof of Theorem~\ref{main-9}] 
	First we prove \ref{main-9-i}. Let $f\geq0$ be non-decreasing and $c>0$ such that $\int_0^1 \inf_{|z-x| \leq cf(t)} \nu(z;\{|y|>cf(t)\}) \, dt = \infty$. We consider separately the cases that \ref{C1} resp.\ \ref{C2} holds.
	\primolist Assume that \ref{C1} holds. If for every $R \geq 1$ the limit
	\begin{equation}
		\limsup_{t \to 0} t \inf_{|z-x| \leq R f(t)} \sup_{|\xi| \leq 1/(Cf(t))} \re q(z,\xi) \label{conv-eq20}
	\end{equation}
	is infinite for some constant $C=C(R)$, then Proposition~\ref{main-8} yields
	\begin{equation*}
		\limsup_{t \to 0} \frac{1}{f(t)} |X_t-x| = \infty \quad \text{$\mbb{P}^x$-a.s.}
	\end{equation*}
	On the other hand, if \eqref{conv-eq20} is finite for some $R \geq 1$ and all $C>0$, then
	\begin{align*}
		\limsup_{t \to 0} t^2 \sup_{|z-x| \leq C f(t)} \sup_{|\xi| \leq 1/f(t)} \re q(z,\xi) 
		&\leq c' \limsup_{t \to 0} t \frac{\sup_{|z-x| \leq C f(t)} \sup_{|\xi| \leq 1/f(t)} |q(z,\xi)|}{\inf_{|z-x| \leq R f(t)} \sup_{|\xi| \leq 1/f(t)} |q(z,\xi)|}
	\end{align*}
	for some constant $c'>0$, and the latter limit is zero by \ref{C1}. Hence, \eqref{conv-eq11} holds. Applying Proposition~\ref{conv-3} proves the assertion.
	\secundolist If \ref{C2} holds, then the assertion is immediate from Proposition~\ref{conv-3} and Remark~\ref{conv-4}\ref{conv-4-ii}. \par
	It remains to show \ref{main-9-ii}. To this end, assume additionally that \eqref{A1'} holds for some $R>0$ and let $f$ be non-decreasing with $\int_0^1 \inf_{|z-x| \leq c f(t)} \sup_{|\xi| \leq 1/f(t)} |q(z,\xi)| \, dt = \infty$ for some $c>0$. Then Proposition~\ref{main-7}\ref{main-7-ii} yields $\int_0^1 \inf_{|z-x| \leq cf(t)} \nu(z,\{|y|>c f(t)\}) \, dt = \infty$, and applying \ref{main-9-i} finishes the proof.
\end{proof}

\appendix
\section{}

In the proof of Proposition~\ref{conv-3} we used the following conditional Borel-Cantelli lemma for backward filtrations.

\begin{proposition} \label{appendix-1}
	Let $(\mc{F}_n)_{n \in \mbb{N}}$ be a sequence of decreasing $\sigma$-algebras. Let $(X_n)_{n \in \mbb{N}}$ be a sequence of non-negative random variables such that $X_n$ is $\mathcal{F}_n$-measurable for each $n \in \mbb{N}$. If $Y:=\sup_{n \in \mbb{N}} X_n$ is integrable, then there is a $\mbb{P}$-null set $N$ such that
	\begin{equation*}
		\left\{ \sum_{n \in \mbb{N}} \mbb{E}(X_n \mid \mathcal{F}_{n+1})= \infty \right\} \subseteq \left\{ \sum_{n \in \mbb{N}} X_n = \infty \right\} \cup N.
	\end{equation*}
\end{proposition}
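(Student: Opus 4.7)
The plan is to prove the contrapositive: show that $\{\sum_n X_n < \infty\} \setminus \{\sum_n \mbb{E}(X_n \mid \mathcal{F}_{n+1}) < \infty\}$ is null. The structural observation I will exploit is that the backward tail
\[
	R_n := \sum_{k \geq n} X_k
\]
is $\mathcal{F}_n$-measurable, because each $X_k$ with $k \geq n$ is $\mathcal{F}_k \subseteq \mathcal{F}_n$-measurable. Consequently the event $G_k := \{R_{k+1} \leq C\}$ lies in $\mathcal{F}_{k+1}$ for any constant $C>0$, and this measurability is exactly what will legitimize the pull-out identity $\mbb{E}(\I_{G_k} \mbb{E}(X_k \mid \mathcal{F}_{k+1})) = \mbb{E}(\I_{G_k} X_k)$ used below.

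Fix $C > 0$. The next step is to bound $\sum_k X_k \I_{G_k}$ by an integrable random variable. On $\{\sum_n X_n = \infty\}$ each $R_{k+1}$ equals $\infty$, so the sum vanishes. On $\{\sum_n X_n < \infty\}$ the tail $R_n$ decreases to zero, so $k_0(\omega) := \inf\{k \geq 1 : R_{k+1} \leq C\}$ is a.s.\ finite; since $\I_{G_k}=1$ precisely for $k \geq k_0$,
\[
	\sum_{k} X_k \I_{G_k} = R_{k_0} = X_{k_0} + R_{k_0+1} \leq Y + C.
\]
Here is where the hypothesis $Y = \sup_n X_n \in L^1$ enters: $\mbb{E}\bigl(\sum_k X_k \I_{G_k}\bigr) \leq \mbb{E}(Y) + C < \infty$.

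Pulling $\I_{G_k}$ inside the conditional expectation and applying Tonelli now gives
\[
	\mbb{E} \sum_k \I_{G_k} \mbb{E}(X_k \mid \mathcal{F}_{k+1})
	= \sum_k \mbb{E}(X_k \I_{G_k}) \leq \mbb{E}(Y)+C < \infty,
\]
so there is a $\mbb{P}$-null set $N$ outside of which $\sum_k \I_{G_k} \mbb{E}(X_k \mid \mathcal{F}_{k+1}) < \infty$. For $\omega \in \{\sum_n X_n < \infty\} \setminus N$ the indicator is eventually $1$ (for $k \geq k_0(\omega)$), so $\sum_{k \geq k_0} \mbb{E}(X_k \mid \mathcal{F}_{k+1})$ is finite; adding the a.s.\ finite partial sum $\sum_{k < k_0}\mbb{E}(X_k \mid \mathcal{F}_{k+1})$ yields $\sum_n \mbb{E}(X_n \mid \mathcal{F}_{n+1}) < \infty$, which is the desired inclusion.

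The main obstacle is that the classical forward-filtration proof—which relies on the fact that $M_n := \sum_{k \leq n}(X_k - \mbb{E}(X_k \mid \mathcal{F}_{k-1}))$ is a martingale—breaks down here because $X_k$ is \emph{not} $\mathcal{F}_{k+1}$-measurable, so no analogous martingale is available. The role of the stopping time "first exit of the partial sum above $C$" from the classical argument is instead played by the $\mathcal{F}_{k+1}$-measurable events $\{R_{k+1} \leq C\}$; once one recognises that the backward tails are automatically measurable with respect to the correct $\sigma$-algebras, the rest of the argument is a matter of bookkeeping, with the domination $Y \in L^1$ providing precisely the integrability needed to close the estimate.
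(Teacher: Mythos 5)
Your proof is correct, and it takes a genuinely different route from the paper's, though both rest on the same structural observation: the backward tail $R_n=\sum_{k\geq n}X_k$ is $\mathcal{F}_n$-measurable, so $R_{n+1}$ supplies an $\mathcal{F}_{n+1}$-measurable modifier in front of $\mbb{E}(X_n\mid\mathcal{F}_{n+1})$ which the pull-out property converts into the same modifier in front of $X_n$. The paper (following Chen) weights the series by $1/(1+R_1)^2$, passes to $1/(1+R_{n+1})^2$ via monotonicity of $R_n$, and then telescopes $X_n/(1+R_{n+1})^2\leq(1+Y)\bigl(1/(1+R_{n+1})-1/(1+R_n)\bigr)$ to obtain the integrable bound $1+Y$. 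You instead truncate by the indicator $\I_{\{R_{k+1}\leq C\}}$, and the bound $\sum_k X_k\I_{G_k}=R_{k_0}=X_{k_0}+R_{k_0+1}\leq Y+C$ is immediate, with no rational weights and no telescoping. Your version is a bit more elementary and makes the role of $Y\in L^1$ entirely transparent; the paper's weighted form is marginally more quantitative in that it shows $(1+R_1)^{-2}\sum_n M_n$ is actually integrable rather than just a.s.\ finite, but for the stated inclusion both approaches do the same job. One minor remark: your claim that $R_{k+1}=\infty$ on $\{\sum_n X_n=\infty\}$ implicitly uses that each $X_n<\infty$ a.s.; this is guaranteed by $X_n\leq Y\in L^1$, so there is no gap, but it is worth saying so explicitly.
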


The idea for our proof is from Chen \cite{chen78}.

\begin{proof}
	Set $M_n := \mbb{E}(X_n \mid \mathcal{F}_{n+1})$ and $R_n := \sum_{i=n}^{\infty} X_i$. Since $R_{n+1}$ is $\mathcal{F}_{n+1}$-measurable, we find using the tower property
	\begin{align*}
		\mbb{E} \left( \frac{1}{(1+R_1)^2} \sum_{n=1}^k M_n \right)
		= 	\mbb{E} \left( \I_{\{R_1<\infty\}} \frac{1}{(1+R_1)^2} \sum_{n=1}^k M_n \right)
		&\leq \mbb{E} \left( \sum_{n=1}^k \frac{M_n}{(1+R_{n+1})^2} \I_{\{R_{n+1}<\infty\}} \right) \\
		&= \mbb{E} \left( \sum_{n=1}^k \frac{X_n}{(1+R_{n+1})^2} \I_{\{R_{n+1}< \infty\}} \right)
	\end{align*}
	for all $k \in \mbb{N}$. Since
	\begin{align*}
		\frac{X_n}{(1+R_{n+1})^2}
		 &= \frac{(1+R_n)-(1+R_{n+1})}{(1+R_n)(1+R_{n+1})} \frac{1+R_n}{1+R_{n+1}} 
		 \leq \left( \frac{1}{1+R_{n+1}}-\frac{1}{1+R_n} \right) (1+Y)
	\end{align*}
	on $\{R_{n+1}<\infty\}$, we have
	\begin{align*}
		\sum_{n=1}^k \frac{X_n}{(1+R_{n+1})^2} 
		\leq (1+Y) \left( \frac{1}{1+R_k}-\frac{1}{1+R_1} \right)
		\leq 1+Y,
	\end{align*}
	and so \begin{equation*}
		\mbb{E} \left( \frac{1}{(1+R_1)^2} \sum_{n=1}^k M_n \right)
		\leq \mbb{E}(1+Y)<\infty.
	\end{equation*}
	Letting $k \to \infty$ using monotone convergence, we see that $\frac{1}{(1+R_1)^2} \sum_{n \in \mbb{N}} M_n < \infty$ almost surely, which proves the assertion.
\end{proof}

The following estimate was needed in the proof of Proposition~\ref{up-1}.

\begin{lemma} \label{appendix-2}
\begin{equation*}
	\sum_{n \in \mbb{N}} \frac{1}{n^2} t^{1/n} \log \frac{1}{t} \leq 2 \fa t \in (0,1).
\end{equation*}
\end{lemma}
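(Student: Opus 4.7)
The plan is to substitute $s := \log \frac{1}{t} \in (0,\infty)$, which rewrites $t^{1/n} = e^{-s/n}$ and reduces the claim to proving
\begin{equation*}
	g(s) := \sum_{n=1}^\infty \frac{s}{n^2}\, e^{-s/n} \leq 2 \qquad \text{for all } s>0.
\end{equation*}
Setting $\phi(x) := \tfrac{s}{x^2} e^{-s/x}$, the starting observation is that the change of variable $u = s/x$ gives
\begin{equation*}
	\int_0^\infty \phi(x)\,dx \;=\; \int_0^\infty e^{-u}\,du \;=\; 1,
\end{equation*}
so the natural continuous analogue of $g(s)$ is already of the right order of magnitude; the only thing to absorb is the discretization error.

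Next I would analyze the monotonicity of $\phi$. A direct calculation yields
\begin{equation*}
	\phi'(x) \;=\; \frac{s}{x^3}\, e^{-s/x}\!\left(\frac{s}{x}-2\right),
\end{equation*}
so $\phi$ is unimodal: strictly increasing on $(0, s/2)$ and strictly decreasing on $(s/2,\infty)$, with maximum value $\phi(s/2) = \tfrac{4}{s} e^{-2}$. This is the only structural fact needed.

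I would then split according to the size of $s$. If $s \leq 2$, then $s/2 \leq 1$, hence $\phi$ is monotone decreasing on $[1,\infty)$, and the standard integral-test estimate gives
\begin{equation*}
	g(s) \;\leq\; \phi(1) + \int_1^\infty \phi(x)\,dx \;=\; s e^{-s} + (1 - e^{-s}) \;=\; 1 + (s-1)e^{-s} \;\leq\; 1 + e^{-1} \;<\; 2.
\end{equation*}
If $s>2$, let $N := \lfloor s/2 \rfloor \geq 1$, so that $\phi$ is increasing on $[1,N]$ and decreasing on $[N+1,\infty)$. Applying the integral comparison on each monotone segment gives
\begin{equation*}
	\sum_{n=1}^{N-1} \phi(n) \;\leq\; \int_1^N \phi(x)\,dx, \qquad \sum_{n=N+2}^\infty \phi(n) \;\leq\; \int_{N+1}^\infty \phi(x)\,dx,
\end{equation*}
whose sum is bounded by $\int_0^\infty \phi(x)\,dx = 1$. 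For the two ``peak'' terms, the unimodality gives $\phi(N)+\phi(N+1) \leq 2\phi(s/2) = 8/(se^2) < 4/e^2$ since $s>2$. Combining yields $g(s) \leq 1 + 4/e^2 < 2$, finishing the argument.

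The only mildly delicate point is the pair of terms $\phi(N)$, $\phi(N+1)$ that straddle the peak and cannot be absorbed into the integral; they are handled by the explicit upper bound $\phi(s/2) = 4/(se^2)$, which is small precisely in the regime $s>2$ where this case occurs. Everything else is a routine integral test.
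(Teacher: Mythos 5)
Your proof is correct. Every step checks out: the substitution $s=\log(1/t)$, the computation $\int_0^\infty\phi=1$, the unimodality of $\phi$ with peak at $x=s/2$ of height $4/(se^2)$, the integral-test bound $g(s)\le 1+(s-1)e^{-s}\le 1+e^{-1}<2$ for $s\le 2$, and the split around $N=\lfloor s/2\rfloor$ for $s>2$, where the two straddling terms contribute at most $8/(se^2)<4/e^2$ and the rest is absorbed into $\int_0^\infty\phi=1$. (Your bound $1+e^{-1}$ in the first case is slightly loose — one actually has $(s-1)e^{-s}\le e^{-2}$ — but this is harmless.)

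Your route is genuinely different from the paper's. The paper avoids any substitution and any case split: it observes, via the fundamental theorem of calculus and the monotonicity of $r\mapsto t^r$, that $t^{1/(n+1)}-t^{1/n}\ge \frac{1}{n(n+1)}t^{1/n}\log(1/t)$, so the sum with weights $1/(n(n+1))$ telescopes to $\lim_N t^{1/N}-t=1-t\le 1$; the factor $\frac{1}{n^2}\le\frac{2}{n(n+1)}$ then yields the constant $2$ in one line. That argument is shorter and requires no unimodality analysis or Euler--Maclaurin-style bookkeeping around the peak of $\phi$. Your approach is more systematic — it is the ``natural'' integral-comparison argument once one recognizes $\sum_n \phi(n)$ as a Riemann-sum-like object with $\int_0^\infty\phi=1$ — and it makes transparent why the constant $2$ has slack. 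The telescoping trick is slicker but more ad hoc; the integral test is longer but would adapt more readily to variants (e.g.\ replacing $1/n^2$ by other weights). Both are valid and elementary.
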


\begin{proof}
	Fix $t \in (0,1)$. By the fundamental theorem of calculus, we have for every $n \in \mbb{N}$
	\begin{align*}
		0 
		\leq t^{1/(n+1)}-t^{1/n}
		= - \int_{1/(n+1)}^{1/n} t^r \log t \, dr
		\geq \log(t^{-1}) t^{1/n} \left( \frac{1}{n}-\frac{1}{n+1} \right) 
		= \log(t^{-1}) t^{1/n} \frac{1}{n(n+1)}.
	\end{align*}
	Thus,
	\begin{align*}
		\sum_{n \in \mbb{N}} \frac{1}{n(n+1)} t^{1/n} \log(t^{-1})
		\leq \sum_{n \in \mbb{N}} \left( t^{1/(n+1)}-t^{1/n} \right)
		= \lim_{N \to \infty} t^{1/N} - t
		=1-t.
	\end{align*}
	As $(n+1)/n \leq 2$ for all $n \in \mbb{N}$, this proves the assertion.
\end{proof}

\emph{Acknowledgment}: I'm very grateful to Ren\'e Schilling for helpful discussions and comments.

\end{document}